\theoremstyle{definition}
\newtheorem{definition}{Definition}[section]
\theoremstyle{plain}
\newtheorem{theorem}[definition]{Theorem}
\newtheorem{proposition}[definition]{Proposition}
\newtheorem{lemma}[definition]{Lemma}
\newtheorem{corollary}[definition]{Corollary}
\theoremstyle{remark}
\newtheorem{remark}[definition]{Remark}
\numberwithin{equation}{section}
\newcommand{\C}{\mathbb{C}}
\newcommand\keywords[1]{\textbf{keywords}:#1}
\begin{document}
\date{}
\title{RTT presentation of coideal subalgebra of quantized enveloping algebra of type CI}
\author{Yingwen Zhang${}^1$  Hongda Lin${}^2$ and Honglian Zhang${}^{1}$\thanks{Corresponding Author. Email:\ hlzhangmath@shu.edu.cn}~~}
\maketitle

\begin{center}
\footnotesize
\begin{itemize}
\item[1] Department of Mathematics, Shanghai University, Shanghai 200444, China.
\item[2] Shenzhen International Center for Mathematics, Southern University of Science and Technology, China.
\end{itemize}
\end{center}

\begin{abstract}
	The pair consisting of a quantum group and its corresponding coideal subalgebra, known as a quantum symmetric pair, was developed independently by M. Noumi and G. Letzter through different approaches. The purpose of this paper is threefold. First, for symmetric pairs $(\mathfrak{sp}_{2n},\mathfrak{gl}_n)$, we construct a coideal subalgebra $U_q^{tw}(\mathfrak{gl}_n)$ of the quantized enveloping algebra of type CI using the $R$-matrix presentation, based on the work of Noumi. Second, we derive a Poincar\'e-Birkhoff-Witt(PBW) basis for $U_q^{tw}(\mathfrak{gl}_n)$ by  the $\mathbb{A}$-form approach. As a consequence of the isomorphism btween $U_q^{tw}(\mathfrak{gl}_n)$ and the $\imath$quantum group $\mathcal{U}^{\imath}$, our method also yields the PBW basis for the $\imath$quantum group of type CI. Finally, as an application of the $R$-matrix presentation, we construct a Poisson algebra $\mathcal{P}_n$ associated with $U_q^{tw}(\mathfrak{gl}_n)$, and explicitly describe the action of the braid group $\mathcal{B}_n$ on the elements of $\mathcal{P}_n$.
\end{abstract}

\keywords  {\ Quantized enveloping algebra $\cdot $ Coideal subalgebra $\cdot$ Quantum symmetric pair $\cdot $ Poisson algebra}

\section{Introduction}
Quantum groups, introduced to solve integrable models and construct solutions to the Yang-Baxster equation, were independently established as Hopf algebras by V. G. Drinfeld \cite{V.G.} and M. Jimbo \cite{M.J.} around 1985. In many instances, quantum groups $U_q(\mathfrak{g})$ are viewed as deformations of universal enveloping algebras of Lie algebras $\mathfrak{g}$. 
An alternative definition of \( U_q(\mathfrak{g}) \) is given by the RTT presentation, introduced by L. D. Faddeev, N. Y. Reshetikhin, and L. A. Takhtajan \cite{FR}.
The theory of quantum groups has since found widespread application across numerous areas of mathematics and mathematical physics, including solvable lattice models, representation theory, and topology invariants, among others.

It is well-known that for a subalgebra $\mathfrak{t}$ of Lie algebra $\mathfrak{g}$, $U(\mathfrak{t})$ is a Hopf subalgebra of $U(\mathfrak{g})$. However, the $q$-analog $U_q(\mathfrak{t})$ is not necessarily isomorphic to a Hopf subalgebra of  $U_q(\mathfrak{g})$. To address this issue, in the 1990s, M. Noumi and T. Sugitani introduced the twisted quantized enveloping algebra $U_{q}^{tw}(\mathfrak{t})$  as coideal subalgebras of quantized universal enveloping algebras for all classical symmetric pairs $(\mathfrak{g}, \mathfrak{t})$ in \cite{N2}. Their construction is mainly achieved through the explicit solutions of the reflection equation, along with the RTT presentation of the quantized enveloping algebra.   In 1995,  M. Noumi \cite{N1} further studied  the twisted quantized enveloping algebras $U_q^{tw}(\mathfrak{o}_n)$ and $U_q^{tw}(\mathfrak{sp}_{2n})$ corresponding to symmetric pairs of type A. In 2003, A. Molev et al. \cite{M1} provided the defining relations of specific generators and the PBW basis for $U_q^{tw}(\mathfrak{o}_n)$ and $U_q^{tw}(\mathfrak{sp}_{2n})$.  The PBW basis for $U_q^{tw}(\mathfrak{sp}_{2n})$ is also provided in references \cite{M2,LK,LWZ24}.  Moreover, A. Molev et al. extended the twisted $q$-Yangian $\mathrm{Y}_q^{tw}(\mathfrak{o}_n)$ and $\mathrm{Y}_q^{tw}(\mathfrak{sp}_{2n})$ as coideal subalgebras of quantum affine algebra $U_q(\hat{\mathfrak{gl}}_n)$ in \cite{M1}. Additionally,  A. Molev and L. Gow classified the finite-dimensional irreducible representations of $U_q^{tw}(\mathfrak{sp}_{2n})$ and $\mathrm{Y}_q^{tw}(\mathfrak{sp}_{2n})$ of type $\mathrm{A\MakeUppercase{\romannumeral 2}}$, in terms of their highest weights in \cite{M2,M3}. 

However, the development of the BCD type remains insufficient, as the structure of the BCD type is more complex than that of the A type. In this paper, we define the twisted quantized enveloping algebra $U_q^{tw}(\mathfrak{gl}_n)$ of type CI, using the RTT-type relation and reflection equation. The crucial step in defining the coideal subalgebra is deriving the corresponding additional relation. Current research mainly focuses on the twisted Yangian, with significant contributions from \cite{GV,GRW,GRW2}. Due to the distinct properties of Yangian, the center discussed in \cite{GV} cannot be generalized to the context of the present work.

It is worth noting that in a series of papers by G. Letzter\cite{L1,L2,L3}, the unified theory of quantum symmetric pairs $(\mathcal{U},\mathcal{U}^\imath)=(\mathcal{U}_q(\mathfrak{g}), \mathcal{U}^\imath(\mathfrak{g},\mathfrak{t}))$ was established using the Drinfeld-Jimbo presentation of quantized enveloping algebra. The algebra $\mathcal{U}^{\imath}$ is the maximal right coideal subalgebra of $\mathcal{U}$, and it is now referred to as the $\imath$quantum group. In \cite{K14}, S. Kolb further generalized Letzter's theory to the Kac-Moody case. In recent years, the theory has received great attention, with substantial contributions such as \cite{W1,BKW18,CLW21,W2,FL15} etc.

In fact, there exists a close connection between the two constructions of M. Noumi and G. Letzter.  The coideal property ensures that the construction of the maximal one-sided coideal is essentially unique. In Section 6 of\cite{L1}, G. Letzter discussed that how $\mathcal{U}^{\imath}$ contains the coideal subalgebra obtained from the reflection equation, as constructed by M. Noumi. S. Kolb \cite{K08} demonstrated that the central elements in G. Letzter’s quantum group analogues of symmetric pairs lead to solutions of the reflection equation. In particular, matrix solutions provide one-dimensional representations
(or characters) of these algebras \cite{KS09}. More  recently, K. Lu established a direct and explicit isomorphism between the twisted $q$-Yangians in the $R$-matrix presentation and affine $\imath$quantum groups in the current presentation, associated with the symmetric pair of type AI in \cite{LK}. 

The symmetries of twisted quantized algebra and their associated Poisson algebra have been the subject of extensive study by numerous authors. As $q \to 1$, $U_q^{tw}(\mathfrak{o}_3)$ equipped with a Poisson bracket becomes a Poisson algebra,  as demonstrated in  \cite{NRZ} by J.E. Nelson, T. Regge and F. Zertuche.  In the context of stock matrices, $U_q^{tw}(\mathfrak{o}_n)$ leads to a Poisson algebra, with a detailed Poisson bracket provided in \cite{NR}.  The automorphisms of both the algebra $U_q^{tw}(\mathfrak{o}_6)$ and the Poisson bracket in the associated Poisson algebra are described with explicit group relations in \cite{NR,NR93}. An action of the braid group $\mathcal{B}_n$ on the Poisson algebra was introduced by B. Dubrovin \cite{BD} and A. I. Bondal\cite{AIB}. Moreover, in \cite{M08} A. Molev and E. Ragoucy presented a quantized action of  $\mathcal{B}_n$ on the twisted quantized enveloping algebra $U_q^{tw}(\mathfrak{o}_n)$, and constructed a Poisson algebra associated with $U_q^{tw}(\mathfrak{sp}_{2n})$. In addition, they derived explicit formulas for the Poisson bracket in terms of the reflection equations.   
 
In the current paper, we construct a coideal subalgebra $U_q^{tw}(\mathfrak{gl}_n)$ of the quantized enveloping algebra $U_q(\mathfrak{sp}_{2n})$ with reflection equation, corresponding to the symmetric pair:
$$\mathrm{C\MakeUppercase{\romannumeral 1}}:\ (\mathfrak{sp}_{2n}, \mathfrak{gl}_n).$$ Additionally, we provide the defining relations for specific generators. Furthermore,  we establish the Poincaré-Birkhoff-Witt (PBW) theorem for $U_q^{tw}(\mathfrak{gl}_n)$ using the method stated in \cite{M1}, as presented in Theorem \ref{UPbw}. However, the proof of the PBW theorem is more intricate due to the presence of central relations, as discussed in Section 4.
Moreover, we describe an isomorphism between the twisted quantized enveloping algebra $U^{tw}_q({\mathfrak{gl}_n})$ and the corresponding $\imath$quantum group $\mathcal{U}^{\imath}$ with generators $B_i =f_i-k_i^{-1}e_i$, for all $i\leq n$, as defined in \cite{KP11}, and present it in Theorem \ref{sisoui}. As a result, we also derive the PBW basis for the $\imath$quantum group of type C for the isomorphism between $U_q^{tw}(\mathfrak{gl}_{2n})$ and $\mathcal{U}^{\imath}$. 
In fact, Y. Xu in \cite{XY14} gave the PBW theorem for the $\imath$quantum group with generators $B_i$ of each type. The PBW basis of the $\imath$quantum group presented in the current paper through the isomorphism, differs slightly from the one in \cite{XY14}.

Furthermore, we construct a Poisson algebra $\mathcal{P}_n$ associated with the twisted quantized enveloping algebra $U_q^{tw}(\mathfrak{gl}_{n})$ by taking the limit $q \to 1$. Using the reflection relations, we derive an explicit formula for the Poisson bracket on the corresponding matrix space. 
Braid group action is a fundamental construction for usual quantum groups. Due to the isomorphism between $U^{tw}_q({\mathfrak{gl}_n})$ and the $\imath$quantum group $\mathcal{U}^{\imath}$ of type CI, we present the action of the braid group $\mathcal{B}_n$ on $U_q^{tw}(\mathfrak{gl}_{n})$ referring to \cite{KP11}. In addition, we explicitly describe the action of $\mathcal{B}_n$ on the elements of Poisson algebra $\mathcal{P}_n$, which preserves the Poisson bracket.

The paper is organized as follows. In Section 2, we recall the Drinfeld-Jimbo and RTT presentation of the quantized universal enveloping algebra $U_q(\mathfrak{sp}_{2n})$, along with the isomorphism between these two presentations. In Section 3, we first review the definition and properties of the symmetric pair $(\mathfrak{sp}_{2n},\mathfrak{gl}_n$) and define the twisted quantized universal enveloping algebra $U^{tw}_q({\mathfrak{gl}_n})$ as coideal subalgebra of $U_q({\mathfrak{sp}_{2n}})$. Moreover, we provide an equivalent presentation of $U^{tw}_q({\mathfrak{gl}_n})$. Section 4 is devoted to proving the PBW theorem of $U^{tw}_q({\mathfrak{gl}_n})$. In Section 5, we establish an isomorphism between the twisted quantized universal enveloping algebra $U^{tw}_q({\mathfrak{gl}_n})$ and the corresponding $\imath$quantum group $\mathcal{U}^{\imath}$,  and provide a PBW basis for the $\imath$quantum group as an application. Finally, in Section 6, we construct a new Poisson algebra associated with $U^{tw}_q({\mathfrak{gl}_n})$, and explicitly present the action of the braid group $\mathcal{B}_n$ on the elements of $\mathcal{P}_n$.

Throughout this paper, all spaces and algebras are over the complex set $\C$. We denote by $\delta_*$ which equals 1 if the condition $*$ is satisfied and 0 otherwise.

\section{The quantized enveloping algebra for type $\mathfrak{sp}_{2n}$}
 We begin by reviewing the structure of the quantum algebra $\mathcal{U}_q(\mathfrak{g})$ introduced by V. G. Drinfeld \cite{V.G.} and M. Jimbo \cite{M.J.}.  In this section, we focus on
the symplectic Lie algebra $\mathfrak{sp}_{2n}$, a classical simple Lie algebra with Cartan matrix $A=(c_{i\,j})$. Fix an ordering set $\pi=
\{\alpha_1,\alpha_2,\cdots,\alpha_n\}$ of the simple roots, where 
$$\alpha_i=\lambda_i-\lambda_{i+1},\ \ \text{for} \ i=1,\cdots,n-1,\quad \text{and} \ \alpha_n=2\lambda_n,$$
$\lambda_1,\cdots\lambda_n$ is an orthonormal basis of a Euclidean space with bilinear product of $(\cdot, \cdot )$, and 
\begin{equation*}
	c_{i\,j}=\frac{2(\alpha_i,\alpha_j)}{(\alpha_i,\alpha_i)}.
\end{equation*}

Let $q$ be a formal variable which is nonzero and not a root of unity. Set $q_i=q^{d_i}$ for $i=1,\cdots,n$, where $d_i=(\alpha_i,\alpha_i)/2$. That is $q_i=q$ for $i<n$ and $q_n=q^2$.
The Drinfeld-Jimbo presentation $\mathcal{U}_q({\mathfrak{sp}_{2n}})$ of the quantized universal enveloping algebra for type ${\mathfrak{sp}_{2n}}$ is an associative algebra over $\C(q)$, generated by $e_i,f_i,k_i,k_i^{-1}$, $1\leq i \leq n$, subject to the defining relations
\begin{equation*}
	\begin{aligned}
		k_ik_j&=k_jk_i,\quad k_ik_i^{-1}=k_i^{-1}k_i=1,\\
		k_ie_jk_i^{-1}&=q_i^{c_{i\,j}}e_j,\quad k_if_jk_i^{-1}=q_i^{-c_{i\,j}}f_j,\\
		e_if_j&-f_je_i=\delta_{i\,j}\frac{k_i-k_i^{-1}}{q_i-q_i^{-1}},\\
		\sum_{k=0}^{1-c_{i\,j}}(-1)^k& \begin{bmatrix} 1-c_{i\,j} \\ k \end{bmatrix}_{q_i}e_i^{1-c_{i,j}-k}e_je_i^k=0,\quad i\neq j,\\
		\sum_{k=0}^{1-c_{i,j}}(-1)^k& \begin{bmatrix} 1-c_{i\,j} \\ k \end{bmatrix}_{q_i}f_i^{1-c_{i\,j}-k}f_jf_i^k=0,\quad i\neq j,\\
	\end{aligned}	
\end{equation*}
where 
$$[k]_{q_i}=\frac{q_i^n-q_i^{-n}}{q_i-q_i^{-1}},\qquad [k]_{q_i}!=\prod_{s=1}^k[s]_{q_i},$$
$$\begin{bmatrix} k \\ r \end{bmatrix}_{q_i}=\frac{[k]_{q_i}!}{[r]_{q_i}![k-r]_{q_i}!}.$$
The algebra $\mathcal{U}_q({\mathfrak{sp}_{2n}})$ is equipped with a Hopf algebra structure referred in \cite{NL90}, where the comultiplication $\Delta$, counit $\varepsilon$ and antipode $\mathfrak{S}$ are defined as follows:
\begin{equation*}
	\begin{aligned}
		&\Delta(k_i)=k_i\otimes k_i,\ \Delta(k_i^{-1})=k_i^{-1}\otimes k_i^{-1},\ \varepsilon(k_i)=1,\ \mathfrak{S}(k_i)=k_i^{-1},\\
		&\Delta(e_i)=e_i\otimes k_i+1\otimes e_i,\quad \varepsilon(e_i)=0,\quad \mathfrak{S}(e_i)=-e_ik_i^{-1},\\
		&\Delta(f_i)=f_i\otimes1+k_i^{-1}\otimes f_i,  \quad \varepsilon(f_i)=0, \quad \mathfrak{S}(f_i)=-k_if_i.\\
	\end{aligned}	
\end{equation*}

Recall the $R$-matrix is an element of the endomorphism algebra End$\C^{2n}\otimes$ End$\C^{2n}$, given by
\begin{equation}\label{R}
	R=\sum_{i,j}q^{\delta_{i\,j}-\delta_{i\,j'}}e_{i\,i}\otimes e_{j\,j}+(q-q^{-1})\sum_{i<j}e_{i\,j}\otimes e_{j\,i}
	-(q-q^{-1})\sum_{i<j}q^{\overline{j}-\overline{i}}\epsilon_i\epsilon_je_{i\,j}\otimes e_{i'\,j'},
\end{equation}
where $e_{i\,j}\in$End$\C^{2n}$ denote the elementary matrices, $i'=2n+1-i$ and
$$\epsilon_i=\left\{
\begin{aligned}
	1 &    & i\leq n\\
	-1&    & i > n
\end{aligned} \right. ,$$
$$(\overline{1},\cdots,\overline{n},\overline{n+1},\cdots,\overline{2n})=(n,\cdots,1,-1,\cdots,-n).$$
Indeed, the $R$-matrix is a solution to the following Yang-Baxter equation
\begin{equation*}
	R_{12}R_{13}R_{23}=R_{23}R_{13}R_{12}
\end{equation*}
on End$\C^{2n}\otimes$End$\C^{2n}\otimes$End$\C^{2n}$, and the subindices indicate the copies of End$\C^{2n}$, e.g. $R_{12}=R\otimes 1$, etc.
\begin{definition} (\cite{NL90})
	The extented quantized enveloping algebra $U_q^{ext}(\mathfrak{sp}_{2n})$ is an associative algebra over $\C(q)$, generated by $t_{i\,j}$, $\overline{t}_{i\,j}$ for $1\leq i,j\leq 2n$. The defining relations of generators are given by 
	\begin{equation}\label{Rtt}
		\begin{aligned}
			&t_{i\,j}=\overline{t}_{j\,i}=0, \ \quad\quad 1\leq i<j\leq 2n,\\
			&t_{i\,i}\overline{t}_{i\,i}=\overline{t}_{i\,i}t_{i\,i}=1,\ \ \ 1\leq i\leq 2n,\\
			&RT_1T_2=T_2T_1R,\qquad R\overline{T}_1\overline{T}_2=\overline{T}_2\overline{T}_1R,\qquad R\overline{T}_1T_2=T_2\overline{T}_1R,\\
		\end{aligned}	
	\end{equation}
	where 
	\begin{equation*}
		T=\sum_{i\geq j}e_{i\,j}\otimes t_{i\,j}, \quad \overline{T}=\sum_{i\leq j}e_{i\,j}\otimes \overline{t}_{i\,j}.
	\end{equation*}
\end{definition}
The $2n\times 2n$ matrices $T$ and $\overline{T}$ are regarded as elements of the algebra End $\C^{2n} \otimes U_q^{ext}(\mathfrak{sp}_{2n})$ corresponding to the $L$-operators $L^-$ and $L^+$ respectively, as discussed in the \cite{NL90}. They act as root vectors of $\mathfrak{sp}_{2n}$. We denote by $T_a$ and $\overline{T}_a$ the a-th copies of End $\C^{2n}$ in the tensor product: $T_a=1^{\otimes(a-1)}\otimes e_{i\,j}\otimes 1^{\otimes(2-a)} \otimes  t_{i\,j}$, for $a=1,2$, with $\overline{T}_a$ defined similarly. 
Therefore, the specific relation from the RTT relation (\ref{Rtt}) between $t_{i\,j}$ can be rewritten as 
\begin{equation}\label{rtt1}
	\begin{aligned}
		q^{\delta_{i\,j}-\delta_{i\,j'}}t_{i\,a}t_{j\,b}-q^{\delta_{a\,b}-\delta_{a\,b'}}t_{j\,b}t_{i\,a}
		=&(q-q^{-1})(\delta_{b<a}t_{j\,a}t_{i\,b}-\delta_{i<j}t_{j\,a}t_{i\,b})\\
		&+\delta_{i\,j'}(q-q^{-1})\sum_{k=i+1}q^{\overline{k}-\overline{i}}\epsilon_i\epsilon_kt_{k\,a}t_{k'\,b}\\
		&-\delta_{a\,b'}(q-q^{-1})\sum_{k=1}^{a-1}q^{\overline{a}-\overline{k}}\epsilon_k\epsilon_at_{j\,k'}t_{i\,k}.\\
	\end{aligned}	
\end{equation}
The  definition relation for $\overline{t}_{i\,j}$ is obtained by replacing $t_{i\,j}$ with $\overline{t}_{i\,j}$ in equation (\ref{rtt1}). Moreover, we have the relation between $\overline{t}_{i\,j}$ and $t_{i\,j}$, given by
\begin{equation}\label{rtt2}
	\begin{aligned}
		q^{\delta_{i\,j}-\delta_{i\,j'}}\overline{t}_{i\,a}t_{j\,b}-q^{\delta_{a\,b}-\delta_{a\,b'}}t_{j\,b}\overline{t}_{i\,a}
		=&(q-q^{-1})(\delta_{b<a}t_{j\,a}\overline{t}_{i\,b}-\delta_{i<j}\overline{t}_{j\,a}t_{i\,b})\\
		&+\delta_{i\,j'}(q-q^{-1})\sum_{k=i+1}q^{\overline{k}-\overline{i}}\epsilon_i\epsilon_k\overline{t}_{k\,a}t_{k'\,b}\\
		&-\delta_{a\,b'}(q-q^{-1})\sum_{k=1}^{a-1}q^{\overline{a}-\overline{k}}\epsilon_k\epsilon_at_{j\,k'}\overline{t}_{i\,k}.\\
	\end{aligned}	
\end{equation}
In this paper, we use $U_q(\mathfrak{sp}_{2n})$ to denote the RTT presentation of the quantized universal enveloping algebra of type ${\mathfrak{sp}_{2n}}$, as defined \cite{NL90}.

\begin{definition} (\cite{NL90}) \label{t}
	The RTT presentation $U_q(\mathfrak{sp}_{2n})$ of the quantized universal enveloping algebra of type ${\mathfrak{sp}_{2n}}$  is defined as the quotient algebra of $U_q^{ext}(\mathfrak{sp}_{2n})$ by the relations 
	\begin{equation}\label{cent}
		\begin{aligned}
			&TDT^tD^{-1}=\overline{T}D\overline{T}^tD^{-1}=I,\\
			&DT^tD^{-1}T=D\overline{T}^tD^{-1}\overline{T}=I,\\
		\end{aligned}	
	\end{equation}
	where $t$ denotes the matrix transposition with $e_{i\,j}^t=\epsilon_{i}\epsilon_je_{j'\,i'}$ and $D$ is the diagonal matrix
	\begin{equation*}
	D=\text{diag}(q^n,\cdots,q,q^{-1},\cdots,q^{-n}).
	\end{equation*}
	$I$ is the identity matrix.
\end{definition}
\begin{remark}
Compared with the quantized enveloping algebras of type $\operatorname{A}$, those of type $\operatorname{BCD}$ involve more intricate central relations, which presented significant challenges in proving the PBW theorem of their coideal subalgebra.
\end{remark}
The algebra $U_q({\mathfrak{sp}_{2n}})$ is equipped with a Hopf algebra structure, where the comultiplication $\Delta'$, counit $\varepsilon'$ are defined as follows,
\begin{equation*}
	\begin{aligned}
	\Delta'(t_{i\,j})&=\sum_{k=1}^{2n}t_{i\,k}\otimes t_{k\,j},\quad \Delta'(\overline{t}_{i\,j})=\sum_{k=1}^{2n}\overline{t}_{i\,k}\otimes \overline{t}_{k\,j},\quad \varepsilon'(t_{i\,j})=\varepsilon'(\overline{t}_{i\,j})=\delta_{i\,j},
	\end{aligned}	
\end{equation*}

\begin{proposition} ([\citenum{NL90},Theorem 12.])\label{iso}
	There exists a Hopf algebraic isomorphism ${\phi}$ from  $U_q(\mathfrak{sp}_{2n})$ to  $\mathcal{U}_q(\mathfrak{sp}_{2n})$  such that 
	$$\phi: \quad t_{i\,i}\mapsto t_i,\qquad \overline{t}_{i\, i+1}\mapsto -(q_i-q_i^{-1})t_i^{-1}e_i,\qquad t_{i+1\, i}\mapsto (q_i-q_i^{-1})f_it_i,$$
	where $k_i=t_it_{i+1}^{-1}$, for $1\leq i\leq n$.
\end{proposition}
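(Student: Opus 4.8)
The plan is to exhibit $\phi$ together with an explicit inverse and to check that both are algebra homomorphisms compatible with the coalgebra structures. Since the target $\mathcal{U}_q(\mathfrak{sp}_{2n})$ is presented by the Drinfeld--Jimbo generators $e_i,f_i,k_i^{\pm 1}$, whereas the source $U_q(\mathfrak{sp}_{2n})$ is a quotient of an algebra generated by all the matrix entries $t_{i\,j}$ and $\overline{t}_{i\,j}$, the first task is to reduce to the near-diagonal entries. Using \eqref{rtt1} and \eqref{rtt2} one checks that every $t_{i\,j}$ with $i-j\geq 2$ (resp.\ every $\overline{t}_{i\,j}$ with $j-i\geq 2$) can be expressed as a $q$-commutator of entries closer to the diagonal; iterating, $U_q(\mathfrak{sp}_{2n})$ is generated by $t_{i\,i},\overline{t}_{i\,i}$ together with the sub/super-diagonal entries $t_{i+1\,i},\overline{t}_{i\,i+1}$. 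Hence it suffices to define $\phi$ on these generators, as in the statement, and to define the candidate inverse $\psi\colon\mathcal{U}_q(\mathfrak{sp}_{2n})\to U_q(\mathfrak{sp}_{2n})$ by the inverted formulas
\begin{equation*}
\psi(k_i)=t_{i\,i}\,t_{i+1\,i+1}^{-1},\qquad
\psi(e_i)=-(q_i-q_i^{-1})^{-1}\,t_{i\,i}\,\overline{t}_{i\,i+1},\qquad
\psi(f_i)=(q_i-q_i^{-1})^{-1}\,t_{i+1\,i}\,t_{i\,i}^{-1}.
\end{equation*}

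Second, I would prove that $\psi$ is a well-defined algebra homomorphism by verifying the Drinfeld--Jimbo relations for the images. The Cartan relations $\psi(k_i)\psi(k_j)=\psi(k_j)\psi(k_i)$ and the weight relations $\psi(k_i)\psi(e_j)\psi(k_i)^{-1}=q_i^{c_{i\,j}}\psi(e_j)$ follow directly from the diagonal cases of \eqref{rtt1}--\eqref{rtt2}, since the $t_{i\,i}$ mutually commute and act diagonally on the near-diagonal entries. The relation $\psi(e_i)\psi(f_j)-\psi(f_j)\psi(e_i)=\delta_{i\,j}(\psi(k_i)-\psi(k_i)^{-1})/(q_i-q_i^{-1})$ is obtained by specialising \eqref{rtt2} to the relevant indices and using the unitary constraints $t_{i\,i}\overline{t}_{i\,i}=1$. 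The genuinely laborious point is the $q$-Serre relations: one must expand the degree-$(1-c_{i\,j})$ expressions in the $\overline{t}_{i\,i+1}$ (resp.\ $t_{i+1\,i}$) and collapse them using the quadratic relations \eqref{rtt1}, treating separately the short root $\alpha_n$ (where $q_n=q^2$) and the long roots.

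Third, to finish I would check that $\phi$ is itself a homomorphism and that $\phi,\psi$ are mutually inverse. Well-definedness of $\phi$ amounts to confirming that the images of $t_{i\,i},\overline{t}_{i\,i+1},t_{i+1\,i}$ satisfy the near-diagonal instances of \eqref{rtt1}, \eqref{rtt2} and, crucially, the central relations \eqref{cent}; the latter encode the symplectic form through the transposition $e_{i\,j}^t=\epsilon_i\epsilon_j e_{j'\,i'}$ and the matrix $D$, and they translate into identities among the $t_{i\,i}$ and $k_i$ that pin down the diagonal entries $t_{i\,i}$ with $i>n$ in terms of those with $i\leq n$. Comparing $\psi\circ\phi$ and $\phi\circ\psi$ on generators then yields the identity. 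Finally, compatibility with the Hopf structure is checked on generators: one matches $\Delta'(t_{i\,j})=\sum_k t_{i\,k}\otimes t_{k\,j}$ against $\Delta(e_i)=e_i\otimes k_i+1\otimes e_i$ and $\Delta(f_i)=f_i\otimes 1+k_i^{-1}\otimes f_i$, and likewise for $\varepsilon',\varepsilon$ and the antipodes.

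The main obstacle is twofold. On the computational side it is the verification of the $q$-Serre relations for $\psi(e_i),\psi(f_i)$, which requires careful bookkeeping of the correction terms appearing in \eqref{rtt1}--\eqref{rtt2}. On the structural side it is the injectivity of $\psi$ (equivalently the surjectivity of $\phi$): once $\psi$ is known to be a surjective homomorphism --- its image contains all near-diagonal entries, hence all generators --- I would upgrade it to an isomorphism by a PBW/dimension argument comparing ordered monomials on the two sides, or by passing to the classical limit $q\to 1$, where both algebras degenerate to $U(\mathfrak{sp}_{2n})$ and the induced map is manifestly the identity.
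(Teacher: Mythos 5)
The first thing to note is that the paper contains no proof of Proposition \ref{iso}: it is imported verbatim from [\citenum{NL90}, Theorem 12], so your proposal can only be measured against the standard argument, not against anything in the text. Your architecture --- define $\phi$ on a generating set, define a candidate inverse $\psi$ by the (correctly) inverted formulas, verify the defining relations on both sides, and reduce the far-from-diagonal entries to near-diagonal ones via $q$-commutators --- is indeed the standard route.

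There is, however, a genuine gap at the step where you claim $\psi$ is surjective ``since its image contains all near-diagonal entries, hence all generators.'' The image of $\psi$ contains only the \emph{products} $t_{i\,i}\overline{t}_{i\,i+1}$, $t_{i+1\,i}t_{i\,i}^{-1}$, $t_{i\,i}t_{i+1\,i+1}^{-1}$, not the entries themselves, and in fact $\psi$ can \emph{never} be surjective; the obstruction is the root-lattice versus weight-lattice discrepancy of type $\mathrm{C}_n$. Concretely, the component $DT^tD^{-1}T=I$ of (\ref{cent}) forces $t_{i'\,i'}=t_{i\,i}^{-1}$, so $\psi(k_n)=t_{n\,n}t_{n+1\,n+1}^{-1}=t_{n\,n}^{2}$: the Cartan part of the image corresponds to the root lattice, which has index two in the lattice generated by the $t_{i\,i}$. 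To make this rigorous, consider the two one-dimensional representations $\chi_{\pm}$ of $U_q(\mathfrak{sp}_{2n})$ defined by
\begin{equation*}
\chi_{\pm}:\qquad T\mapsto \pm\sqrt{-1}\,J,\qquad \overline{T}\mapsto \mp\sqrt{-1}\,J,
\end{equation*}
which are well defined because $R(J\otimes J)=(J\otimes J)R$, $J^t=-J$ and $J^2=I$, so all relations (\ref{Rtt}) and (\ref{cent}) hold for these scalar matrices. Both characters kill every off-diagonal entry, and $\chi_{\pm}\bigl(t_{i\,i}t_{i+1\,i+1}^{-1}\bigr)=1$ for $i<n$ while $\chi_{\pm}\bigl(t_{n\,n}t_{n+1\,n+1}^{-1}\bigr)=-1$; hence $\chi_+$ and $\chi_-$ agree on the subalgebra generated by $\psi(e_i),\psi(f_i),\psi(k_i^{\pm1})$. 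Since $\chi_{\pm}(t_{n\,n})=\pm\sqrt{-1}$ differ, $t_{n\,n}$ does not lie in the image of $\psi$. Consequently $\psi\circ\phi=\mathrm{id}$ cannot be established, and indeed, with $\mathcal{U}_q(\mathfrak{sp}_{2n})$ exactly as presented in Section 2, the assignment of the proposition does not literally parse: an element $t_n$ with $t_nt_{n+1}^{-1}=k_n$ and $t_{n+1}=t_n^{-1}$ would be a square root of $k_n$, which does not exist in the root-lattice form. What [\citenum{NL90}] actually proves --- and what the proposition tacitly means by writing $k_i=t_it_{i+1}^{-1}$ --- is an isomorphism with the weight-lattice (simply connected) form of the Drinfeld--Jimbo algebra, obtained by adjoining invertible group-like generators $t_1,\dots,t_n$ with $t_it_{i+1}^{-1}=k_i$ for $i<n$ and $t_n^2=k_n$. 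Your $\psi$ must be defined on that enlarged algebra, with $\psi(t_i)=t_{i\,i}$; only then do your surjectivity argument and the mutual-inverse check on generators become correct.

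Two smaller debts remain even after this repair: the claim that the near-diagonal entries generate the RTT algebra, and the $q$-Serre verifications, are asserted rather than proved, and these are exactly the laborious computations of the cited theorem. Finally, your fallback injectivity argument via the classical limit is insufficient as stated: both the root-lattice and the weight-lattice forms specialize to $U(\mathfrak{sp}_{2n})$ at $q=1$, so the classical limit is blind to precisely the discrepancy identified above; one needs an $\mathbb{A}$-form/flatness argument of the kind the paper itself uses in Section 4, or a genuine PBW comparison at generic $q$.
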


\section{Coideal subalgebra of quantized enveloping algebra for type $U_q(\mathfrak{sp}_{2n})$}
In this section, we focus on the symmetric pair of type CI: $(\mathfrak{sp}_{2n},\mathfrak{gl}_n$). 
It is well known that $\mathfrak{gl}_n$ is a fixed subalgebra of $\mathfrak{sp}_{2n}$, although no explicit construction is known, which will be provided below.
Based on the construction of the coideal algebra for type A,  as given in \cite{N1}, and the constant solution to the reflection equation to the symmetric pair $(\mathfrak{sp}_{2n},\mathfrak{gl}_n$), we provide a construction of the coideal subalgebra $U^{tw}_q({\mathfrak{gl}_n})$ for $U_q(\mathfrak{sp}_{2n})$, as well as its equivalent presentation to provide an isomorphism to the $\imath$quantum group $\mathcal{U}^{\imath}$ for type C.

\subsection{Symmetric pair $(\mathfrak{sp}_{2n},\mathfrak{gl}_n$)}
The Lie algebra $\mathfrak{gl}_n$ is the general linear Lie algebra 
 associated with the standard basis $e_{i\,j}$, where $e_{i\,j}$ denotes the usual elementary matrix.
In fact, the symplectic Lie algebra $\mathfrak{sp}_{2n}$ is generated by the elements $F_{i\,j}=e_{i\,j}-\epsilon_{i}\epsilon_{j}e_{j'\,i'}$, for $1\leq i,j \leq 2n$. 
Here, $\epsilon_{i}$ is denoted as in Section 2.
For our purpose, we define $J=\sum_k\epsilon_{k}e_{k\,k}$, which corresponds to a special case of the definition provided in \cite{N2}.  
Observe that $J$ is an $2n\times 2n$ invertible matrix such that $J = J^{-1}$. 
Let $\theta: \mathfrak{sp}_{2n}\to \mathfrak{sp}_{2n}$ be an involution such that
\[
\theta : X\mapsto JX^uJ^{-1},
\]
where $u$ denotes the usual transposition such that $e_{i\,j}^u = e_{j\,i}$. 

Consider the decomposition $\mathfrak{sp}_{2n}=\mathfrak{k}\oplus \mathfrak{p}$ into the $-1$ and the $+1$ eigenspace of $\theta$. 
Let $\mathfrak{sp}_{2n}^{\theta}$ be the fixed subalgebra of $\mathfrak{sp}_{2n}$ under a certain involution $\theta$, i.e., $\mathfrak{k}=\mathfrak{sp}_{2n}^{\theta}=\{x\in \mathfrak{sp}_{2n} | \theta(x)=-x \}$.
As a consequence, one has 
$$\mathfrak{sp}_{2n}^{\theta} = \text{span}_{\mathbb{C}}\{G_{i\,j}=\epsilon_{i}F_{i\,j}-\epsilon_{j}F_{j\,i}|1\leq i,j\leq 2n\}.$$
The elements $G_{i\,j}$ satisfy the bracket 
\begin{equation*}
    \begin{aligned}
        [G_{i\,j},G_{k\,l}]=&\epsilon_{j}(\delta_{j\,k}G_{i\,l}+\delta_{j'\,k}G_{i'\,l})-\epsilon_{i}(\delta_{i\,k}G_{j\,l}+\delta_{i'\,k}G_{j'\,l})\\
        &+\epsilon_{i}(\delta_{i\,l}G_{j\,k}+\delta_{i'\,l}G_{j'\,k})-\epsilon_{j}(\delta_{j\,l}G_{i\,k}+\delta_{j'\,l}G_{i'\,k}),\\
    \end{aligned}
\end{equation*}
where the bracket $[x,y]=xy - yx$. 
For $i, j = 1,\cdots,n$, it is easy to get,
$$G_{i'\,j'}=-G_{i\,j},\quad G_{j'\,i'}=G_{i\,j},\quad G_{j\,i}=-G_{i\,j}.$$
\begin{proposition} \label{propgl} 
 There exists an isomorphism of Lie algebras from the general linear algebra $\mathfrak{gl}_n$ to the algebra  $\mathfrak{sp}_{2n}^{\theta}$. 
 Let $\psi:\mathfrak{gl}_n\to \mathfrak{sp}_{2n}^{\theta}$ be the isomorphism such that for $1\leq i<j\leq n$,  it follows that,
    $$e_{i\,i}\mapsto\frac{G_{i'\,i}}{2},\quad e_{j\,j}\mapsto\frac{G_{j'\,j}}{2},\quad e_{i\,j}\mapsto(-1)^{i-j+1}\frac{G_{j\,i}-G_{j'\,i}}{2},\quad e_{j\,i}\mapsto(-1)^{i-j+1}\frac{G_{j\,i}+G_{j'\,i}}{2}.$$
\end{proposition}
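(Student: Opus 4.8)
The plan is to treat $\psi$ as the linear map that sends each element of the standard basis $\{e_{a\,b} : 1\le a,b\le n\}$ of $\mathfrak{gl}_n$ to the prescribed element of $\mathfrak{sp}_{2n}^{\theta}$, and then to prove two separate statements: that $\psi$ is a bijective linear map, and that it intertwines the two Lie brackets. Since $\{e_{a\,b}\}$ is a free basis, prescribing the image of each basis vector already produces a well-defined linear map, so there is no consistency to verify at this stage; the content lies entirely in the bijectivity and in the bracket identity.

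First I would establish that $\psi$ is a linear isomorphism. The images fall into the $n$ diagonal vectors $\tfrac12 G_{i'\,i}$ and, for each pair $1\le i<j\le n$, the two vectors attached to $e_{i\,j}$ and $e_{j\,i}$, each an invertible $\mathbb{C}$-combination of $G_{j\,i}$ and $G_{j'\,i}$; indeed the transition from $(G_{j\,i},G_{j'\,i})$ to $\bigl(\psi(e_{i\,j}),\psi(e_{j\,i})\bigr)$ is $\tfrac12(-1)^{i-j+1}\left(\begin{smallmatrix} 1 & -1 \\ 1 & 1\end{smallmatrix}\right)$, which has nonzero determinant. Using the symmetry relations $G_{i'\,j'}=-G_{i\,j}$, $G_{j'\,i'}=G_{i\,j}$, $G_{j\,i}=-G_{i\,j}$ (and $G_{a\,a}=0$), every generator $G_{a\,b}$ of $\mathfrak{sp}_{2n}^{\theta}$ is, up to sign, one of $G_{i'\,i}$ ($1\le i\le n$) or $G_{j\,i}, G_{j'\,i}$ ($1\le i<j\le n$); these $n+2\binom{n}{2}=n^2$ elements therefore span, and I would check they are independent, so they form a basis. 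Because $\psi$ maps the standard basis of $\mathfrak{gl}_n$ bijectively onto this basis (via the invertible $2\times2$ blocks above and the identity on the diagonal part), and $\dim\mathfrak{gl}_n=n^2=\dim\mathfrak{sp}_{2n}^{\theta}$, it follows that $\psi$ is a linear isomorphism.

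It then remains to verify $\psi([e_{a\,b},e_{c\,d}])=[\psi(e_{a\,b}),\psi(e_{c\,d})]$, where the left-hand side is computed from $[e_{a\,b},e_{c\,d}]=\delta_{b\,c}e_{a\,d}-\delta_{d\,a}e_{c\,b}$ and the right-hand side from the given structure constants of the $G_{i\,j}$. By bilinearity and antisymmetry it suffices to treat unordered pairs, which I would organize by the position of each factor: diagonal with diagonal (both sides should vanish), diagonal with off-diagonal (the Cartan action, reproducing $\pm e_{\bullet\,\bullet}$ or $0$), and the three off-diagonal with off-diagonal types, each subdivided according to how many of the four indices $i,j,k,l$ coincide. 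In every case one expands the right-hand bracket by the $G$-structure constants and then uses the symmetry relations to push every resulting term $G_{i'\,l}, G_{j'\,k},\dots$ back into the chosen basis, after which the parity factors $(-1)^{\bullet-\bullet+1}$ combine so that the two sides agree term by term.

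The hard part will be exactly this last verification: it is a sign- and index-bookkeeping computation in which the $\epsilon$-factors and primed indices of the $G$-bracket must be tracked together with the parity signs $(-1)^{i-j+1}$ built into $\psi$. The delicate cases are the off-diagonal ones, where the bracket of two $G$'s outputs terms carrying primed indices (such as $G_{i'\,l}$ or $G_{j'\,k}$) that do not themselves belong to the chosen basis and must first be renormalized through $G_{a'\,b'}=-G_{a\,b}$ and $G_{a\,b}=-G_{b\,a}$ before they can be compared; a single dropped sign here is what would make the identity appear to fail. To curb the number of cases I would invoke the standard reduction that, for a linear $\psi$, it suffices to check the bracket identity with the first argument ranging only over the Chevalley generators $e_{i\,i+1}, e_{i+1\,i}$ (together with the diagonal $e_{i\,i}$) and the second over an arbitrary basis element: the set of elements on which $\psi$ is bracket-compatible with everything is a Lie subalgebra by the Jacobi identity, and it contains a generating set, hence is all of $\mathfrak{gl}_n$. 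Combined with the bijectivity already shown, this yields that $\psi$ is an isomorphism of Lie algebras.
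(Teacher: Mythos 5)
Your proposal is correct and follows essentially the same route as the paper: the paper's entire proof consists of observing that a direct calculation on the basis verifies $\psi([x,y])=[\psi(x),\psi(y)]$ and that $\psi$ is bijective by definition. Your extra structure --- the explicit invertible $2\times 2$ transition blocks giving bijectivity, and the reduction of the bracket check to Chevalley generators via the Jacobi identity --- merely organizes that same straightforward verification more carefully.
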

\begin{proof}
A straightforward calculation using the basis shows that $\psi([x\,y])=[\psi(x),\psi(y)]$ for all $x,y \in \mathfrak{gl}_n$. 
Moreover, $\psi$ is both surjective and injective by definition.
\end{proof}
In this paper, we introduce a certain quantization of the symmetric pairs $(\mathfrak{sp}_{2n}^{\theta}, \mathfrak{sp}_{2n})$ associated with the involution $\theta$.

\subsection{\ Coideal subalgebra $U_q^{tw}(\mathfrak{gl}_n)$ and its PBW basis}
In this subsection, we introduce the coideal subalgebra $U_q^{tw}(\mathfrak{gl}_n)$ of the quantized universal enveloping algebra  $U_q(\mathfrak{sp}_{2n})$, which specializes to the general linear Lie algebra $\mathfrak{gl}_n$ as $q\to 1$.
Actually, the matrix $J$ satisfies the following reflection equation given in \cite{N2}.
\begin{equation}\label{ref}
    RJ_1R^uJ_2=J_2R^uJ_1R
\end{equation}
in End$\mathbb{C}^{2n}\otimes$ End$\mathbb{C}^{2n}$. Here, $R^u=R^{u_1}$ denotes the matrix obtained from $R$ by the action of the transposition $u$ on the first tensor factor,
\begin{equation*}
    R^u=\sum_{i,j}q^{\delta_{i\,j}-\delta_{i'\,j'}}e_{i\,i}\otimes e_{j\,j}+(q - q^{-1})\sum_{i<j}e_{j\,i}\otimes e_{j\,i}
    -(q - q^{-1})\sum_{i<j}q^{\overline{j}-\overline{i}}\epsilon_i\epsilon_je_{j\,i}\otimes e_{i'\,j'}.
\end{equation*}
Using the matrix $J$, we define the $2n\times2n$ matrix $S=TJ\overline{T}^u=(s_{i\,j})$, where  $s_{i\,j}=\sum_{k=j}^{i}\epsilon_kt_{i\,k}\overline{t}_{j\,k}$. 
\begin{definition}\label{defextu}
    The twisted quantized universal enveloping algebra $U_q^{tw}(\mathfrak{gl}_{n})$ is the subalgebra of $U_q(\mathfrak{sp}_{2n})$ generated by the entries of matrix $S$.
\end{definition} 
Actually, the twisted quantized enveloping algebra $U_q^{tw}(\mathfrak{gl}_n)$ is  a coideal subalgebra of $U_q(\mathfrak{sp}_{2n})$ given below.
\begin{proposition}
The twisted quantized enveloping algebra $U_q^{tw}(\mathfrak{gl}_n)$ is a left coideal subalgebra of $U_q(\mathfrak{sp}_{2n})$. To be more precise, the image of the generators $s_{i\,j}$ under the coproduct $\Delta'$ is given by 
    \begin{equation*}
    \Delta'(s_{i\,j})=\sum_{k,l}^{2n}t_{i\,k}\overline{t}_{j\,l}\otimes s_{k\,l}.
    \end{equation*}
\end{proposition}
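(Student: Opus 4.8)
The plan is to reduce the whole proposition to verifying the displayed comultiplication formula on the generators $s_{i\,j}$. Since $\Delta'$ is an algebra homomorphism and, by Definition~\ref{defextu}, $U_q^{tw}(\mathfrak{gl}_n)$ is generated by the entries of $S$, once I show $\Delta'(s_{i\,j})=\sum_{k,l}t_{i\,k}\overline{t}_{j\,l}\otimes s_{k\,l}$, the left coideal property is immediate: every summand lies in $U_q(\mathfrak{sp}_{2n})\otimes U_q^{tw}(\mathfrak{gl}_n)$, because $t_{i\,k},\overline{t}_{j\,l}\in U_q(\mathfrak{sp}_{2n})$ while $s_{k\,l}\in U_q^{tw}(\mathfrak{gl}_n)$. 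Hence $\Delta'(U_q^{tw}(\mathfrak{gl}_n))\subseteq U_q(\mathfrak{sp}_{2n})\otimes U_q^{tw}(\mathfrak{gl}_n)$, and since $U_q^{tw}(\mathfrak{gl}_n)$ is a subalgebra by definition, this is exactly the assertion.

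To prove the formula I would start from $s_{i\,j}=\sum_k\epsilon_k t_{i\,k}\overline{t}_{j\,k}$, where the sum may be extended over all $k$ because $t_{i\,k}=0$ for $i<k$ and $\overline{t}_{j\,k}=0$ for $j>k$ automatically truncate it to $j\leq k\leq i$. Applying the algebra homomorphism $\Delta'$ and substituting the coproducts $\Delta'(t_{i\,k})=\sum_a t_{i\,a}\otimes t_{a\,k}$ and $\Delta'(\overline{t}_{j\,k})=\sum_b\overline{t}_{j\,b}\otimes\overline{t}_{b\,k}$, then expanding with the multiplication rule $(x\otimes y)(z\otimes w)=xz\otimes yw$ of $U_q(\mathfrak{sp}_{2n})\otimes U_q(\mathfrak{sp}_{2n})$, I obtain
\begin{equation*}
\Delta'(s_{i\,j})=\sum_{a,b}t_{i\,a}\overline{t}_{j\,b}\otimes\Big(\sum_k\epsilon_k t_{a\,k}\overline{t}_{b\,k}\Big).
\end{equation*}

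The last step is to recognize the inner sum as $\sum_k\epsilon_k t_{a\,k}\overline{t}_{b\,k}=s_{a\,b}$, which again follows from the vanishing of $t_{a\,k}$ and $\overline{t}_{b\,k}$ outside the range $b\leq k\leq a$ (in particular $s_{a\,b}=0$ when $a<b$, so the double sum is effectively supported on $a\geq b$); relabelling $a\to k$, $b\to l$ gives exactly $\Delta'(s_{i\,j})=\sum_{k,l}t_{i\,k}\overline{t}_{j\,l}\otimes s_{k\,l}$. The computation is essentially mechanical, so there is no serious obstacle; the one point requiring care is the summation-range bookkeeping, namely ensuring that after regrouping the tensor legs the inner sum reassembles precisely into $s_{a\,b}$ rather than a shifted or truncated variant, which is guaranteed by the triangular support of the $t$ and $\overline{t}$ entries. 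An equivalent and more conceptual packaging of the same argument, which I would use to streamline the write-up, is the matrix identity
\begin{equation*}
(\mathrm{id}\otimes\Delta')(S)=T_{[12]}\,S_{[13]}\,(\overline{T}^u)_{[12]}
\end{equation*}
in $\mathrm{End}\,\C^{2n}\otimes U_q(\mathfrak{sp}_{2n})\otimes U_q^{tw}(\mathfrak{gl}_n)$; it follows directly from the matrix coproducts $(\mathrm{id}\otimes\Delta')(T)=T_{[12]}T_{[13]}$ and $(\mathrm{id}\otimes\Delta')(\overline{T})=\overline{T}_{[12]}\overline{T}_{[13]}$ together with $S=TJ\overline{T}^u$ and the compatibility of the transposition $u$ with $\Delta'$, and its $(i,j)$ entry reproduces the stated formula.
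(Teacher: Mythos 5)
Your proposal is correct and follows essentially the same route as the paper: expand $s_{i\,j}=\sum_k\epsilon_k t_{i\,k}\overline{t}_{j\,k}$ over the full index range, apply the algebra homomorphism $\Delta'$ entrywise, substitute the matrix coproducts of $t_{i\,k}$ and $\overline{t}_{j\,k}$, and regroup the tensor legs to reassemble $s_{k\,l}$. Your explicit bookkeeping of the triangular supports and the concluding remark that the formula immediately yields the left coideal property are just slightly more detailed versions of what the paper leaves implicit.
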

\begin{proof}
    \begin{equation*}
        \begin{aligned}
            \Delta'(s_{i\,j})=&\Delta'(\sum_{s=1}^{2n}\epsilon_st_{i\,s}\overline{t}_{j\,s})=\sum_{s=1}^{2n}\epsilon_s\Delta'(t_{i\,s})\Delta'(\overline{t}_{j\,s})\\
            &=\sum_{s=1}^{2n}\epsilon_s(\sum_{k=1}^{2n}t_{i\,k}\otimes t_{k\,s})(\sum_{l=1}^{2n}\overline{t}_{j\,l}\otimes \overline{t}_{l\,s})
            =\sum_{k,l}^{2n}t_{i\,k}\overline{t}_{j\,l}\otimes s_{k\,l}.
        \end{aligned}
    \end{equation*}
    So the assertion is clear.
\end{proof}
Furthermore,it follows from direct calculation that the matrix transpositions $u$ and $t$, given in the definition (\ref{t}), are commutative. 
Let $ut$ denote the composite transposition of $t$ and $u$, so $e_{i\,j}^{ut}=\epsilon_{i}\epsilon_{j}e_{i'\,j'}$.
\begin{lemma}\label{overss}
Define the matrix $\overline{S}$ as follows,
\begin{align*}
\overline{S}=\overline{T}^{ut}J^tT^t=\sum_{i\,j}e_{i\,j}\otimes\overline{s}_{i\,j},    
\end{align*}
where 
$\overline{s}_{i\,j}=\epsilon_{i}\epsilon_{j}\sum_{k=j}^i\epsilon_{k'}\overline{t}_{i'\,k'}t_{j'\,k'}$. 
$\overline{s}_{i\,j}$ can be spanned by $s_{i\,j}$ as follows,
    \begin{align*}
        &\overline{s}_{i\,i} =s_{i'\,i'}=\epsilon_{i'},\qquad \quad \overline{s}_{i\,j} =0,\quad i<j,\\
        &\overline{s}_{i\,j} =qs_{i\,j}^t=q\epsilon_{i}\epsilon_{j}s_{j'\,i'}, \quad j<i,\  i\neq j', \\\
        &\overline{s}_{j'\,j} =-q^2s_{j'\,j}+(q^2-1)\sum_{m=j+1}^nq^{\overline{m}-\overline{j}}\overline{s}_{m'\,m},\quad j<n.
    \end{align*} 
It is easy to see that $\ \overline{s}_{j'\,j}$ is spanned by $s_{m'\,m},\ j\leq m\leq n$ by induction.
\end{lemma}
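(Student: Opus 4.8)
The plan is to reduce all three families of identities to the single mixed $\overline{t}$--$t$ commutation relation (\ref{rtt2}), after first recording the explicit entry formula
$$\overline{s}_{i\,j}=\epsilon_{i}\epsilon_{j}\sum_{k=j}^{i}\epsilon_{k'}\,\overline{t}_{i'\,k'}t_{j'\,k'}$$
obtained by multiplying out $\overline{S}=\overline{T}^{ut}J^{t}T^{t}$ (using $J^{t}=\sum_{m}\epsilon_{m'}e_{m\,m}$ and the triangularity of $T,\overline{T}$, which forces $i'\le k'\le j'$). The structural point I would emphasise is that every $\overline{s}$-entry is a sum of products $\overline{t}\,t$ with the bar-generator on the \emph{left}, whereas the target entry $s_{j'\,i'}=\sum_{l}\epsilon_{l}\,t_{j'\,l}\overline{t}_{i'\,l}$ is a sum of products $t\,\overline{t}$ in the \emph{opposite} order. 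Thus the three assertions are exactly the statement that reordering $\overline{t}_{i'\,k'}t_{j'\,k'}$ into $t_{j'\,k'}\overline{t}_{i'\,k'}$ and summing reproduces $s_{j'\,i'}$ up to a scalar (plus lower correction terms). At the matrix level this is the heuristic $\overline{S}=S^{t}$ up to operator ordering, which already predicts the factor $q$; I would use it only as a sanity check.

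Parts (i) and (ii) are the routine cases. When $i<j$ the range $k=j,\dots,i$ is empty, so $\overline{s}_{i\,j}=0$; when $i=j$ the single surviving term is $\epsilon_{i'}\overline{t}_{i'\,i'}t_{i'\,i'}=\epsilon_{i'}$ by the inversion relation $\overline{t}_{a\,a}t_{a\,a}=1$, and the same computation gives $s_{i'\,i'}=\epsilon_{i'}$. For (ii), with $j<i$ and $i\neq j'$, I would apply (\ref{rtt2}) to each $\overline{t}_{i'\,k'}t_{j'\,k'}$, i.e.\ with second indices $a=b=k'$ and first indices $i',j'$. The hypothesis $i\neq j'$ kills the Kronecker deltas $\delta_{i'\,(j')'}=\delta_{i'\,j}$ and $\delta_{k'\,k}$, so both doubled-index sums in (\ref{rtt2}) vanish and one is left with
$$\overline{t}_{i'\,k'}t_{j'\,k'}=q\,t_{j'\,k'}\overline{t}_{i'\,k'}-(q-q^{-1})\,\overline{t}_{j'\,k'}t_{i'\,k'}.$$
Summing against $\epsilon_{k'}$ over $k=j,\dots,i$, the first term assembles into $q\,s_{j'\,i'}$ after the substitution $l=k'$, while the correction term vanishes termwise, since $\overline{t}_{j'\,k'}$ and $t_{i'\,k'}$ require $k\le j$ and $k\ge i$ respectively, impossible when $j<i$. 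This gives $\overline{s}_{i\,j}=q\,\epsilon_{i}\epsilon_{j}s_{j'\,i'}=q\,s_{i\,j}^{t}$.

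The hard case, and the main obstacle, is (iii), where $i=j'$. Applying (\ref{rtt2}) to $\overline{t}_{j\,k'}t_{j'\,k'}$, the delta $\delta_{j\,(j')'}=\delta_{j\,j}=1$ is now switched on, so the reflection sum $\sum_{r>j}q^{\overline{r}-\overline{j}}\epsilon_{j}\epsilon_{r}\,\overline{t}_{r\,k'}t_{r'\,k'}$ survives and the two prefactors become $q^{-1}$ and $q$. After clearing the $q^{-1}$ and summing $\epsilon_{k'}$ over $k=j,\dots,j'$, and using $\overline{s}_{j'\,j}=-\sum_{k}\epsilon_{k'}\overline{t}_{j\,k'}t_{j'\,k'}$, I expect three contributions: the reordered term $q^{2}\sum_{k}\epsilon_{k'}t_{j'\,k'}\overline{t}_{j\,k'}=q^{2}s_{j'\,j}$, producing the $-q^{2}s_{j'\,j}$ on the left; a cross term in $\overline{t}_{j'\,k'}t_{j\,k'}$ that again vanishes by the triangularity incompatibility of (ii); and the reflection double sum. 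The delicate step is to recognise the reflection sum as a combination of $\overline{s}$-entries: for each fixed $r$ with $j<r\le n$ I would show $\sum_{k}\epsilon_{k'}\overline{t}_{r\,k'}t_{r'\,k'}=-\overline{s}_{r'\,r}$, which requires extending its effective range from $[r,r']$ to the full $[j,j']$ (legitimate, as triangularity annihilates the extra terms) and noting that $r>n$ contributes nothing for the same reason. Tracking the surviving $q$-exponents together with $q(q-q^{-1})=q^{2}-1$ and the signs $\epsilon_{j}\epsilon_{r}=1$ then yields precisely $-q^{2}s_{j'\,j}+(q^{2}-1)\sum_{m=j+1}^{n}q^{\overline{m}-\overline{j}}\overline{s}_{m'\,m}$. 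The bookkeeping of these exponents and signs, combined with the range-extension argument, is where the real work lies; the concluding remark that $\overline{s}_{j'\,j}$ lies in the span of $s_{m'\,m}$ for $j\le m\le n$ is then immediate by induction, the base case $m=n$ having an empty reflection sum.
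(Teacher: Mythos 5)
Your proposal is correct and follows essentially the same route as the paper: both reduce all three identities to the mixed relation (\ref{rtt2}) applied with equal second indices, kill the correction terms by triangularity of $T$ and $\overline{T}$, and in the $i=j'$ case identify the surviving reflection sum with the entries $\overline{s}_{m'\,m}$ (your "range extension" for fixed $r$ is the paper's exchange of the order of summation), finishing by induction from the base case $\overline{s}_{n'\,n}=-q^2 s_{n'\,n}$. The only difference is cosmetic: the paper phrases the computation in terms of the transposed entries $\overline{s}^{\,t}_{i\,j}$ and unprimed indices and then transposes back, while you work directly on $\overline{s}_{i\,j}$ with primed indices.
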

\begin{proof} 
According to the definition of $\overline{S}$, we have 
$\overline{s}_{i\,i}=\epsilon_{i'}$ and $\overline{s}_{i\,j} =0$ for $i<j$. 
Moreover, we can easily derive the following equation 
    \begin{equation*}\label{ssij}
        a\overline{s}_{i\,j}^t-bs_{i\,j}=\sum_{k=j}^{i}\epsilon_{k}(a\overline{t}_{j\,k}t_{i\,k}-bt_{i\,k}\overline{t}_{j\,k}), \quad a,b\in \C(q).
    \end{equation*}
    However, due to (\ref{rtt2}), we conclude that $$\overline{t}_{j\,k}t_{i\,k}-qt_{i\,k}\overline{t}_{j\,k}=-(q-q^{-1})(\overline{t}_{i\,k}t_{j\,k})=0, \quad j\leq k \leq i, \ i\neq j\neq j',$$ which implies $\overline{s}_{i\,j}^t-qs_{i\,j}=0$, $j<i,i\neq j'$. From the equation
    \begin{equation*}
        \begin{aligned}
            q^{-1}\overline{s}_{j'\,j}^t-qs_{j'\,j}=&\sum_{k=j}^{j'}\epsilon_{k}(q^{-1}\overline{t}_{j\,k}t_{j'\,k}-qt_{j'\,k}\overline{t}_{j\,k})
        =(q-q^{-1})\sum_{k=j}^{j'}\epsilon_{k}(\sum_{l=j+1}^{\text{min}\{k,k'\}}q^{{\overline{l}}-\overline{j}}\epsilon_{j}\epsilon_l\overline{t}_{l\,k}t_{l'\,k})\\
        =&(q-q^{-1})(q^{\overline{j+1}-\overline{j}}\sum_{k=j+1}^{(j+1)'}\epsilon_{k}\overline{t}_{j+1\,k}t_{(j+1)'\,k}+\cdots+q^{\overline{n}-\overline{j}}\sum_{k=n}^{n'}\epsilon_{k}\overline{t}_{n\,k}t_{n'\,k})\\
        =&(q-q^{-1})\sum_{m=j+1}^{n}q^{\overline{m}-\overline{j}}\overline{s}_{m'\,m}^t,
        \end{aligned}
    \end{equation*}
    it follows that
    \begin{equation*}
        \begin{aligned}\label{ssii'}
        \overline{s}_{j'\,j}=-q^2s_{j'\,j}+(q^2-1)\sum_{m=j+1}^{n}q^{\overline{m}-\overline{j}}\overline{s}_{m'\,m},\quad j\leq n.
        \end{aligned}
    \end{equation*}
We can directly derive $\overline{s}_{n'\,n}=-q^2s_{n'\,n}$.
Therefore, it can be deduced that $$\overline{s}_{j'\,j}=\sum_{m=j}^{n} (-1)^{k_m}q^{k_m}s_{m'\,m}$$ with powers $k_m$  by induction on $j$ for $j\leq n$.
\end{proof}

\begin{proposition}
  The matrix $S$ satisfies the following relations.
\begin{align}
    &s_{i\,j}=0,\qquad 1\leq i<j\leq 2n,\label{sij}\\
    &s_{i\,i}=\epsilon_i,\qquad1\leq i\leq 2n,\label{sii}\\
    &RS_1R^uS_2=S_2R^uS_1R,\label{srefection}\\
    &\overline{S}D^{-1}SD=-I.\label{Scentre}
\end{align}  
\end{proposition}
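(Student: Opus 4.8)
The plan is to treat the four relations in order of increasing difficulty: \eqref{sij} and \eqref{sii} are structural, \eqref{srefection} is the main computation, and \eqref{Scentre} is a transposition identity built on the central relations \eqref{cent}.

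For \eqref{sij} and \eqref{sii} I would argue directly from the factorization $S=TJ\overline{T}^u$. Since $T$ is lower triangular, $J$ is diagonal, and $u$ turns the upper triangular matrix $\overline{T}$ into a lower triangular one, the product $S$ is lower triangular, which gives $s_{i\,j}=0$ for $i<j$. On the diagonal the defining sum collapses to the single term $s_{i\,i}=\epsilon_i t_{i\,i}\overline{t}_{i\,i}$, and the relation $t_{i\,i}\overline{t}_{i\,i}=1$ from \eqref{Rtt} then yields $s_{i\,i}=\epsilon_i$.

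The reflection equation \eqref{srefection} is the heart of the proposition, and I would prove it by showing that the $R$-matrix dressing $S=TJ\overline{T}^u$ sends the constant solution $J$ of \eqref{ref} to another solution. Expanding $RS_1R^uS_2 = R\,T_1J_1\overline{T}_1^u\,R^u\,T_2J_2\overline{T}_2^u$, I would use the three RTT relations of \eqref{Rtt}, namely $RT_1T_2=T_2T_1R$, $R\overline{T}_1\overline{T}_2=\overline{T}_2\overline{T}_1R$, and the mixed relation $R\overline{T}_1T_2=T_2\overline{T}_1R$, to carry all the $T$ factors to the left and all the $\overline{T}^u$ factors to the right, thereby isolating the core $R\,J_1\,R^u\,J_2$ in the middle. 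Passing the central $\overline{T}_1^u$ through $R^uT_2$ is exactly the step that forces the appearance of $u$, so I would first derive the partial $u_1$-transposed forms of the mixed RTT relation (transposition on the first tensor slot only) to obtain the auxiliary crossing identities that commute $\overline{T}_1^u$ and $T_2$ past $R^u$. Once the middle is isolated, I apply the reflection equation \eqref{ref}, $RJ_1R^uJ_2=J_2R^uJ_1R$, to interchange $J_1$ and $J_2$, and finally run the RTT manipulations backwards to reassemble the right-hand side $S_2R^uS_1R$. The main obstacle is the transposition bookkeeping: $u$ is an anti-automorphism only in the first tensor factor, leaving the order of the algebra entries and of the second factor intact, so each move must track the correct reordering, and one must check that the alternation of $R$ and $R^u$ produced by the RTT steps matches precisely the pattern demanded by \eqref{ref}.

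Finally, for the central relation \eqref{Scentre} I would substitute $\overline{S}=\overline{T}^{ut}J^tT^t$ and $S=TJ\overline{T}^u$ into $\overline{S}D^{-1}SD$, giving $\overline{T}^{ut}J^tT^tD^{-1}TJ\overline{T}^uD$. The central relation $DT^tD^{-1}T=I$ of \eqref{cent} gives $T^tD^{-1}T=D^{-1}$, while a direct computation with $J=\sum_k\epsilon_k e_{k\,k}$ shows $J^t=-J$, $J^2=I$, and that $J$ is diagonal, hence $J^tD^{-1}J=-D^{-1}$. These collapse the expression to $-\overline{T}^{ut}D^{-1}\overline{T}^uD$, so it remains to prove $\overline{T}^{ut}D^{-1}\overline{T}^uD=I$. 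This is the $\overline{T}$-analogue of the reduction just performed, equivalent to the central relation $D\overline{T}^tD^{-1}\overline{T}=I$ of \eqref{cent} after the index reflection $i\mapsto i'=2n+1-i$ implemented by $J$; because $u$ is not a global anti-automorphism, I would verify it entrywise rather than by formally transposing a matrix product.
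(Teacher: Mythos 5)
Your proposal is correct and matches the paper's own proof in all essentials: the triangular-factorization argument for \eqref{sij} and \eqref{sii}, the sandwiching of the constant reflection equation $RJ_1R^uJ_2=J_2R^uJ_1R$ between the RTT relations and the partial-$u_1$-transposed crossing identity $\overline{T}_1^uR^uT_2=T_2R^u\overline{T}_1^u$ for \eqref{srefection}, and the $u$-transposed forms of the central relations \eqref{cent} combined with $J^t=-J$ for \eqref{Scentre}. The only difference is organizational: the paper first records all $u$-transposed central relations ($T^uDT^{ut}D^{-1}=\overline{T}^uD\overline{T}^{ut}D^{-1}=I$, etc.) and then substitutes, whereas you reduce first and verify the single remaining identity $\overline{T}^{ut}D^{-1}\overline{T}^uD=I$ (which, under the index reflection $i\mapsto i'$, is the transported form of $\overline{T}D\overline{T}^tD^{-1}=I$ rather than of $D\overline{T}^tD^{-1}\overline{T}=I$ — a harmless mis-attribution, since both identities lie in \eqref{cent}).
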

\begin{proof}

It is easy to see that $s_{i\,i}=\epsilon_i, 1\leq i\leq 2n$, and $s_{i\,j}=0, 1\leq i<j\leq 2n.$ 
From the reflection equation (\ref{ref}) and the following consequences
\begin{equation*}
    RJR^u=R^uJR,\quad \overline{T}_1^uR^uT_2=T_2R^u\overline{T}^u_1,\quad R\overline{T}_1^u\overline{T}_2^u=\overline{T}_2^u\overline{T}_1^uR,
\end{equation*}
we assert 
\begin{equation}\label{RTJ}
    RT_1J_1\overline{T}_1^uR^uT_2J_2\overline{T}_2^u=\overline{T}_2^uJ_2T_2R^u\overline{T}_1^uJ_1RT_1.
\end{equation}
The matrix $S$ satisfies the relation $RS_1R^uS_2=S_2R^uS_1R$ from (\ref{RTJ}).
   Due to (\ref{cent}), and following straightforward calculations, one obtains
    \begin{equation}\label{TDTD}
        \begin{aligned}
        T^uDT^{ut}D^{-1}=\overline{T}^uD\overline{T}^{ut}D^{-1}=I,\\
        DT^{ut}D^{-1}T^u=D\overline{T}^{ut}D^{-1}\overline{T}^u=I.\\
        \end{aligned}    
    \end{equation}
    It follows from (\ref{TDTD}) that
    \begin{equation*}
        \overline{T}^{ut}J^tT^tD^{-1}TJ\overline{T}^uD=-I,
    \end{equation*}
    which implies $\overline{S}D^{-1}SD=-I$.
\end{proof}
\begin{remark}
Indeed, the relations (\ref{sij})--(\ref{Scentre}) are precisely  the defining relation of the twisted quantized universal enveloping algebra $U_q^{tw}(\mathfrak{gl}_n)$. For further details please refer to  Theorem \ref{UPbw}. 
\end{remark}
 \begin{lemma}
We have the following specific relations
\begin{align}
    &\overline{s}_{i\,i}s_{i\,i}=\epsilon_{i'}\epsilon_{i}=-1, \quad & i= j,\label{siii'i'}\\
    &\sum_{k=j+1,k\neq i'}^{i-1}q^{\overline{j}-\overline{k}+1}\epsilon_{i}\epsilon_{k}s_{k'\,i'}s_{k\,j}+q\epsilon_is_{j'\,i'}+\epsilon_{i'}q^{\overline{j}-\overline{i}}s_{i\,j}+\delta_{j\leq i'\leq i}q^{\overline{j}+\overline{i}}\overline{s}_{i\,i'} s_{i'\,j}=0,\quad & i > j \label{scenspecific2}.
\end{align}   
 \end{lemma}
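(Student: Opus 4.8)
The plan is to read off both identities as the matrix-entry form of the central relation \eqref{Scentre}, namely $\overline{S}D^{-1}SD=-I$. First I would expand the $(i,j)$-entry of the left-hand side. Since $D=\mathrm{diag}(q^{\overline{1}},\dots,q^{\overline{2n}})$ is diagonal with $D_{k\,k}=q^{\overline{k}}$ and $(D^{-1})_{k\,k}=q^{-\overline{k}}$, the product collapses to
\begin{equation*}
\big(\overline{S}D^{-1}SD\big)_{i\,j}=q^{\overline{j}}\sum_{k}q^{-\overline{k}}\,\overline{s}_{i\,k}s_{k\,j}=\sum_{k}q^{\overline{j}-\overline{k}}\,\overline{s}_{i\,k}s_{k\,j},
\end{equation*}
so that \eqref{Scentre} becomes the scalar family $\sum_{k}q^{\overline{j}-\overline{k}}\,\overline{s}_{i\,k}s_{k\,j}=-\delta_{i\,j}$. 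I would then cut down the range of $k$ using the support conditions: $s_{k\,j}=0$ for $k<j$ by \eqref{sij}, and $\overline{s}_{i\,k}=0$ for $i<k$ by Lemma \ref{overss}. Hence the sum runs only over $j\le k\le i$, and in particular vanishes unless $i\ge j$.

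For the diagonal case $i=j$ only $k=i$ remains, giving $\overline{s}_{i\,i}s_{i\,i}=-1$; combined with $\overline{s}_{i\,i}=\epsilon_{i'}$ (Lemma \ref{overss}) and $s_{i\,i}=\epsilon_i$ from \eqref{sii}, this is exactly \eqref{siii'i'}, where $\epsilon_{i'}\epsilon_i=-1$ because $i$ and $i'=2n+1-i$ always lie on opposite sides of $n$.

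For $i>j$ I would isolate the boundary values of $k$ and rewrite the interior ones through Lemma \ref{overss}. The $k=i$ term contributes $q^{\overline{j}-\overline{i}}\epsilon_{i'}s_{i\,j}$ via $\overline{s}_{i\,i}=\epsilon_{i'}$; the $k=j$ term contributes $q\epsilon_i s_{j'\,i'}$ via $\overline{s}_{i\,j}=q\epsilon_i\epsilon_j s_{j'\,i'}$ together with $s_{j\,j}=\epsilon_j$; and for each interior index $j<k<i$ with $k\ne i'$ the substitution $\overline{s}_{i\,k}=q\epsilon_i\epsilon_k s_{k'\,i'}$ produces the summand $q^{\overline{j}-\overline{k}+1}\epsilon_i\epsilon_k s_{k'\,i'}s_{k\,j}$. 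The remaining index $k=i'$, which occurs precisely when $j\le i'\le i$, cannot be reduced to a single $s$-entry and is kept symbolically; using $\overline{i'}=-\overline{i}$ its weight $q^{\overline{j}-\overline{i'}}$ becomes $q^{\overline{j}+\overline{i}}$, yielding the term $\delta_{j\le i'\le i}\,q^{\overline{j}+\overline{i}}\overline{s}_{i\,i'}s_{i'\,j}$. Assembling these four contributions reproduces \eqref{scenspecific2}.

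The main obstacle is the careful bookkeeping of the index $k=i'$: one must verify $\overline{i'}=-\overline{i}$ to match the exponent, and keep the special ``central'' value of $\overline{s}$ (the third line of Lemma \ref{overss}) isolated rather than expanded, since it is precisely the irreducible central contribution the relation is meant to encode. Some care is also needed at the degenerate position $j=i'$, where the indices $k=j$ and $k=i'$ collide; there the splitting must be organized so that this term is counted once, appealing to the special form of $\overline{s}_{i\,i'}$ from Lemma \ref{overss} rather than the generic formula $\overline{s}_{i\,j}=q\epsilon_i\epsilon_j s_{j'\,i'}$, which is valid only for $i\ne j'$.
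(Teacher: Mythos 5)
Your proposal is correct and takes essentially the same route as the paper: the paper's own proof likewise expands the $(i,j)$ entry of \eqref{Scentre} into $\sum_{k=j}^{i}q^{\overline{j}-\overline{k}}\,\overline{s}_{i\,k}s_{k\,j}=-\delta_{i\,j}$ (its equation \eqref{nScen}) and then substitutes the expressions from Lemma \ref{overss}, leaving the rest as ``direct calculation.'' Your version simply spells out that calculation — the triangular truncation of the $k$-range, the boundary terms $k=i$ and $k=j$, the interior substitution $\overline{s}_{i\,k}=q\epsilon_i\epsilon_k s_{k'\,i'}$, the exponent check $\overline{i'}=-\overline{i}$ for the retained $k=i'$ term, and the collision at $j=i'$ where the generic formula for $\overline{s}_{i\,j}$ does not apply — and is in fact more careful than the paper on that last degenerate point.
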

\begin{proof}
It follows from (\ref{Scentre}) that
        \begin{equation} \label{nScen}
        e_{i\,j}\otimes \sum_{k=j}^{i}q^{\overline{j}-\overline{k}}\overline{s}_{i\,k}s_{k\,j}=-I.
        \end{equation}
        Moreover, by Lemma \ref{overss}, the specific relations (\ref{siii'i'}) and (\ref{scenspecific2}) can be given by direct calculation. This lemma will be used in Section 4.
\end{proof}



Rewriting (\ref{srefection}) in terms of generators, we obtain the specific relations involving  $s_{i\,j}$,
\begin{equation}\label{sre}
    \begin{aligned}
        q^{\delta_{i\,j}-\delta_{i\,j'}+\delta_{a\,j}-\delta_{a\,j'}}s_{i\,a}s_{j\,b}
        =&q^{\delta_{i\,b}-\delta_{i\,b'}+\delta_{a\,b}-\delta_{a\,b'}}s_{j\,b}s_{i\,a}\\
        &+(q-q^{-1})q^{\delta_{i\,a}-\delta_{i\,a'}}(\delta_{b<a}-\delta_{i<j})s_{j\,a}s_{i\,b}\\
        &+(q-q^{-1})(q^{\delta_{a\,b}-\delta_{a\,b'}}\delta_{b<i}s_{j\,i}s_{b\,a}-q^{\delta_{i\,j}-\delta_{i\,j'}}\delta_{a<j}s_{i\,j}s_{a\,b})\\
        &+(q-q^{-1})^2(\delta_{b<a<i}-\delta_{a<i<j})s_{j\,i}s_{a\,b}\\
        &+(q-q^{-1})^2\delta_{a\,i'}\sum_{k=i'+1}q^{\overline{k}-\overline{i'}}\epsilon_i\epsilon_{k'}(\delta_{i<j}-\delta_{b<i'})s_{j\,k}s_{k'\,b}\\
        &-(q-q^{-1})\delta_{b\,a'}\sum_{k=1}^{a-1}q^{\overline{a}-\overline{k}}q^{\delta_{i\,k'}-\delta_{i\,k}}\epsilon_k\epsilon_as_{j\,k'}s_{i\,k}\\
        &-(q-q^{-1})^2\delta_{b\,a'}\sum_{k=i'+1}^{a-1}q^{\overline{a}-\overline{k}}\epsilon_k\epsilon_as_{j\,i}s_{k'\,k}\\
        &-(q-q^{-1})\delta_{b\,i'}\sum_{k=i'+1}q^{\overline{i}-\overline{k'}}q^{\delta_{a\,i'}-\delta_{a\,i}}\epsilon_i\epsilon_{k'}s_{j\,k}s_{k'\,a}\\
        &+(q-q^{-1})\delta_{a\,j'}\sum_{k=j'+1}q^{\overline{k}-\overline{j'}}q^{\delta_{i\,j}-\delta_{i\,j'}}\epsilon_k\epsilon_{j'}s_{i\,k}s_{k'\,b}\\
        &+(q-q^{-1})\delta_{i\,j'}\sum_{k=i+1}q^{\overline{k}-\overline{i}}q^{\delta_{a\,k'}-\delta_{a\,k}}\epsilon_i\epsilon_{k}s_{k\,a}s_{k'\,b}\\
        &+(q-q^{-1})^2\delta_{i\,j'}\sum_{k=i+1}^{a'-1}q^{\overline{k}-\overline{i}}\epsilon_k\epsilon_{i}s_{k\,k'}s_{a\,b}.\\
    \end{aligned}    
\end{equation}

\begin{theorem}\label{pbw}
Consider the twisted quantized eveloping algebra $U_q^{tw}(\mathfrak{gl}_n)$ with generators $s_{i\,j}$ for $i,j = 1\cdots,n$ and $i\geq j$, the defining relations are written in terms of the matrix $S=(s_{i\,j})$ by the relations (\ref{sij})--(\ref{Scentre}). Then the ordered monomials of the form
        \begin{equation*}
s_{2\,1}^{k_{2\,1}}s_{3\,1}^{k_{3\,1}}s_{3\,2}^{k_{3\,2}}\cdots s_{n\,1}^{k_{n\,1}}\cdots s_{n,n-1}^{k_{n,n-1}}s_{n+1\,1}^{k_{n+1\,1}}\cdots s_{n+1\,n}^{k_{n+1\,n}}\cdots s_{2n\,1}^{k_{2n\,1}},
    \end{equation*}
with nonnegative powers $k_{i\,j}$ constitute a basis of the algebra $U_q^{tw}(\mathfrak{gl}_n)$.
\end{theorem}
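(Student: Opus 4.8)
The plan is to establish the result by the two standard halves of a PBW argument, following the $\mathbb{A}$-form strategy indicated in \cite{M1}: first show that the stated ordered monomials \emph{span} $U_q^{tw}(\mathfrak{gl}_n)$, and then show that they are \emph{linearly independent}. The spanning half exhausts the reflection and central relations to produce an upper bound on the size of each graded piece, while the independence half produces a matching lower bound by passing to the commutative limit $q\to 1$.

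For the spanning step I would fix the total order on the generators $s_{i\,j}$ (with $i>j$) induced by the left-to-right order in which they appear in the stated monomial, and show that any product of generators reduces to a $\C(q)$-linear combination of ordered monomials. The engine is the expanded reflection relation (\ref{sre}): read with the pair $(j,b)$ preceding $(i,a)$, it rewrites a disordered product $s_{j\,b}s_{i\,a}$ as a scalar multiple of the correctly ordered product $s_{i\,a}s_{j\,b}$ plus correction terms, and I must check that every correction (in particular the $(q-q^{-1})^2$ terms and the sums over $k$) is strictly smaller than $s_{j\,b}s_{i\,a}$ in a fixed well-founded monomial order, say by total degree and then lexicographically, so that iterated rewriting terminates; this is essentially a diamond-lemma confluence check. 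In parallel the generators absent from the monomial are eliminated: the diagonal entries are scalars by (\ref{sii}) and (\ref{siii'i'}), while the redundant lower-right entries are expressed in terms of the retained generators through the central relation unpacked as (\ref{nScen}), its specialization (\ref{scenspecific2}), and Lemma \ref{overss}. A direct count then confirms that exactly $n^2$ generators survive, matching $\dim\mathfrak{gl}_n$.

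For linear independence I would pass to the integral form over $\mathbb{A}=\C[q,q^{-1}]$: let $U_\mathbb{A}$ be the $\mathbb{A}$-subalgebra generated by the $s_{i\,j}$, which is well defined since all relations (\ref{sij})--(\ref{Scentre}) have coefficients in $\mathbb{A}$; by the spanning step it is spanned over $\mathbb{A}$ by the ordered monomials. Specializing $q\to 1$ kills every $(q-q^{-1})$ correction in (\ref{sre}) and trivializes the $q$-powers in (\ref{Scentre}) (where $D\to I$), so the specialization $U_1=U_\mathbb{A}\otimes_\mathbb{A}\C$ is commutative, i.e. the associated graded Poisson object. Using the explicit realization $s_{i\,j}=\sum_{k}\epsilon_k t_{i\,k}\overline{t}_{j\,k}$ of Definition \ref{defextu} inside $U_q(\mathfrak{sp}_{2n})$ and the known structure of the latter at $q=1$, I would identify $U_1$ with the symmetric algebra on the $n^2$ retained generators of $\mathfrak{k}=\mathfrak{sp}_{2n}^{\theta}\cong\mathfrak{gl}_n$ (Proposition \ref{propgl}), the key point being that these generators are algebraically independent in the commutative limit. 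In the symmetric algebra the images of the stated monomials are visibly linearly independent, and a standard freeness argument for the $\mathbb{A}$-form lifts this to $\C(q)$-linear independence; combined with the spanning step, the monomials form a basis.

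The step I expect to be the main obstacle is the interaction of the reordering with the central relations, exactly the intricacy flagged in the Remark after Definition \ref{t} and in the introduction. Unlike type $\mathrm{A}$, relation (\ref{Scentre}) is not homogeneous for the naive degree, so I must choose the filtration and the elimination order for the redundant generators so that the associated graded is precisely the polynomial algebra on the $n^2$ survivors, with no spurious relations forcing a dimension collapse; equivalently, I must verify that the straightening of the spanning step remains genuinely confluent once the central-relation substitutions are interleaved, and that the identification of $U_1$ with the full symmetric algebra $S(\mathfrak{k})$ is an isomorphism rather than a proper quotient. This confluence-plus-nondegeneracy check is where the bulk of the careful computation lies.
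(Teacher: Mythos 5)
Your spanning half is essentially the paper's own argument (Lemmas \ref{sijspan}, \ref{weakp}, \ref{lem:pbw}): eliminate the generators $s_{i\,j}$ with $i>j>i'$ through the central relation (\ref{scenspecific2}), then straighten products using (\ref{sre}) by induction on a weight; packaging this as a diamond-lemma confluence check is a cosmetic difference. The genuine gap is in your independence half. You propose to take the $\mathbb{A}$-subalgebra generated by the $s_{i\,j}$ themselves, specialize at $q=1$ to a commutative algebra $U_1$, and identify $U_1$ with the symmetric algebra $S(\mathfrak{k})$ by appealing to the realization $s_{i\,j}=\sum_k\epsilon_k t_{i\,k}\overline{t}_{j\,k}$ inside $U_q(\mathfrak{sp}_{2n})$ "and the known structure of the latter at $q=1$". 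That appeal fails concretely: in the standard $\mathbb{A}$-form of $U_q(\mathfrak{sp}_{2n})$ one has $t_{i\,k}\in(q_i-q_i^{-1})U_{\mathbb{A}}$ for $i>k$ and $\overline{t}_{j\,k}\in(q_j-q_j^{-1})U_{\mathbb{A}}$ for $j<k$, so every term of $s_{i\,j}$ with $i>j$ carries a factor vanishing at $q=1$, and all your generators map to $0$ in $U(\mathfrak{sp}_{2n})$. The realization therefore gives no lower bound at all --- the images of all ordered monomials at $q=1$ vanish --- and the claim that $U_1$ is the full polynomial ring on $n^2$ variables, rather than a proper quotient, is exactly the linear independence you are trying to prove. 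The circularity you flag in your last paragraph is real, and nothing in your outline breaks it; indeed the unrescaled $q\to1$ limit is the Poisson algebra $\mathcal{P}_n$ of Section 6 of the paper, whose identification with a polynomial ring is a \emph{consequence} of the PBW theorem, not an available ingredient for its proof.

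The paper breaks the circle by rescaling before specializing: it works with $\sigma_{i\,j}:=s_{i\,j}/(q-q^{-1})$, which lie in the $\mathbb{A}$-form of $U_q(\mathfrak{sp}_{2n})$ and specialize to the \emph{nonzero} elements $G_{i\,j}=\epsilon_iF_{i\,j}-\epsilon_jF_{j\,i}$ of $\mathfrak{sp}_{2n}^{\theta}\cong\mathfrak{gl}_n$ (Proposition \ref{propgl}). The $q=1$ limit of the relevant subalgebra is then the noncommutative enveloping algebra $U(\mathfrak{gl}_n)\subset U(\mathfrak{sp}_{2n})$, and the classical PBW theorem for $\mathfrak{gl}_n$ --- not a symmetric-algebra identification --- supplies the independence of ordered monomials in the $G_{i\,j}$. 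A nontrivial $\C(q)$-relation among ordered monomials in the $s_{i\,j}$ can be cleared of denominators and normalized so that not all coefficients vanish at $q=1$; reducing modulo $q-1$ then yields a nontrivial relation among ordered monomials in the $G_{i\,j}$, a contradiction (Theorem \ref{UPbw}). If you insist on a commutative target you could further pass from $U(\mathfrak{gl}_n)$ to its associated graded $S(\mathfrak{gl}_n)$, but the rescaling by $(q-q^{-1})$ is the unavoidable step your proposal is missing.
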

The detailed proof of this main theorem will be presented in Section 4. 

\subsection{An equivalent presentation of $U_q^{tw}(\mathfrak{gl}_n)$}
Let $C=\text{diag}(c_1,\cdots,c_n,c_{n'},\cdots,c_{1'})$ such that 
$$c_1c_{1'}=c_2c_{2'}=\cdots =c_nc_{n'}=\lambda\neq 0.$$
Note that $C$ is an invertible matrix and $C^t=\lambda C^{-1},\ C^u = C$. In \cite{N2}, it was shown that $C$ satisfies the reflection equation $$RC_1R^uC_2=C_2R^uC_1R.$$ 
Therefore, we can define a family of subalgebras for $U_q(\mathfrak{sp}_{2n})$ parametrized by the matrices $C$. 
\begin{lemma} \label{TT'}
    The assignment 
    \begin{equation*}
    \tau:    T\mapsto T'=\lambda^{-\frac{1}{2}}C^{\frac{1}{2}}T C^{\frac{1}{2}},\qquad \overline{T}\mapsto \overline{T}'=\lambda^{-\frac{1}{2}}C^{\frac{1}{2}}\overline{T} C^{\frac{1}{2}},
    \end{equation*}
    defines an algebraic automorphism of $U_q(\mathfrak{sp}_{2n})$.
\end{lemma}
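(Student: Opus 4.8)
The plan is to show $\tau$ is a well-defined algebra endomorphism by checking that $T'$ and $\overline{T}'$ satisfy every defining relation of $U_q(\mathfrak{sp}_{2n})$, and then to exhibit an explicit inverse. Since $C$ is diagonal and invertible, fix $C^{1/2}=\mathrm{diag}(c_1^{1/2},\dots,c_n^{1/2},c_{n'}^{1/2},\dots,c_{1'}^{1/2})$; then on generators $\tau$ acts by $t_{i\,j}\mapsto \lambda^{-1/2}c_i^{1/2}c_j^{1/2}t_{i\,j}$ and similarly for $\overline{t}_{i\,j}$, i.e.\ it merely rescales each generator by a nonzero scalar. Consequently, once $\tau$ is shown to respect the relations, the assignment $t_{i\,j}\mapsto \lambda^{1/2}c_i^{-1/2}c_j^{-1/2}t_{i\,j}$ (conjugation by $C^{-1/2}$, scaled by $\lambda^{1/2}$) provides a two-sided inverse on generators, so $\tau$ is automatically bijective; thus the whole content lies in the verification of the relations.

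The elementary relations are immediate: conjugation by a diagonal matrix preserves the triangular zero pattern, so $t'_{i\,j}=\overline{t}'_{j\,i}=0$ for $i<j$, and the diagonal relations $t_{i\,i}\overline{t}_{i\,i}=\overline{t}_{i\,i}t_{i\,i}=1$ follow from a direct computation with the normalization $\lambda^{-1/2}$ together with $c_ic_{i'}=\lambda$. The substantive part is the three $R$-matrix relations $RT_1T_2=T_2T_1R$, $R\overline{T}_1\overline{T}_2=\overline{T}_2\overline{T}_1R$, and $R\overline{T}_1T_2=T_2\overline{T}_1R$. Here I would first use that $C_1^{1/2}=C^{1/2}\otimes I$ commutes with every entry of $T_2,\overline{T}_2$ and with $C_2^{1/2}$ (they occupy disjoint tensor legs), and symmetrically for $C_2^{1/2}$, to slide all the conjugating factors to the outside; for instance $T_1'T_2'=\lambda^{-1}(C_1C_2)^{1/2}T_1T_2(C_1C_2)^{1/2}$, and likewise for the mixed and barred products. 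Each relation then reduces to the single commutation $R\,(C_1C_2)^{1/2}=(C_1C_2)^{1/2}\,R$, after which the original relations (\ref{Rtt}) finish the job.

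The \emph{crux} is therefore this commutation, and it is exactly where the defining constraint $c_1c_{1'}=\cdots=c_nc_{n'}=\lambda$ enters. Expanding $R$ as in (\ref{R}) into its diagonal part, its $e_{i\,j}\otimes e_{j\,i}$ part, and its crossing part $e_{i\,j}\otimes e_{i'\,j'}$, the diagonal part commutes with the diagonal matrix $(C_1C_2)^{1/2}$ trivially; on the swap terms the scalar produced on the right, $(c_jc_i)^{1/2}$, equals the one produced on the left, $(c_ic_j)^{1/2}$; and on the crossing terms the right-hand scalar is $(c_jc_{j'})^{1/2}=\lambda^{1/2}$ while the left-hand scalar is $(c_ic_{i'})^{1/2}=\lambda^{1/2}$, the two agreeing precisely because $c_kc_{k'}=\lambda$. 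This is the same feature that makes $C$ a solution of the reflection equation $RC_1R^uC_2=C_2R^uC_1R$ recorded just before the lemma. I expect this bookkeeping—keeping straight which tensor leg each $C^{1/2}$ acts on while matching the crossing coefficients—to be the main obstacle; everything else is formal.

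Finally, for the central relations (\ref{cent}) I would use $C^t=\lambda C^{-1}$, which gives $(C^{1/2})^t=\lambda^{1/2}C^{-1/2}$, to compute $(T')^t=\lambda^{1/2}C^{-1/2}T^tC^{-1/2}$, and then exploit $[C,D]=0$ (both diagonal) to cancel the conjugating factors: $T'D(T')^tD^{-1}=C^{1/2}T\,(C^{1/2}DC^{-1/2})\,T^tC^{-1/2}D^{-1}=C^{1/2}(TDT^t)C^{-1/2}D^{-1}=C^{1/2}DC^{-1/2}D^{-1}=I$, using $TDT^t=D$ from (\ref{cent}). The remaining three identities of (\ref{cent}) follow in exactly the same way, which shows that $\tau$ descends to the quotient $U_q(\mathfrak{sp}_{2n})$ and hence is an automorphism.
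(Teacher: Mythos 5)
Your strategy coincides with the paper's own proof: both arguments reduce the lemma to checking that $T'$ and $\overline{T}'$ satisfy the RTT relations in (\ref{Rtt}) and the central relations (\ref{cent}); both settle the RTT relations by sliding the diagonal factors outward and invoking the commutation $R\,C_1^{\frac{1}{2}}C_2^{\frac{1}{2}}=C_1^{\frac{1}{2}}C_2^{\frac{1}{2}}\,R$; and both settle (\ref{cent}) via $(C^{\frac{1}{2}})^t=\lambda^{\frac{1}{2}}(C^{\frac{1}{2}})^{-1}$ together with the commutativity of the diagonal matrices $C$ and $D$ and the relation $TDT^t=D$. You are in fact more explicit than the paper at two points: you verify the key commutation separately on the diagonal, swap, and crossing parts of (\ref{R}) (the paper merely asserts that it ``follows from the properties of $C$ and the specific form of $R$''), and you spell out why $\tau$ is invertible.

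There is, however, a genuine gap at exactly the step that you (and the paper) treat as immediate: the diagonal relations $t_{i\,i}\overline{t}_{i\,i}=\overline{t}_{i\,i}t_{i\,i}=1$ of (\ref{Rtt}) are \emph{not} preserved under the stated hypotheses. On generators, $\tau(t_{i\,i})=\lambda^{-\frac{1}{2}}c_i\,t_{i\,i}$ and $\tau(\overline{t}_{i\,i})=\lambda^{-\frac{1}{2}}c_i\,\overline{t}_{i\,i}$ (whatever square roots are chosen, since $(c_i^{\frac{1}{2}})^2=c_i$), hence $\tau(t_{i\,i})\tau(\overline{t}_{i\,i})=\lambda^{-1}c_i^2$. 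This equals $1$ only if $c_i^2=\lambda$, which does \emph{not} follow from $c_ic_{i'}=\lambda$: take $c_1=2$, $c_{1'}=\frac{1}{2}$, $\lambda=1$. The failure is not hypothetical even inside this paper: the application takes $C=\sqrt{-1}J$, for which $c_i^2=-1$ while $\lambda=1$, so there $\tau(t_{i\,i})\tau(\overline{t}_{i\,i})=-1$. Consequently your claim that the diagonal relations ``follow from a direct computation with the normalization $\lambda^{-\frac{1}{2}}$ together with $c_ic_{i'}=\lambda$'' is false, and it cannot be repaired within the argument as it stands: one needs either the additional hypothesis $c_i^2=\lambda$ for all $i$, or a modified assignment such as genuine conjugation $T\mapsto C^{\frac{1}{2}}TC^{-\frac{1}{2}}$, $\overline{T}\mapsto C^{\frac{1}{2}}\overline{T}C^{-\frac{1}{2}}$, which fixes each $t_{i\,i}$ and preserves (\ref{Rtt}) and (\ref{cent}) by exactly your computations (the crossing terms of (\ref{R}) now only require $c_ic_{i'}=c_jc_{j'}$). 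Since the paper's proof silently skips the same point, this is a defect of the lemma itself as much as of your write-up; but as written, this step of your proof fails. A minor further point, common to both proofs: the square roots must be chosen so that $c_i^{\frac{1}{2}}c_{i'}^{\frac{1}{2}}$ is independent of $i$ (always arrangeable), as otherwise the two sides of the key commutation disagree in sign on the crossing terms.
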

\begin{proof}
    The only nontrivial part of this derivation is to prove the RTT relations (\ref{Rtt}) and the central relation (\ref{cent})  separately in the algebra $U_q(\mathfrak{sp}_{2n})$. It follows from the properties of $C$ and the specific form of $R$
    \begin{equation*}
        R=C_2^{-\frac{1}{2}}C_1^{-\frac{1}{2}}R C_1^{\frac{1}{2}}C_2^{\frac{1}{2}}=C_2^{\frac{1}{2}}C_1^{\frac{1}{2}}R C_1^{-\frac{1}{2}}C_2^{-\frac{1}{2}}.
    \end{equation*}
	  Then one has
	\begin{equation*}
		RC_1^{\frac{1}{2}}C_2^{\frac{1}{2}}\overline{T}_1T_2C_1^{\frac{1}{2}}C_2^{\frac{1}{2}}=C_2^{\frac{1}{2}}C_1^{\frac{1}{2}}T_2\overline{T}_1C_2^{\frac{1}{2}}C_1^{\frac{1}{2}}R,
	\end{equation*}
	and
	\begin{equation*}
		R(C_1^{\frac{1}{2}}\overline{T}_1C_1^{\frac{1}{2}})(C_2^{\frac{1}{2}}T_2C_2^{\frac{1}{2}})=(C_2^{\frac{1}{2}}T_2C_2^{\frac{1}{2}})(C_1^{\frac{1}{2}}\overline{T}_1C_1^{\frac{1}{2}})R,
	\end{equation*}
	which implies
	\begin{equation*}
		RT'_1T'_2=T'_2T'_1R,\qquad R\overline{T}'_1\overline{T}'_2=\overline{T}'_2\overline{T}'_1R,\qquad R\overline{T}'_1T'_2=T_2\overline{T}'_1R.
	\end{equation*}
	Other relations can be obtained similarly. From the equations 
	\begin{equation*}
		(C^{\frac{1}{2}})^t=(C^{\frac{1}{2}})^{-1}\lambda^{\frac{1}{2}},\quad (T')^t=\lambda^{\frac{1}{2}}(C^{\frac{1}{2}})^{-1}T^t(C^{\frac{1}{2}})^{-1},
	\end{equation*} 
	it follows that
	\begin{equation*}
		\begin{aligned}
			T'D(T')^tD^{-1}=&\lambda^{-\frac{1}{2}}C^{\frac{1}{2}}TC^{\frac{1}{2}}D\lambda^{\frac{1}{2}}(C^{\frac{1}{2}})^{-1}T^t(C^{\frac{1}{2}})^{-1}D^{-1}\\
			=&C^{\frac{1}{2}}TDT^t(C^{\frac{1}{2}})^{-1}D^{-1}=C^{\frac{1}{2}}D(C^{\frac{1}{2}})^{-1}D^{-1}=I.
		\end{aligned}
	\end{equation*}
	For $\overline{T}'D(\overline{T}')^tD^{-1}=I$, it is similar.  Thus, the proof is complete.
\end{proof}
Put $C=\sqrt{-1}J$. We define the matrix 
	$S'=J^{\frac{1}{2}}T\overline{T}^uJ^{\frac{1}{2}},$
 where $J^{\frac{1}{2}}=$diag $(\beta_1,\cdots\beta_{2n})$.
\begin{definition}
The new presentation of the quantized universal enveloping algebra, denoted by $U_q^{'tw}(\mathfrak{gl}_n)$, is the subalgebra of $U_q(\mathfrak{sp}_{2n})$ generated by the entries of matrix $S'$.
\end{definition}
\begin{proposition}\label{S'}
There exists an isomorphism from $U_q^{tw}(\mathfrak{gl}_n)$ to $U_q^{'tw}(\mathfrak{gl}_n)$.
\end{proposition}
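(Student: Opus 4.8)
The plan is to realize the desired isomorphism simply as the restriction of the automorphism $\tau$ from Lemma \ref{TT'}, specialized to the choice $C=\sqrt{-1}\,J$ that defines $S'$. First I would record the numerical consequences of this choice. Since $c_k=\sqrt{-1}\,\epsilon_k$ one has $c_ic_{i'}=(\sqrt{-1})^2\epsilon_i\epsilon_{i'}=(-1)(-1)=1$, so the scaling constant is $\lambda=1$. A convenient square root is $C^{1/2}=\mu\,J^{1/2}$, where $\mu$ is a fixed square root of $\sqrt{-1}$ (so $\mu^2=\sqrt{-1}$) and $J^{1/2}=\mathrm{diag}(\beta_1,\dots,\beta_{2n})$ is exactly the diagonal matrix entering the definition of $S'$, with $\beta_k^2=\epsilon_k$; indeed $(\mu J^{1/2})^2=\sqrt{-1}\,J=C$.

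Next I would compute the action of $\tau$ on the RTT generators. Because $C^{1/2}$ is diagonal and $\lambda=1$, Lemma \ref{TT'} gives entrywise $\tau(t_{i\,j})=(C^{1/2})_{i\,i}\,t_{i\,j}\,(C^{1/2})_{j\,j}=\sqrt{-1}\,\beta_i\beta_j\,t_{i\,j}$, and likewise $\tau(\overline t_{i\,j})=\sqrt{-1}\,\beta_i\beta_j\,\overline t_{i\,j}$. The heart of the argument is then to substitute these into $s_{i\,j}=\sum_k\epsilon_k t_{i\,k}\overline t_{j\,k}$ and watch the internal $J$ get converted into the external $J^{1/2}$-factors of $S'$. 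Writing $S'=(s'_{i\,j})$ so that $s'_{i\,j}=\beta_i\beta_j\sum_k t_{i\,k}\overline t_{j\,k}$, a direct calculation using $(\sqrt{-1})^2=-1$ and $\beta_k^2=\epsilon_k$ collapses the factor $\epsilon_k\beta_k^2=\epsilon_k^2=1$ and yields
\begin{equation*}
\tau(s_{i\,j})=\sum_k\epsilon_k\,(\sqrt{-1}\,\beta_i\beta_k t_{i\,k})(\sqrt{-1}\,\beta_j\beta_k\overline t_{j\,k})=-\beta_i\beta_j\sum_k t_{i\,k}\overline t_{j\,k}=-s'_{i\,j},
\end{equation*}
that is, $\tau(S)=-S'$ in matrix form.

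Finally I would conclude formally. Since $\tau$ is an algebra automorphism of $U_q(\mathfrak{sp}_{2n})$, its restriction to any subalgebra is injective. As $\tau$ sends each generator $s_{i\,j}$ of $U_q^{tw}(\mathfrak{gl}_n)$ to $-s'_{i\,j}$, which is a nonzero scalar multiple of a generator of $U_q^{'tw}(\mathfrak{gl}_n)$, and the elements $\{-s'_{i\,j}\}$ generate the same subalgebra as $\{s'_{i\,j}\}$, the restriction $\tau|_{U_q^{tw}(\mathfrak{gl}_n)}$ maps $U_q^{tw}(\mathfrak{gl}_n)$ onto $U_q^{'tw}(\mathfrak{gl}_n)$. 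An injective surjective algebra homomorphism is an isomorphism, which is the assertion.

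Honestly there is no deep obstacle here: the argument is essentially the single matrix identity $\tau(S)=-S'$. The only point requiring care is the bookkeeping of the fourth- and eighth-roots-of-unity scalars, so as to verify that conjugation by $C^{1/2}=\mu J^{1/2}$ genuinely absorbs the middle $J$ of $S=TJ\overline T^{u}$ into the outer $J^{1/2}$-factors of $S'=J^{1/2}T\overline T^{u}J^{1/2}$, producing a clean scalar multiple of $S'$ rather than a more complicated matrix; once $\tau(S)=-S'$ is in hand, the isomorphism is immediate.
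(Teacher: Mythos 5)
Your proposal is correct and takes essentially the same route as the paper: the paper's entire proof is the one-line observation that the automorphism $\tau$ of Lemma \ref{TT'}, specialized to $C=\sqrt{-1}J$, carries the generating matrix $S$ to a scalar multiple of $S'$, so its restriction gives the desired isomorphism. Your more careful bookkeeping in fact yields $\tau(S)=-S'$, whereas the paper writes $S'=\tau(T)J(\tau(T))^u$ (with neither the sign nor the expected $\tau(\overline{T})$ in the second factor); this discrepancy is immaterial, since $-S'$ and $S'$ generate the same subalgebra, and your version of the computation is the accurate one.
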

\begin{proof}
The isomorphism follows from Lemma \ref{TT'} that $S'=\tau(T)J(\tau(T))^u$.
\end{proof}

Under the notation $S'=(s'_{i\,j})_{2n\times 2n}$,  we have $s_{i\,j}'=\beta_i\beta_j\sum_{k=j}^it_{i\,k}\overline{t}_{j\,k}$. Using the original notations of $S$, then we have
\begin{equation*}
    \begin{aligned}
s_{i+1\,i}&=t_{i+1\,i}\overline{t}_{i\,i}+t_{i+1\,i+1}\overline{t}_{i\,i+1},\ 1\leq i\leq n-1,\\ s_{n+1\,n}&=t_{n+1\,n}\overline{t}_{n\,n}-t_{n+1\,n + 1}\overline{t}_{n\,n+1},
    \end{aligned}
\end{equation*}
while for $S'$, one has
\begin{equation*}
    \begin{aligned}
        s'_{i+1\,i}&=\beta_i\beta_{i+1}(t_{i+1\,i}\overline{t}_{i\,i}+t_{i + 1\,i+1}\overline{t}_{i\,i+1}),\ 1\leq i\leq n-1,\\ 
        s'_{n+1\,n}&=\beta_n\beta_{n+1}(t_{n+1\,n}\overline{t}_{n\,n}+t_{n+1\,n+1}\overline{t}_{n\,n+1}).
    \end{aligned}
\end{equation*}
The new presentation $U_q^{'tw}(\mathfrak{gl}_n)$ provides the foundation for establishing an isomorphism between $U_q^{'tw}(\mathfrak{gl}_n)$ and $\imath$quantum group $\mathcal{U}^{\imath}$, as discussed in the following text.
\begin{remark}
M. Noumi provided a list of constant solutions, denoted by $C$,  of the reflection equation for the corresponding $R$-matrix, which was used to construct a coideal algebra corresponding to the symmetric pairs in \cite{N2}. Moreover, it was shown in \cite{M1} that a family of subalgebras in $U$, parametrized by $C$, are isomorphic to each other as abstract algebras under the transformation $S\to CSC$ in type $A$. 
\end{remark}

\section{The proof of the theorem \ref{pbw}}
In this section, we will first prove a weak form of the PBW theorem \ref{pbw}, which asserts that under the lexicographic order, the ordered monomials of the form 
\begin{equation*}
\prod_{i=1,\cdots,2n}^{\to}s_{i\,1}^{k_{i\,1}}s_{i\,2}^{k_{i\,2}}\cdots s_{i\,i'}^{k_{i\,i'}}
\end{equation*}
with nonnegative powers $k_{i\,j}$ linearly span the abstract associative algebra $\mathcal{S}$ with $2n^2 + n$ generators $s_{i\,j}$, $i\geq j$, $i,j = 1,\cdots,2n$.
The defining relations can be expressed in terms of the matrix
$S=(s_{i\,j})$ by the relation (\ref{sij})--(\ref{Scentre}).
It should be noted that in this section, we denote the matrix 
 $TJ\overline{T}^u$ by $\widetilde{S}$ and refer to its matrix elements as $\widetilde{s}_{i\,j}$ exclusively for this context.
 Let 
 \begin{align*}
 \Omega_1&=\{s_{k_1\,l_1}s_{k_2\,l_2}\cdots s_{k_p\,l_p},\ l_m'\geq k_m > l_m,\ m = 1,\cdots,p\},\\
 \Omega_2&=\{s_{i_1\,j_1}s_{i_2\,j_2}\cdots s_{i_p\,j_p},\ i_m > j_m > i_m',\ m = 1,\cdots,p\}
 \end{align*}
 denote the sets of monomials generated by $S$.
In fact, although $\mathcal{S}$ has $2n^2 + n$ generators, 
we will prove in Lemma \ref{sijspan}  that the generators
$s_{i\,j}\in \Omega_2$ can be spanned by ordered monomials in $\Omega_1$.
Furthermore, since $s_{i\,i}$ are all scalar, 
the algebra $\mathcal{S}$ effectively has only
$n^2$ generators, namely $s_{k\,l}$ for $l'\geq k > l$. 

\begin{lemma} \label{sijspan}
The generator $s_{i\,j}\in\Omega_2$ can be expressed as a linear combination of ordered monomials of $\Omega_1$ of the form 
\begin{equation*}
    s_{i_1\,a_1} s_{i_2\,a_2}\cdots s_{i_p\,a_p},
\end{equation*}
    where $i'< i_1 < i_2 < \cdots < i_p\leq j'$.
\end{lemma}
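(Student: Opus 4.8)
The plan is to isolate $s_{i\,j}$ from the central relation and then induct on the row index $i$. For a generator $s_{i\,j}\in\Omega_2$ we have $i>j>i'$, in particular $i'<j$, so the term $\delta_{j\le i'\le i}\,q^{\overline j+\overline i}\,\overline s_{i\,i'}s_{i'\,j}$ in (\ref{scenspecific2}) vanishes and the excluded index $k=i'$ never occurs in the range $j<k<i$. Since $\epsilon_{i'}^2=1$, relation (\ref{scenspecific2}) rearranges to
\begin{equation*}
s_{i\,j}=-\epsilon_{i'}q^{\overline i-\overline j}\Big(q\epsilon_i\,s_{j'\,i'}+\sum_{k=j+1}^{i-1}q^{\overline j-\overline k+1}\epsilon_i\epsilon_k\,s_{k'\,i'}s_{k\,j}\Big).
\end{equation*}
Every leading factor here, namely $s_{j'\,i'}$ and the $s_{k'\,i'}$, lies in $\Omega_1$ with row index in $(i',j']$: indeed $j<i$ and $k<i$ give $j'>i'$ and $k'>i'$, while $i+j\ge 2n+2$ forces $j'+i'\le 2n$ and $k'+i'\le 2n<2n+1$. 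Hence the only part of the right-hand side that still needs reducing sits in the second factors $s_{k\,j}$, all of whose row indices $k$ are strictly smaller than $i$.

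I would run the induction on the row index $i$, assuming the statement for every $\Omega_2$-generator with row index $<i$. Each product $s_{k'\,i'}s_{k\,j}$ (and the lone $s_{j'\,i'}$) splits into three cases according to $s_{k\,j}$. If $s_{k\,j}\in\Omega_2$, equivalently $k>j'$ and hence $k'<j$, the induction hypothesis rewrites $s_{k\,j}$ as a sum of ordered $\Omega_1$-monomials with row indices in $(k',j']$; prepending $s_{k'\,i'}$, whose row index is exactly $k'>i'$, yields ordered monomials with row indices in $(i',j']$ with no reordering required. If $s_{k\,j}\in\Omega_1$ with $k\ge n+1$, then $k'<k$ and $s_{k'\,i'}s_{k\,j}$ is already ordered, with row indices $i'<k'<k\le j'$. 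The remaining case is $s_{k\,j}\in\Omega_1$ with $j<k\le n$, where $k'>k$ and the product $s_{k'\,i'}s_{k\,j}$ is out of order.

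To straighten such a product I would use the reflection relation (\ref{sre}) with $(i,a)=(k',i')$ and $(j,b)=(k,j)$: its leading terms express $s_{k'\,i'}s_{k\,j}$ as a nonzero scalar multiple of the correctly ordered product $s_{k\,j}s_{k'\,i'}$, whose row indices $k<k'$ again lie in $(i',j']$, plus the collection of correction terms on the right of (\ref{sre}). The genuine obstacle is the control of these corrections: one must verify that after the substitution $(i,a)=(k',i')$, $(j,b)=(k,j)$ each surviving correction is a product of generators that are either already in $\Omega_1$ or are $\Omega_2$-generators of strictly smaller row index (to which the outer induction applies), always with row indices confined to $(i',j']$, and that the straightening terminates. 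I would organise this by a secondary induction relative to a monomial well-order — for instance lexicographic in the row indices, refined by the reverse order on the columns — chosen so that every correction term in (\ref{sre}) is strictly below $s_{k'\,i'}s_{k\,j}$; the reflection relation then acts as a confluent rewriting rule and a Diamond-Lemma-style argument forces repeated application to terminate in ordered $\Omega_1$-monomials. It is this bookkeeping of the reflection-equation corrections, rather than the routine central-relation step, that constitutes the hard part.
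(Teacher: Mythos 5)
Your overall route is the paper's route: solve the central relation (\ref{scenspecific2}) for $s_{i\,j}$ (your rearrangement is correct, as is the check that $s_{j'\,i'}$ and the $s_{k'\,i'}$ lie in $\Omega_1$ with rows in $(i',j']$), then split according to whether $s_{k\,j}$ is in $\Omega_2$, in $\Omega_1$ with $k>n$, or in $\Omega_1$ with $k\le n$. Your induction on the row index $i$ works just as well as the paper's induction on $m=i-j$ (in the $\Omega_2$ case the inner factor has row index $k<i$ and column-gap $k-j<i-j$, so either hypothesis applies), and your first two cases coincide with the paper's.

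The gap is that you stop exactly at the crux: for $j<k\le n$ you never determine which correction terms of (\ref{sre}) actually survive under the substitution $(i,a)=(k',i')$, $(j,b)=(k,j)$, and instead postulate a monomial well-order making all corrections smaller, to be handled by a Diamond-Lemma argument. That postulate is doing all the work, and it is not obvious for your proposed order (the surviving corrections have \emph{larger} leading row index, so "lexicographic in the row indices" would rank them above, not below, the term being rewritten). The paper closes this case with a short explicit computation, which is much lighter than confluence machinery: under that substitution the factors $\delta_{j\,i}$, $\delta_{i'\,k}$, $\delta_{j\,k}$, $\delta_{i'\,k'}$ all vanish, the terms weighted by $\delta_{b<a}-\delta_{i<j}$ die because $j>i'$ and $k'>k$, and every other candidate contains a factor $s_{k\,k'}$ or $s_{i'\,j}$, which is zero by (\ref{sij}); only the term with $\delta_{i\,j'}=\delta_{k'\,k'}=1$ survives, giving
\begin{equation*}
s_{k'\,i'}s_{k\,j}=q\,s_{k\,j}s_{k'\,i'}+(q-q^{-1})\sum_{t'=k'+1}^{j'}q^{\overline{t}+\overline{k}+1}\,s_{t'\,i'}\,s_{t\,j}.
\end{equation*}
The first term is ordered with rows in $(i',j']$, and each correction $s_{t'\,i'}s_{t\,j}$ has the same shape as the original product but with strictly larger leading row $t'>k'$, bounded by $j'$ (at $t'=j'$ the second factor degenerates to the scalar $s_{j\,j}=\epsilon_j$). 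So a one-parameter induction on $t'$ terminates, and no abstract rewriting argument is needed. Since you explicitly defer this verification and it is the substance of the hardest case, your proposal as written is an outline of the paper's proof; the computation above is what is needed to complete it.
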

\begin{proof}
Due to (\ref{scenspecific2}), for $i > j > i'$, $s_{i\,j}\in\Omega_2$, it follows that 
\begin{equation}\label{sijs'}
    q^{\overline{j}-\overline{i}}s_{i\,j}=qs_{j'\,i'}+\sum_{k = j + 1}^{i - 1}q^{\overline{j}-\overline{k}+1}\epsilon_{k}s_{k'\,i'}s_{k\,j}.
\end{equation}
It is obvious that $s_{j'\,i'}, s_{k'\,i'}\in\Omega_1$. 
Let $m = i - j$,  we proceed by the induction on $m$ to prove this lemma.
If $m = 1$, we have $q^{\overline{j}-\overline{i}}s_{i\,j}=qs_{j'\,i'}$, so the assertion holds.
Suppose that the claim holds for $m < l$,, where $l$ is some integer.
In equation (\ref{sijs'}), $s_{k\,j}\in \Omega_2 $ when $k > j'$, and  $s_{k\,j}\in \Omega_1$ when $k\leq j'$. Therefore, we need to consider two cases.\\
{\bf Case 1:} $s_{k\,j}\in \Omega_2$, since $k - j < i - j = l$, $s_{k\,j}$ can be spanned by the ordered monomials $s_{k_1\,j_1}\cdots s_{k_p\,j_p}$ of $\Omega_1$ with $k'<k_1<\cdots k_p\leq j'$ by assumption.
So, $s_{k'\,i'}s_{k\,j}$ can be spanned by the monomials $s_{k'\,i'}s_{k_1\,j_1}\cdots s_{k_p\,j_p}$ of $\Omega_1$ where $i'<k'<k_1<\cdots k_p\leq j'$.\\
{\bf Case 2:} When $k\leq j'$, $s_{k\,j}\in \Omega_1$.
If $k > n$, then  $k'<k$ and the assertion holds.
If $k\leq n$, we have $k < k'$.
It follows from (\ref{sre}) that
\begin{align*}
s_{k'\,i'}s_{k\,j}=qs_{k\,j}s_{k'\,i'}+(q-q^{-1})\sum_{t'=k'+ 1}^{j'}q^{\overline{t}+\overline{k}+1}s_{t'\,i'}s_{t\,j}. 
\end{align*}
Hence, $s_{k'\,i'}s_{k\,j}$ 
  can be expressed as a linear combination of monomials $s_{t_p\,j}s_{t'_p\,i'}$, where $j < t_p < t_p'<j$, by induction on $t'$.
Note that when $j\geq n$, the second case does not occur.
\end{proof}

\begin{lemma} \label{weakp}
			The ordered monomials of the form 
			\begin{equation} \label{sia1}
			s_{i_1\,a_1}^{k_1} s_{i_2\,a_2}^{k_2}\cdots s_{i_p\,a_p}^{k_p},
			\end{equation}
			where $k_{i}$ are nonnegative integers, linearly span the abstract associative algebra $\mathcal{S}$. Here $a_m'\geq  i_m> a_m$ for $m=1,\cdots,p$, and the indices satisfy $i_1\leq i_2\leq \cdots \leq i_p$.
\end{lemma}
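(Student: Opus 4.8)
The plan is to prove this spanning statement by a straightening argument built on the reflection relation (\ref{sre}): given an arbitrary word in the generators, I repeatedly push factors of smaller first index to the left and show that the process terminates in a linear combination of the asserted ordered monomials. First I would reduce to words whose letters all lie in $\Omega_1$. Every element of $\mathcal{S}$ is a linear combination of words in the $s_{i\,j}$ with $i\geq j$; each diagonal $s_{i\,i}$ is the scalar $\epsilon_i$ by (\ref{sii}), and by Lemma \ref{sijspan} every $\Omega_2$-generator is a linear combination of ordered monomials all of whose factors lie in $\Omega_1$. Substituting, it suffices to rewrite an arbitrary word in the $\Omega_1$-generators $s_{i\,a}$ (so $a<i\leq a'$) as a linear combination of words with non-decreasing first indices $i_1\leq\cdots\leq i_p$.

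The straightening step treats an adjacent descent $s_{i\,a}s_{j\,b}$ with $i>j$: relation (\ref{sre}) rewrites it as a scalar multiple of the sorted product $s_{j\,b}s_{i\,a}$ plus quadratic corrections. Under the standing constraints $a<i\leq a'$, $b<j\leq b'$ and $i>j$, several correction families vanish identically; for instance all terms carrying $\delta_{i<j}$ disappear, and the $\delta_{a\,j'}$ family drops out because $a=j'$ forces $i\leq a'=j$, contradicting $i>j$. Of the survivors, some are already sorted (such as $s_{j\,a}s_{i\,b}$ and $s_{j\,k'}s_{i\,k}$, whose first indices satisfy $j<i$), and the remaining ones are again quadratic; any factor among them that lands in $\Omega_2$ is immediately re-expanded through Lemma \ref{sijspan}.

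The main obstacle is termination, and here the difficulty is genuine. The $\delta_{a<j}$ correction $s_{i\,j}s_{a\,b}$ lowers the first indices of the factors while raising their second indices, whereas the $\delta_{i\,j'}$ corrections $\sum_{k>i}s_{k\,a}s_{k'\,b}$ and $\sum_{k>i}s_{k\,k'}s_{a\,b}$, forced by the central relation (\ref{Scentre})/(\ref{nScen}), do exactly the reverse: they introduce anti-diagonal generators $s_{k\,k'}$ whose first index $k>i$ is larger than anything present before. A direct check shows that these two families move oppositely under every additive statistic $\sum_m(\alpha\,i_m+\beta\,a_m)$, so no such statistic can strictly decrease under both; moreover the word length is not preserved either, since the substitutions of Lemma \ref{sijspan} mix degrees.

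The induction must therefore run on a more subtle, non-additive well-founded order, whose construction is the heart of the argument. I would build it from the finiteness of the index range $1\leq i_m\leq 2n$ together with the constraint $k+k'=2n+1$ satisfied by the anti-diagonal generators, so as to bound the escalation of first indices caused by the $\delta_{i\,j'}$ corrections and prevent it from recurring indefinitely. Verifying that every correction family in (\ref{sre}), as well as each re-expansion via Lemma \ref{sijspan}, strictly decreases this order is the crux of the proof; once it is established, no word admits an infinite chain of straightening moves, and the terminal words are precisely linear combinations of monomials with non-decreasing first indices, which is the assertion.
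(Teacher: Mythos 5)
Your overall skeleton matches the paper's: reduce to words in $\Omega_1$ via Lemma \ref{sijspan} and the scalar relations (\ref{sii}), straighten adjacent descents with (\ref{sre}), and you correctly isolate the problematic correction family $\delta_{i\,j'}\sum_{k>i}s_{k\,a}s_{k'\,b}$. But at exactly that point the proposal stops being a proof. You assert that the induction ``must'' run on a subtle non-additive well-founded order, you never construct that order, and you explicitly defer the verification that every rewriting move decreases it --- which you yourself call the crux. That is a genuine gap, not a detail: your own observation that the two correction families move oppositely under every additive statistic is evidence that such an order is hard (perhaps impossible) to produce by pure rewriting, and nothing in the proposal shows that one exists.

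The paper does not need such an order, because it removes the escalation algebraically rather than taming it combinatorially. It inducts on the additive weight $\omega=i_1+\cdots+i_p$ (sum of first indices); the escalating terms arise only in the configuration $j=i'$ with $a>b$, and there the central relation (\ref{Scentre}), rewritten as (\ref{nScen}) and then (\ref{recen}) using Lemma \ref{overss}, yields an identity converting $\sum_{k=i+1}^{b'}q^{\overline{k}+\delta_{a\,k'}}\epsilon_k\, s_{k\,a}s_{k'\,b}$ into a sum $\sum_{k=a+1}^{i}q^{\overline{k}}s_{k\,a}s_{k'\,b}$ plus a linear term in $s_{a'\,b}$, modulo terms already controlled by the weight induction; since the first indices are now bounded by $i$, the same descending induction on $k$ that handles the case $a\leq b$ finishes the argument. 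Note also that you misread the role of the central relation: the escalating corrections are forced by the reflection relation (\ref{sre}) alone, and (\ref{Scentre}) is not their source but precisely the tool that kills them. Without this step (or an actual construction and verification of your proposed order), termination is not established and the spanning statement is not proved.
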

\begin{proof} 
Given a monomial 
	\begin{equation} \label{monos}
		s_{j_1\,b_1}\cdots s_{j_p\,b_p},
	\end{equation}
we define its length as $p$ and its weight $\omega$ as $\omega=j_1+\cdots + j_p$.
For a monomial of length $p$, the weight can range from $p$ to $2pn$. 
We will proceed by induction on $\omega$.
Directly proving that all elements $s_{i\,j}$, $i\geq j$, can be linearly spanned by ordered monomials would be quite challenging. 
However, in Lemma \ref{sijspan}, we have already established  that the generating matrix can be written as $S=(s_{i\,j}),$ where $j'\geq i>j$.
Therefore, we restrict our attention to the case where the monomials belong to $\Omega_1$, and we will prove their ordering.
From equation (\ref{sre}), we obtain the following equality modulo products of weight $\omega$ less than $i+j$, for $i >j,\ a'\geq i> a,\ b'\geq j> b$,
	\begin{equation} \label{sijm}
        \begin{aligned} 
            q^{\delta_{i\,j}-\delta_{i\,j'}+\delta_{a\,j}-\delta_{a\,j'}}s_{i\,a}s_{j\,b}
            \equiv \ &q^{\delta_{i\,b}-\delta_{i\,b'}+\delta_{a\,b}-\delta_{a\,b'}}s_{j\,b}s_{i\,a}\\
            &+(q - q^{-1})q^{-\delta_{i\,a'}}\delta_{b < a}s_{j\,a}s_{i\,b}\\
            &-(q - q^{-1})q^{\delta_{i\,j}-\delta_{i\,j'}}\delta_{j > a,i'}s_{i\,j}s_{a\,b}\\
            &-(q - q^{-1})^2\delta_{a\,i'}\sum_{k = i'+1}^{b',j}q^{\overline{k}-\overline{i'}}\epsilon_i\epsilon_{k'}\delta_{b < i'}\delta_{j > k'}s_{j\,k}s_{k'\,b}\\
            &-(q - q^{-1})\delta_{b\,i'}\sum_{k = i'+1}^{j,a'}q^{\overline{i}-\overline{k'}}q^{\delta_{a\,i'}-\delta_{a\,i}}\epsilon_i\epsilon_{k'}\delta_{j > k'}s_{j\,k}s_{k'\,a}\\
            &+(q - q^{-1})\delta_{i\,j'}\sum_{k = i + 1}^{b'}q^{\overline{k}-\overline{i}}q^{\delta_{a\,k'}-\delta_{a\,k}}\epsilon_i\epsilon_{k}s_{k\,a}s_{k'\,b}.\\
        \end{aligned} 
\end{equation}
While the weight $\omega$ of some monomials on the right-hand side is less than $i+j$, these monomials belong to $\Omega_2$. 
 When we express these monomials in terms of monomials from $\Omega_1$ using Lemma \ref{sijspan}, their weights will exceed $i+j$.
 
 From equation (\ref{sre}) and lemma \ref{sijspan}, with $b<a\leq i <j,j'<i'\leq a'<b'$, we have 
	\begin{equation*}
    \begin{aligned}
        q^{-\delta_{i\,a'}}s_{i\,j}s_{a\,b}&= s_{a\,b}s_{i\,j}+(q - q^{-1})\delta_{i\,a'}\sum_{k = i + 1}^{b'}s_{k\,j}s_{k'\,b}\\
        &=\operatorname{span}_{\C(q)}\{s_{a\,b}s_{i_1\,j_1} s_{i_2\,j_2}\cdots s_{i_p\,j_p}\}+\delta_{i\,a'}\sum_{k = i + 1}^{b'}s_{k\,j}s_{k'\,b}.
    \end{aligned}
\end{equation*}
It is evident that the monomial $s_{a\,b}s_{i_1\,j_1} s_{i_2\,j_2}\cdots s_{i_p\,j_p}$ has the ordering $a\leq i'< i_1< i_2< \cdots <i_p\leq j'$.
The item $\delta_{i\,a'}\sum_{k=i+1}^{b'}s_{k\,j}s_{k'\,b}$ can be expressed as a linear combination of ordered monomials in $\Omega_1$ by induction on $k$, based on the equation in (\ref{sre}) given by
\begin{align*}
q^{-1}s_{k\,j}s_{k'\,b}=s_{k'\,b}s_{k\,j}+(q-q^{-1})\sum_{t=k+1}^{b'}q^{\overline{t}-\overline{k}}s_{t\,j}s_{t'\,b}.
\end{align*}

For $\delta_{a\,i'}\sum_{k=i'+1}\delta_{b<i'}\delta_{j>k'}s_{j\,k}s_{k'\,b}\in \Omega_2$, it follows from Lemma \ref{sijspan} that
		\begin{equation*}
			\begin{aligned}
				s_{j\,k}s_{k'\,b}= \operatorname{span}_{\C(q)}\{s_{j_1\,k_1} s_{j_2\,k_2}\cdots s_{j_p\,k_p}s_{k'\,b}\},
			\end{aligned}
		\end{equation*}
		where  $j'< j_1< j_2< \cdots < j_p\leq k'$. 
        
       A similar argument applies to the case of $\delta_{b\,i'}\sum_{k=i'+1}^{j\,a'}q^{\overline{i}-\overline{k'}}q^{\delta_{a\,i'}-\delta_{a\,i}}\epsilon_i\epsilon_{k'}\delta_{j>k'}s_{j\,k}s_{k'\,a}$.
       
For	$\delta_{i\,j'}\sum_{k=i+1}^{b'}q^{\overline{k}+\delta_{a\,k'}}\epsilon_{k}s_{k\,a}s_{k'\,b}$, we consider two cases.\\
{\bf Case 1:} If $a\leq b$, we have $a\leq b\leq k'<j=i'<i=j'<k<b'\leq a'$, which implies $s_{k\,a}s_{k'\,b}\in\Omega_1$. From equation (\ref{sre}), we deduce
\begin{equation*}
    q^{-1}s_{k\,a}s_{k'\,b}\equiv q^{-\delta_{k\,b'}+\delta{a\,b}}s_{k'\,b}s_{k\,a}+(q - q^{-1})\sum_{t = k + 1}^{b'}q^{\overline{t}-\overline{k}}q^{\delta_{a\,t'}-\delta_{a\,t}}s_{t\,a}s_{t'\,b}.
\end{equation*}
This case can be verified by induction on $k$.\\
{\bf Case 2:} If $a> b$, we deduce $b<a\leq j=i'<i=j'\leq a'<b'$. 
Since $i<k\leq b'$ for $k>a'$, we have $s_{k\,a}\in \Omega_2$.
Rewriting the central relation (\ref{nScen}) yields
		\begin{equation} \label{olss}
        (\sum_{t=a}^{i}+\sum_{t=i+1}^{b'})q^{\overline{t}}\overline{s}_{a'\,t'}s_{t'\,b}=0.
		\end{equation}
By Lemma \ref{overss}, equation (\ref{olss}) becomes
		\begin{equation}\label{recen}
			(\sum_{t=a+1}^{i}+\sum_{t=i+1,t\neq a'}^{b'})q^{\overline{t}}\epsilon_{a}\epsilon_{t}s_{t\,a}s_{t'\,b}+q^{-1-\overline{a}}\overline{s}_{a'\,a}s_{a\,b}+q^{-1+\overline{a}}\epsilon_{a'}s_{a'\,b}=0.
		\end{equation}
Then, equation (\ref{recen}) leads to 
		\begin{equation*}
			\begin{aligned}
\sum_{k=i+1}^{b'}q^{\overline{k}+\delta_{a\,k'}}\epsilon_{k}s_{k\,a}s_{k'\,b}
=&q^{\overline{a'}+1}s_{a'\,a}s_{a\,b}-\sum_{k=a+1}^{i}q^{\overline{k}}\epsilon_{a}\epsilon_{k}s_{k\,a}s_{k'\,b}-q^{-1-\overline{a}}\overline{s}_{a'\,a}s_{a\,b}-q^{-1+\overline{a}}\epsilon_{a'}s_{a'\,b}\\
=&q^{\overline{a'}+1}s_{a'\,a}s_{a\,b}+\sum_{k=a+1}^{i}q^{\overline{k}}s_{k\,a}s_{k'\,b}-q^{-1+\overline{a}}\epsilon_{a'}s_{a'\,b}-q^{1-\overline{a}}s_{a'\,a}s_{a\,b}\\
&-\sum_{m=a+1}^{n}(-1)^{k_m}q^{k_m}s_{m'\,m}s_{a\,b}\equiv\sum_{k=a+1}^{i}q^{\overline{k}}s_{k\,a}s_{k'\,b}-q^{-1+\overline{a}}\epsilon_{a'}s_{a'\,b}.
			\end{aligned}
		\end{equation*}
It follows that $\sum_{k=a+1}^{i-1}q^{\overline{k}}\epsilon_{a}\epsilon_{k}s_{k\,a}s_{k'\,b}\in\Omega_1$.
By induction on $k$, as in the previous case, the assertion holds.
The proof allows us to express the terms in equation (\ref{monos}), which belong to $\Omega_1$, modulo monomials of weight $\omega$ less than $i+j$, as a linear combination of monomials $s_{i_1\,a_1}s_{i_2\,a_2}\cdots s_{i_p\,a_p}\in \Omega_1$ such that $i_1\leq i_2\leq\cdots\leq i_p$.
		\end{proof}
	
\begin{lemma}\label{lem:pbw}
	Consider the abstract associative algebra $\mathcal{S}$ with $n^2$ generators $s_{i\,j}$, where $i,j=1,\cdots,2n$, $j\geq i> j'$.
    The algebra is linearly spanned by the ordered monomials of the form 
		\begin{equation*}
\prod_{i=1,\cdots,2n}^{\to}s_{i\,1}^{k_{i\,1}}s_{i\,2}^{k_{i\,2}}\cdots s_{i\,i'}^{k_{i\,i'}},
		\end{equation*}
		where the powers $k_{i\,j}$ are nonnegative integers. 
	\end{lemma}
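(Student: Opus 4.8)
The plan is to bootstrap from Lemma \ref{weakp}, which already spans $\mathcal{S}$ by monomials in $\Omega_1$ whose row indices are non-decreasing, $i_1\le i_2\le\cdots\le i_p$. What remains is to sort, inside each maximal block of generators sharing a fixed row index $i$, the column indices into the increasing order $1,2,\dots,i'$ prescribed by the statement (recall $s_{i\,j}=0$ for $j>i$ and $s_{i\,i}=\epsilon_i$ is scalar, so only columns $j<i$ genuinely survive). I would run a double induction: the outer induction is on the weight $\omega$, exactly as in Lemma \ref{weakp}, and the inner induction is on the number of column-inversions inside the equal-row blocks, that is, the number of adjacent factors $s_{i\,a}s_{i\,b}$ occurring with $a>b$.

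First I would extract the intra-row exchange relation by specializing the reflection relation (\ref{sre}) to equal row indices $j=i$, producing the $j=i$ analogue of (\ref{sijm}). Since $s_{i\,a},s_{i\,b}\in\Omega_1$ force $a,b<i$, and since $i\ne i'$, most Kronecker deltas collapse; crucially the term $s_{j\,a}s_{i\,b}$ becomes a scalar multiple of $s_{i\,a}s_{i\,b}$ itself and is absorbed into the left-hand side, while $s_{i\,i}s_{a\,b}=\epsilon_i s_{a\,b}$ is scalar and drops the length by one. Collecting the $s_{i\,a}s_{i\,b}$ contributions (whose total coefficient is a nonzero power of $q$) then yields, modulo monomials of weight strictly less than $2i$, a relation of the shape
\begin{equation*}
s_{i\,a}s_{i\,b}\ \equiv\ q^{\,c}\,s_{i\,b}s_{i\,a}\ +\ \gamma\,s_{a\,b}\ +\ \sum_{k}\alpha_k\,s_{i\,k}s_{k'\,b}\ +\ \sum_{k}\beta_k\,s_{i\,k}s_{k'\,a},
\end{equation*}
with $c\in\mathbb{Z}$ and $\gamma,\alpha_k,\beta_k\in\C(q)$, which exchanges the two columns at the cost of the displayed corrections.

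Next I would classify these corrections against the chosen well-order. The leading term $s_{i\,b}s_{i\,a}$ stays in row $i$ but strictly lowers the inner inversion count, so it is covered by the inner induction hypothesis. The scalar term $\gamma s_{a\,b}$ has shorter length and smaller weight, and after re-sorting by Lemma \ref{weakp} it falls under the outer induction. The mixed-row sums $s_{i\,k}s_{k'\,b}$ and $s_{i\,k}s_{k'\,a}$ carry a second row index $k'$: those already in $\Omega_1$ have weight $i+k'<2i$ and are handled by the outer induction directly, while those lying in $\Omega_2$ are rewritten through Lemma \ref{sijspan}, which---exactly as in Lemma \ref{weakp}---returns row-ordered monomials of $\Omega_1$. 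When $a>b$ one may in addition have to invoke the central relation in the form (\ref{nScen})/(\ref{recen}) to eliminate the surviving $\Omega_2$ contributions, just as at the end of the proof of Lemma \ref{weakp}.

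The main obstacle, and the point demanding the most careful bookkeeping, is to certify that this process terminates, i.e. that every correction is strictly smaller than $s_{i\,a}s_{i\,b}$ in the lexicographic order on the pair (weight, inner-inversion number). The genuine difficulty is that the mixed-row sums temporarily break the non-decreasing row pattern produced by Lemma \ref{weakp}, and the $\Omega_2\to\Omega_1$ passage returns monomials that are row-ordered but not yet column-ordered, possibly at a different weight; one must re-apply Lemma \ref{weakp} to restore the row order and then re-enter the inner induction, and it is precisely the fact (established in Lemma \ref{weakp}) that the $\Omega_2\to\Omega_1$ rewriting directly yields ordered monomials that prevents an infinite regress. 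Once termination is secured, iterating the exchange relation sorts every equal-row block into increasing column order; combined with the non-decreasing row order from Lemma \ref{weakp}, this produces exactly the ordered monomials $\prod_{i=1,\cdots,2n}^{\to}s_{i\,1}^{k_{i\,1}}\cdots s_{i\,i'}^{k_{i\,i'}}$ claimed in the lemma.
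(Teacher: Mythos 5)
Your proposal is correct and follows essentially the same route as the paper: starting from Lemma \ref{weakp}, you specialize the reflection relation (\ref{sre})/(\ref{sijm}) to equal row indices, absorb the $s_{i\,a}s_{i\,b}$ self-term into the left-hand side, and sort columns within each equal-row block modulo lower-weight corrections, with the $a=i'$ correction terms $s_{i\,k}s_{k'\,b}$ handled through Lemma \ref{sijspan} exactly as the paper does. Your explicit double induction on (weight, inversion count), and your observation that the $\Omega_2\to\Omega_1$ rewriting returning already row-ordered monomials is what prevents infinite regress, merely makes precise what the paper leaves implicit in its phrase ``by repeatedly using this relation.''
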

    \begin{proof}

Following the proof of Lemma \ref{weakp}, we consider a submonomial $s_{i\,c_1}s_{i\,c_1}\cdots s_{i\,c_r}$ from equation (\ref{sia1}), which contains generators with the same first index. 
By equation (\ref{sijm}), we obtain the following equality, modulo products of weight less than $2i$, which belongs to $\Omega_1$ for $a'\geq i > a > b,$
    \begin{equation*}
        \begin{aligned}
            q^{1+\delta_{a\,i}-\delta_{a\,i'}}s_{i\,a}s_{i\,b}
            \equiv \  &q^{\delta_{i\,b}-\delta_{i\,b'}-\delta_{a\,b'}}s_{i\,b}s_{i\,a}
            +(q - q^{-1})q^{-\delta_{i\,a'}}s_{i\,a}s_{i\,b}
            \\
            &-(q - q^{-1})^2\delta_{a\,i'}\sum_{k = i'+1}^{i,b'}q^{\overline{k}-\overline{i'}}\epsilon_i\epsilon_{k'}\delta_{i > k'}s_{i\,k}s_{k'\,b}.
        \end{aligned}    
    \end{equation*}
When $i\neq a'$, the lemma obviously holds.
Note that when $\delta_{i > k'}$, one has $s_{i\,k}\in\Omega_2$.
Then, for $i = a'$, it follows that
\begin{equation*}
        \begin{aligned}
q^{-2}s_{i\,a}s_{i\,b}\equiv s_{i\,b}s_{i\,a}-(q - q^{-1})^2\sum_{k = i'+1}^{i,b'}q^{\overline{k}-\overline{i'}}\epsilon_i\epsilon_{k'}\delta_{i > k'}s_{i\,k}s_{k'\,b}.
    \end{aligned}    
    \end{equation*}
From lemma \ref{sijspan}, we deduce
    \begin{equation*}
        s_{i\,k}s_{k'\,b}=\operatorname{span}_{\C(q)}\{ s_{i_1\,k_1} s_{i_2\,k_2}\cdots s_{i_p\,k_p}s_{k'\,b}\},
    \end{equation*}
    where $i'< i_1< i_2<\cdots < i_p\leq k'$. 
Hence, we have discussed the case when $i_p = k'$ and $a_p > b$.
Owing to equation (\ref{sre}), it follows that 
    $$q^{-1}s_{k'\,a_p}s_{k'\,b}= s_{k'\,b}s_{k'\,a_p}+(q - q^{-1})q\epsilon_{k}s_{a_p\,b}.$$
By repeatedly using this relation, we can transform the submonomial to the required form.
\end{proof}

Having established the weak PBW theorem above, we now proceed to prove the linear independence of the monomials. 
Before that, we recall the $\mathbb{A}$-form of the quantum algebra.

Consider $q$ as a formal variable and $U_q(\mathfrak{sp}_{2n})$ as an algebra over $\C(q)$. Set $\mathbb{A}=\C[q,q^{-1}]$. Denoted $U_{\mathbb{A}}$ as the $\mathbb{A}$-subalgebra of $U_q(\mathfrak{sp}_{2n})$ generated by the elements $\tau_{i\,j}$ and $\overline{\tau}_{i\,j}$, which are defined by 
\begin{equation}\label{tau1}
    \tau_{i\,j} =\dfrac{t_{i\,j}}{q_i - q_{i}^{-1}}\quad \text{for}\quad i > j,\qquad  \overline{\tau}_{i\,j} =\dfrac{\overline{t}_{i\,j}}{q_i - q_{i}^{-1}}\quad \text{for}\quad i < j,
\end{equation}
and
\begin{equation}\label{tau2}
    \tau_{i\,i} =\dfrac{t_{i\,i}-1}{q - 1},\qquad \overline{\tau}_{i\,i} =\dfrac{\overline{t}_{i\,i}-1}{q - 1}.
\end{equation}
Then, there exits a natural isomorphism 
$$U_{\mathbb{A}}\otimes_{\mathbb{A}}\C \cong U(\mathfrak{sp}_{2n}),$$
where the action of $\mathbb{A}$ on $\C$ is defined via the evaluation $q = 1$, as discussed in [\citenum{VC}, Sec 9.2]. Note that $\tau_{i\,j}$ and $\overline{\tau}_{i\,j}$  specialize to the elements $F_{i\,j}$ and $-F_{i\,j}$ respectively, which are the generators of the symplectic Lie algebra $\mathfrak{sp}_{2n}$.
Given a subalgebra $V$ of $ U_q(\mathfrak{sp}_{2n})$, we define $V_{\mathbb{A}}=V\cap U_{\mathbb{A}}$. 
Following  [\citenum{L2}, Sec 1], we claim that $V$ specializes to a subalgebra $\overline{V}$ of $ U(\mathfrak{sp}_{2n})$ if $V_{\mathbb{A}}\otimes_{\mathbb{A}}\C \cong \overline{V}$. 
The method for proving the PBW theorem of twisted quantized enveloping algebra was introduced by A. Molev et al. in \cite{M1}.

\begin{theorem}\label{UPbw}
The abstract associative algebra $\mathcal{S}$, generated by $n^2$ elements $s_{i\,j},\ i,j = 1,\cdots, 2n$, $j'\geq i > j$, with defining relations (\ref{sij})--(\ref{Scentre}), is isomorphic to $U_q^{tw}(\mathfrak{gl}_n)$. The monomials 
\begin{equation*}
s_{2\,1}^{k_{2\,1}}s_{3\,1}^{k_{3\,1}}s_{3\,2}^{k_{3\,2}}\cdots s_{n\,1}^{k_{n\,1}}\cdots s_{n,n - 1}^{k_{n,n - 1}}s_{n + 1\,1}^{k_{n + 1\,1}}\cdots s_{n + 1\,n}^{k_{n + 1\,n}}\cdots s_{2n\,1}^{k_{2n\,1}},
\end{equation*}
with nonnegative powers $k_{i\,j}$, form a basis of the algebra $\mathcal{S}$.
\end{theorem}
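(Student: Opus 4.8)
The plan is to combine the spanning statement already established in Lemmas \ref{weakp} and \ref{lem:pbw} with a linear independence argument carried out through the $\mathbb{A}$-form, following the strategy of \cite{M1}. Since the matrix entries $s_{i\,j}$ of $S$ satisfy the relations (\ref{sij})--(\ref{Scentre}) and generate $U_q^{tw}(\mathfrak{gl}_n)$ by Definition \ref{defextu}, there is a canonical surjective algebra homomorphism $\pi\colon \mathcal{S}\to U_q^{tw}(\mathfrak{gl}_n)$ sending each abstract generator to the corresponding entry. By Lemmas \ref{weakp} and \ref{lem:pbw} the displayed ordered monomials span $\mathcal{S}$ over $\C(q)$, so it suffices to prove that their images under $\pi$ are linearly independent in $U_q^{tw}(\mathfrak{gl}_n)\subseteq U_q(\mathfrak{sp}_{2n})$: this forces $\pi$ to be injective, hence an isomorphism, and simultaneously exhibits the monomials as a basis.

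To set up the specialization, I would rescale the generators by putting $\sigma_{i\,j}=(q-q^{-1})^{-1}s_{i\,j}$ for $j'\geq i>j$. Expanding $s_{i\,j}=\sum_{k=j}^{i}\epsilon_k t_{i\,k}\overline{t}_{j\,k}$ and using (\ref{tau1})--(\ref{tau2}), every summand carries at least one off-diagonal factor, so each $\sigma_{i\,j}$ lies in $U_{\mathbb{A}}$; moreover the summands $k=j$ and $k=i$ are the unique ones of lowest order in $q-q^{-1}$, so that under the specialization $q=1$ one computes $\overline{\sigma}_{i\,j}=\epsilon_i\epsilon_j G_{i\,j}$ up to a nonzero scalar. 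By the identities among the $G_{i\,j}$ recorded before Proposition \ref{propgl} together with the isomorphism $\psi$ of that proposition, the $n^2$ elements $\{\overline{\sigma}_{i\,j}:\ j'\geq i>j\}$ are, up to scalars, a basis of $\mathfrak{sp}_{2n}^{\theta}\cong\mathfrak{gl}_n$. Consequently $U_q^{tw}(\mathfrak{gl}_n)$ specializes to $U(\mathfrak{sp}_{2n}^{\theta})$, and the ordered monomials $\overline{M}_\alpha=\prod\overline{\sigma}_{i\,j}^{\,k_{i\,j}}$, taken in the fixed order of the statement, are precisely the PBW monomials of $U(\mathfrak{sp}_{2n}^{\theta})$, hence linearly independent by the classical PBW theorem.

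The linear independence over $\C(q)$ then follows by a degeneration argument. Writing $M_\alpha=\prod s_{i\,j}^{k_{i\,j}}$ and $d_\alpha=\sum_{i,j}k_{i\,j}$, I would factor $\pi(M_\alpha)=(q-q^{-1})^{d_\alpha}\widehat{M}_\alpha$, where $\widehat{M}_\alpha=\prod\sigma_{i\,j}^{k_{i\,j}}\in U_{\mathbb{A}}$ specializes to $\overline{M}_\alpha$. Given a hypothetical nontrivial relation $\sum_\alpha c_\alpha\pi(M_\alpha)=0$ with $c_\alpha\in\C(q)$, I would clear denominators and then divide out the largest power of $(q-1)$ common to all resulting coefficients of the $\widehat{M}_\alpha$, so that at least one coefficient has nonzero value at $q=1$; specializing at $q=1$ yields a nontrivial linear relation among the distinct PBW monomials $\overline{M}_\alpha$, contradicting their linear independence. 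This closes the argument and proves both the isomorphism and the basis claim.

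The delicate point I expect to fight with is the $\mathbb{A}$-form bookkeeping of this last step, made harder by the central relation (\ref{Scentre}). Because monomials of different total degree $d_\alpha$ enter with different powers of $q-q^{-1}$, one must verify carefully both that the normalized generators $\sigma_{i\,j}$ remain $\mathbb{A}$-integral and that their $n^2$ specializations are genuinely independent in $\mathfrak{sp}_{2n}^{\theta}$ in spite of the dependencies among the $s_{i\,j}$ imposed by (\ref{Scentre}), which identify the $\Omega_2$-generators and the diagonal elements $s_{j'\,j}$ through Lemma \ref{overss}. Confirming that exactly the $n^2$ chosen generators survive and degenerate to a basis of $\mathfrak{gl}_n$, so that the limiting relation is an honest relation among classical PBW monomials, is the crux of the proof.
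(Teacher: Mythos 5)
Your proposal is correct and follows essentially the same route as the paper: the canonical surjection $\mathcal{S}\to U_q^{tw}(\mathfrak{gl}_n)$, spanning via Lemmas \ref{weakp} and \ref{lem:pbw}, rescaling to $\sigma_{i\,j}=s_{i\,j}/(q-q^{-1})$ in the $\mathbb{A}$-form, specialization at $q=1$ to (scalar multiples of) the elements $G_{i\,j}$ forming a basis of $\mathfrak{sp}_{2n}^{\theta}\cong\mathfrak{gl}_n$, and the classical PBW theorem plus a clear-denominators argument to rule out a nontrivial relation. The only cosmetic difference is your normalization $\overline{\sigma}_{i\,j}=\epsilon_i\epsilon_j G_{i\,j}$ versus the paper's $G_{i\,j}$, which is an immaterial sign/scalar discrepancy.
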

\begin{proof}
$S$ is the generating matrix of the abstract algebra $\mathcal{S}$ described above, and let
$\widetilde{S}=TJ\overline{T}^u$ be the generating matrix of $ U_q^{tw}(\mathfrak{gl}_n)$.
There exists a natural map from the matrix $S\mapsto \widetilde{S}$, which induces an algebra homomorphism $\rho :\mathcal{S}\to U_q^{tw}(\mathfrak{gl}_n)$. 
It is clear that this homomorphism is surjective. Thus, it suffices to prove that the homomorphism is injective.
According to Lemma \ref{lem:pbw}, the monomials 
\begin{equation*}    
\prod_{i = 1,\cdots,2n}^{\to}s_{i\,1}^{k_{i\,1}}s_{i\,2}^{k_{i\,2}}\cdots s_{i\,i'}^{k_{i\,i'}}
\end{equation*}
span the abstract algebra $\mathcal{S}$. 
We will show that the images of these monomials under $\rho$ are linearly independent.
Regarding $q$ as a formal variable, we denote by $\mathbb{A}$  the algebra of Laurent polynomials $\C [q,q^{-1}]$.
Let $V = U_q^{tw}(\mathfrak{gl}_n)$ and note that the subalgebra $V_{\mathbb{A}}$ is generated by the elements $\sigma_{i\,j}:=\widetilde{s}_{i\,j}/(q - q^{-1})$,  where $j'\geq i > j$. 
From equations (\ref{tau1}) and (\ref{tau2}), we deduce that 
\begin{equation*}\label{basis}
\sigma_{i\,j}=\tau_{i\,j}+\overline{\tau}_{j\,i}+(q - 1)(\tau_{i\,i}\overline{\tau}_{j\,i}+\tau_{i\,j}\overline{\tau}_{j\,j})+(q - q^{-1})\sum_{j < a < i}\tau_{i\,a}\overline{\tau}_{j\,a}.
\end{equation*}
Therefore, the image of $\sigma_{i\,j}$ in $V_\mathbb{A}\otimes_\mathbb{A}\C$ under $\rho$ is given by $G_{i\,j}:=\epsilon_{i}F_{i\,j}-\epsilon_{j}F_{j\,i}$. 
Note that the elements $G_{i\,j}$ where $j'\geq i > j$, form a basis of a subalgebra of $\mathfrak{sp}_{2n}$ that is isomorphic to the general Lie algebra $\mathfrak{gl}_n$, as shown in Proposition \ref{propgl}.
Since the PBW theorem for $\mathfrak{gl}_n$ is well known, the ordered monomials in $G_{i\,j}$ are linearly independent. 
Now assume that there exists a nontrivial linear combination of the ordered monomials in $\sigma_{i\,j}$ that equals zero:
\begin{equation}\label{lin}
    \sum_{(k)}c_{(k)}\prod_{j'\geq i > j}\sigma_{i\,j}^{k_{i\,j}}=0,
\end{equation}
where $(k)=(k_{i\,j}|j'\geq i > j)$, and $c_{(k)}\in\mathbb{A}$.
We further assume that there is at least one coefficient $c_{(k)}$ is nonzero when evaluated at $q = 1$. 
Taking the image of the above equation (\ref{lin}) in $V_\mathbb{A}\otimes_\mathbb{A}\C$ under the mapping $\rho$, we obtain a nontrivial linear combination of the ordered monomials in $G_{i\,j}$ that equal to zero. This leads to a contradiction, thereby proving the claim.
\end{proof}
Based on the above proof, we have obtained the proof of Theorem \ref{pbw}.

\section{Connection with $\imath$quantum group}
In \cite{L1}, G. Letzter provided the general generator for the $\imath$quantum group.
Let $\{\alpha_i | i\in I\}$ be a set of simple roots for the root system of $\mathfrak{g}$.
S. Kolb and J. Pellegrini considered three classes of example in \cite{KP11}.
When the involution $\theta$ coincides with the Chevalley automorphism, 
the $\imath$quantum group is generated by $B_i=f_i-k_i^{-1}e_i$, for all $i\in I$.
When $\mathfrak{g}=\mathfrak{sp}_{2n}$, there exists a natural isomorphism between $U_q^{tw}(\mathfrak{gl}_n)$ and the $\imath$quantum group $\mathcal{U}^{\imath}$ in this case, which will be proved in Theorem \ref{sisoui}.

In this section, we aim to derive the PBW basis for the
$\imath$quantum group  $\mathcal{U}^{\imath}$ for type CI, defined in \cite{KP11}. This will be achieved through the connection between the twisted quantized algebra and  
$\imath$quantum group.

\begin{definition}([\citenum{KP11}, Proposition 3.1])
The $\imath$quantum group $\mathcal{U}^{\imath}$ of type CI is an associative algebra over $\C(q)$, generated by the elements $\{B_i \ | \  i\leq n\}$, satisfying the following relations
    \begin{align}
    &B_iB_j-B_jB_i=0,&\quad &c_{ij}=0,\\
    &\sum_{s=0}^{2}(-1)^s\begin{bmatrix} 2 \\ s \end{bmatrix}_{q_i}B_i^{2-s}B_jB_i^s=-q^{-1}_iB_j,&\quad&c_{ij}=-1,\\
    &\sum_{s=0}^{3}(-1)^s\begin{bmatrix} 3 \\ s \end{bmatrix}_{q_i}B_{n-1}^{3-s}B_{n}B_{n-1}^s=-q^{-1}[2]^2(B_{n-1}B_{n}-B_{n}B_{n-1}).&\quad& \ \label{sn-1sn}
    \end{align}
\end{definition}

\begin{theorem}\label{sisoui}
There exists a natural isomorphism $\widetilde{\phi}$ between $U_q^{tw}(\mathfrak{gl}_n)$ and the $\imath$quantum group $\mathcal{U}^{\imath}$ of type CI, such that $\widetilde{\phi}(s_{i+1\, i}')=(q_i-q_i^{-1})B_i$, where $s_{i+1\, i}'=\beta_i\beta_{i+1}(t_{i+1\, i}\overline{t}_{ii}+t_{i+1\, i+1}\overline{t}_{i\, i+1})$, $i\leq n$, as defined in Proposition \ref{S'}.
\end{theorem}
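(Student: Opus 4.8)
The plan is to realize $\widetilde\phi$ as the restriction to $U_q^{'tw}(\mathfrak{gl}_n)$ of the isomorphism $\phi\colon U_q(\mathfrak{sp}_{2n})\to\mathcal{U}_q(\mathfrak{sp}_{2n})$ of Proposition~\ref{iso}, using the identification $U_q^{tw}(\mathfrak{gl}_n)\cong U_q^{'tw}(\mathfrak{gl}_n)$ of Proposition~\ref{S'} (induced by the automorphism $\tau$ of Lemma~\ref{TT'}). Since $\phi$ is an isomorphism, its restriction to any subalgebra is automatically an injective algebra homomorphism, so no relations of $U_q^{tw}(\mathfrak{gl}_n)$ need be checked by hand; dually, the $\imath$Serre relations hold for the images with no extra work, because the $B_i$ already live inside $\mathcal{U}_q(\mathfrak{sp}_{2n})$. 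The entire content of the theorem is therefore the identification of the image $\phi\bigl(U_q^{'tw}(\mathfrak{gl}_n)\bigr)$ with the subalgebra $\mathcal{U}^{\imath}=\langle B_i\mid i\le n\rangle$ of $\mathcal{U}_q(\mathfrak{sp}_{2n})$, the identification of the latter with the abstract presentation being supplied by [\citenum{KP11}, Proposition~3.1].

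The first step is the key computation $\phi(s'_{i+1\,i})=(q_i-q_i^{-1})B_i$ for $i\le n$. Writing $s'_{i+1\,i}=\beta_i\beta_{i+1}(t_{i+1\,i}\overline{t}_{i\,i}+t_{i+1\,i+1}\overline{t}_{i\,i+1})$ and applying $\phi$ termwise, using $\overline{t}_{i\,i}=t_{i\,i}^{-1}\mapsto t_i^{-1}$ together with $k_i=t_it_{i+1}^{-1}$ (so that $t_{i+1}t_i^{-1}=k_i^{-1}$), the first summand yields $(q_i-q_i^{-1})f_i$ and the second yields $-(q_i-q_i^{-1})k_i^{-1}e_i$, whose sum is $(q_i-q_i^{-1})B_i$. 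The only point needing attention is the scalar $\beta_i\beta_{i+1}$: it equals $1$ for $i<n$, while for the long root $i=n$ (where one must run the computation with $q_n=q^2$) the diagonal normalization of $J^{1/2}$ contributes a constant that is absorbed into the normalization of $\widetilde\phi$ (equivalently, of $B_n$).

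With this in hand, the inclusion $\mathcal{U}^{\imath}\subseteq\phi\bigl(U_q^{'tw}(\mathfrak{gl}_n)\bigr)$ is immediate, since each $B_i=(q_i-q_i^{-1})^{-1}\phi(s'_{i+1\,i})$ lies in the image and the $B_i$ generate $\mathcal{U}^{\imath}$. The reverse inclusion is the substantive part, and amounts to showing that the twisted algebra is already generated by its subdiagonal (``simple'') entries $s_{i+1\,i}$, $1\le i\le n$. The plan is to prove, by induction on the index-distance $i-j$, that every generator $s_{i\,j}$ with $j'\ge i>j$ lies in the subalgebra generated by the $s_{i+1\,i}$: the reflection relations~(\ref{sre}), specialized to nearly adjacent indices, express $s_{i\,j}$ as a $q$-commutator of a subdiagonal generator with a generator of strictly smaller index-distance, modulo correction terms the inductive hypothesis already controls; for indices straddling $n$ one must additionally invoke Lemma~\ref{overss} and the central relation~(\ref{Scentre}) to eliminate the $\overline{s}$-contributions. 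This reduces all $n^2$ PBW generators of Theorem~\ref{UPbw} to the $n$ simple ones, whence $\phi$ sends the subalgebra they generate into $\langle B_i\rangle=\mathcal{U}^{\imath}$.

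Assembling the pieces, $\widetilde\phi$ is an injective homomorphism $U_q^{tw}(\mathfrak{gl}_n)\to\mathcal{U}^{\imath}$ whose image is all of $\mathcal{U}^{\imath}$, hence an isomorphism with $\widetilde\phi(s'_{i+1\,i})=(q_i-q_i^{-1})B_i$. I expect the main obstacle to be precisely the generation-by-simple-generators step: extracting each higher $s_{i\,j}$ cleanly from~(\ref{sre}) is delicate because those relations carry many correction terms, so the induction must be organized carefully — by index-distance together with the weight filtration already used in Lemma~\ref{weakp} — to guarantee that every correction term is genuinely of lower order and is therefore handled by the inductive hypothesis.
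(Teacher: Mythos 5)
Your proposal is correct and takes essentially the same route as the paper: $\widetilde{\phi}$ is realized as the restriction of the isomorphism $\phi$ of Proposition \ref{iso} to the twisted subalgebra, via the commutative diagram with the inclusions into $U_q(\mathfrak{sp}_{2n})$ and $\mathcal{U}_q(\mathfrak{sp}_{2n})$. In fact your write-up is more complete than the paper's own proof, which consists of little beyond that diagram: the image computation $\phi(s'_{i+1\,i})=(q_i-q_i^{-1})B_i$ (including the $\beta_i\beta_{i+1}$ normalization at the long root) is left implicit there, and the surjectivity onto $\mathcal{U}^{\imath}$ --- the generation of $U_q^{tw}(\mathfrak{gl}_n)$ by the subdiagonal entries $s_{i+1\,i}$, which you correctly identify as the substantive step --- is supplied by the paper only after the theorem, as Lemma \ref{fres}, exactly along the inductive lines you sketch.
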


\begin{proof}
Since $U_q^{tw}(\mathfrak{gl}_n)$ and $\mathcal{U}^{\imath}$ are  subalgebras of $U_q(\mathfrak{sp}_{2n})$ and $\mathcal{U}_q(\mathfrak{sp}_{2n})$, respectively,
together with the isomorphism $\phi$ from  $U_q(\mathfrak{sp}_{2n})$ to $\mathcal{U}_q(\mathfrak{sp}_{2n})$, as discussed in Theorem \ref{iso}, we have the following commutative diagram:
		\begin{center}
	\begin{tikzcd}
		U_q^{tw}(\mathfrak{gl}_n)
		\arrow{r}\arrow{d}{\widetilde{\phi}} & U_q(\mathfrak{sp}_{2n}) \arrow{d}{\phi} \\
		\mathcal{U}^{\imath} \arrow{r} & \mathcal{U}_q(\mathfrak{sp}_{2n}),
	\end{tikzcd}
\end{center}
		where $\widetilde{\phi}=\phi|_{U_q^{tw}(\mathfrak{gl}_n)}$. 
        As a consequence, it follows that $\phi$ is both surjective and injective.
	\end{proof}
\begin{remark}
In [\citenum{L1}, Section 6.], G. Letzter shown that the coideal subalgebras $U_q^{tw}(\mathfrak{g})$, constructed via solutions of the reflection equation in \cite{N2}, are contained in Letzter's coideal algebra $\mathcal{U}^{\imath}$. 
These subalgebras may not, in general, be isomorphic. In this work, we establish an isomorphism between them for type 
$\operatorname{CI}$.
\end{remark}

The following relations hold in $U_q^{tw}(\mathfrak{gl}_n)$, which is an associative algebra with $n^2$ generators $s'_{i\,j}$, where $j'\geq i> j$.
\begin{lemma}\label{fres}
We have the following relations in generating series in $U_q^{tw}(\mathfrak{gl}_n)$,
		\begin{align}
&s'_{i\,i-1}=s'_{i'+1\,i'}, &\quad i\leq n,\label{s'ii-1}\\
&(q-q^{-1})s'_{i+1\,i-1}=qs'_{i+1\, i}s'_{i\,i-1}-s'_{i\,i-1}s'_{i+1\, i}, &\quad i< n,\label{s'i+1i-1}\\
			&(q^2-q^{-2})s'_{n+1\,i}=q^2s'_{n+1\,n}s'_{n\,i}-s'_{n\,i}s'_{n+1\,n}, &\quad i\leq n-1,\label{s'n+1n}\\
			&(q-q^{-1})s'_{i'+1\,i-1}=q^{-1}s'_{i'\,i-1}s'_{i\,i-1}-qs'_{i\,i-1}s'_{i'\,i-1}+(q-q^{-1})q^{-1}s'_{i'\,i},&\quad i\leq n,\label{s'i+1i-1}\\
			&(q-q^{-1})s'_{i\,j}=qs'_{i\,i-1}s'_{i-1\,j}-s'_{i-1\,j}s'_{i\,i-1}, &\text{otherwise}.\label{sijgen}
		\end{align}
	\end{lemma}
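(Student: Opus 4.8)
The plan is to prove all five relations (\ref{s'ii-1})--(\ref{sijgen}) directly from the reflection relation (\ref{srefection}), written out in components as (\ref{sre}), after transporting everything through the automorphism $\tau$ of Lemma \ref{TT'}. Since $S' = \tau(T)J(\tau(T))^u$ satisfies the \emph{same} reflection equation $RS'_1R^uS'_2 = S'_2R^uS'_1R$ as $S$ (this is exactly the content of Proposition \ref{S'} and the fact that $C = \sqrt{-1}J$ solves the reflection equation), the component identities (\ref{sre}) hold verbatim with every $s_{i\,j}$ replaced by $s'_{i\,j}$. Thus the entire proof reduces to specializing (\ref{sre}) at suitable choices of the four indices $(i,a,j,b)$ and simplifying, keeping in mind $s'_{i\,i}$ is scalar and $s'_{i\,j} = 0$ for $i < j$ by (\ref{sij})--(\ref{sii}).

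First I would dispose of (\ref{s'ii-1}), the symmetry $s'_{i\,i-1} = s'_{i'+1\,i'}$. This is not a reflection-relation computation but a structural one: it should follow from Lemma \ref{overss} (which relates $\overline{s}_{i\,j}$ to $s_{j'\,i'}$, hence encodes the interaction of $S$ with the transposition $t$) together with the explicit form $s'_{i\,j} = \beta_i\beta_j\sum_{k=j}^i t_{i\,k}\overline{t}_{j\,k}$ and the diagonal scaling $J^{1/2}$, so that the scaling factors $\beta_i\beta_{i-1}$ and $\beta_{i'+1}\beta_{i'}$ match up. Next, for the three genuinely quadratic relations (\ref{s'i+1i-1}), (\ref{s'n+1n}), and (\ref{sijgen}), I would read off the appropriate specialization of (\ref{sre}): for (\ref{s'i+1i-1}) take $i \to i+1$, $a \to i$, $j \to i$, $b \to i-1$ (so the leading $s'_{i+1\,i}s'_{i\,i-1}$ appears), collect the Kronecker deltas, and verify that the surviving right-hand terms collapse to $q^{-1}s'_{i\,i-1}s'_{i+1\,i}$ plus the definitional $s'_{i+1\,i-1}$; the case (\ref{s'n+1n}) is the analogous specialization at the short-root end $i = n$, where the $q_n = q^2$ scaling produces the $q^2$ and $q^2 - q^{-2}$ factors; and (\ref{sijgen}) is the general telescoping step $s'_{i\,j} \sim s'_{i\,i-1}s'_{i-1\,j} - \cdots$ obtained by taking $a \to i-1$, $j \to i-1$, $b \to j$.

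The main obstacle will be relation (\ref{s'i+1i-1}) in its second, long-root incarnation --- the one involving $s'_{i'+1\,i-1}$ and the extra inhomogeneous term $(q-q^{-1})q^{-1}s'_{i'\,i}$. Here the specialization of (\ref{sre}) triggers the $\delta_{a\,j'}$ and $\delta_{i\,j'}$ summation terms in (\ref{sre}), which are precisely the terms that sum over intermediate indices $k$ and reflect the symplectic pairing. Disentangling these sums and showing they reduce to the single correction term $q^{-1}s'_{i'\,i}$ will require invoking the central relation (\ref{Scentre}), or more concretely its component form (\ref{scenspecific2}) and (\ref{nScen}), to re-express the $s'_{k'\,\cdot}s'_{k\,\cdot}$ products; this is the one place where the quadratic reflection relation alone is insufficient and the center must intervene. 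I would handle this by first writing out the full specialization, isolating the anomalous sum, and then applying (\ref{scenspecific2}) index by index to telescope it. The remaining relations are routine once the delta-bookkeeping is organized, so I would present (\ref{s'i+1i-1}) in detail and indicate the others as analogous specializations.
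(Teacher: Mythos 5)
Your global strategy---pass to the component reflection relations (\ref{sre}) for the primed entries via Proposition \ref{S'} and then specialize indices---is exactly the paper's approach, and your proposed specializations for the second relation (indices $(i+1,i,i,i-1)$) and for (\ref{sijgen}) (indices $(i,i-1,i-1,j)$) do work for generic indices. But your plan fails at the first relation (\ref{s'ii-1}). The two sides of (\ref{s'ii-1}) are built from \emph{disjoint} families of RTT generators: $s'_{i\,i-1}$ from $t_{i\,k},\overline{t}_{i-1\,k}$, and $s'_{i'+1\,i'}$ from $t_{i'+1\,k},\overline{t}_{i'\,k}$. Lemma \ref{overss} cannot couple these families: by definition $\overline{s}_{a\,b}$ is a sum of products $\overline{t}_{a'\,k'}t_{b'\,k'}$, and the lemma merely rewrites it as $q\epsilon_a\epsilon_b s_{b'\,a'}$, i.e.\ the \emph{same} products in the opposite order---it is an RTT-reordering statement, and no bookkeeping of the scaling factors $\beta_i$ changes that. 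Any proof of (\ref{s'ii-1}) must use a relation that mixes the two blocks: either the central relation (\ref{Scentre}) through its component form (\ref{scenspecific2}) with $j=i-1$ (the $k$-sum and the $\delta_{j\leq i'\leq i}$ term are then empty, leaving $q s_{i'+1\,i'}-q s_{i\,i-1}=0$), or---as the paper intends---the reflection relation itself: specializing (\ref{sre}) so that the left-hand product is $s_{i'\,i'}s_{i\,i-1}$ (indices $(i',i',i,i-1)$), the $\delta_{i\,j'}$-sum collapses to $(q-q^{-1})q^{-1}s_{i'+1\,i'}$ because every other summand contains a factor $s_{k'\,i-1}$ with $k'<i-1$, which vanishes by (\ref{sij}), and one gets $(1-q^{-2})s_{i\,i-1}=(1-q^{-2})s_{i'+1\,i'}$.

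The second problem is your diagnosis of the fourth relation (the one with the correction term $(q-q^{-1})q^{-1}s'_{i'\,i}$): it is not the place where ``the reflection relation alone is insufficient and the center must intervene.'' Specializing (\ref{sre}) at $(i,a,j,b)=(i',i-1,i,i-1)$ gives it in one step: the $\delta_{i\,j'}$-sum again survives only at $k=i'+1$ (all other summands contain $s_{k'\,i-1}=0$ by triangularity), producing $(q-q^{-1})s_{i'+1\,i-1}$, while the correction term is just the $\delta_{a<j}$ term $-(q-q^{-1})q^{-1}s_{i'\,i}\,s_{i-1\,i-1}$ with $s_{i-1\,i-1}$ a scalar. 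No central relation enters anywhere in this lemma, which is exactly why the paper can call the first four relations ``evident from (\ref{sre}).'' Where the choice of specialization genuinely matters is (\ref{s'n+1n}): starting from the product $s'_{n+1\,n}s'_{n\,i}$ (your ``analogous specialization at the short-root end'') leaves a surviving sum $\sum_{k=n+2}^{i'}q^{\overline{k}-\overline{n+1}}s'_{k\,n}s'_{k'\,i}$ that does not vanish; one must instead start from $s'_{n\,i}s'_{n+1\,n}$, i.e.\ specialize at $(n,i,n+1,n)$, where all $\delta$-sums die by (\ref{sij}) and the coefficients $q^2$ and $q^2-q^{-2}$ emerge. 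Finally, note the paper proves (\ref{sijgen}) by induction on $i-j$: your single specialization covers generic $(i,j)$ but not the coincidences $j=i'$ and $j=(i-1)'$, where extra $\delta$-sums survive and must be absorbed by the induction or a separate case analysis.
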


\begin{proof}
The relations (\ref{s'ii-1})--(\ref{s'i+1i-1}) are evident from (\ref{sre}). Relation (\ref{sijgen}) is preserved by induction on $k=i-j$.   
\end{proof}

Let $B_{i\,j}=(q-q^{-1})s_{i\,j}$, where $i\neq n+1,j\neq n$, and $B_{n+1\,n}=(q^2-q^{-2})s_{n+1\,n}$, the following lemma holds.
\begin{lemma}
It follows that the following relations involving $B_{i\,j}$:
	\begin{align*}
    &B_{i\,j}=qB_{i\,i-1}B_{i-1\,j}-B_{i-1\,j}B_{i\,i-1}, \quad |i-j|>1, i<n+1, i\neq j',\\
    &B_{n+1\,j}=q^2B_{n+1\,n}B_{n\,j}-B_{n\,j}B_{n+1\,n},\quad j\leq n-1,\\
    &B_{i\,j}=B_{i-1\,j}B_{i'+1\,i'}-qB_{i'+1\,i'}B_{i-1\,j}, \quad |i-j|> 1, i> n+1,\\
    &B_{i'+1\,i-1}=q^{-1}B_{i'\,i-1}B_{i\,i-1}-qB_{i\,i-1}B_{i'\,i-1}+q^{-1}B_{i'i},\quad i\neq n,\\
    &B_{n+2\,n-1}=q^{-1}B_{n+1\,n-1}B_{n\,n-1}-qB_{n\,n-1}B_{n+1\,n-1}+(1+q^{-2})B_{n+1\,n}.\\
		\end{align*}
\end{lemma}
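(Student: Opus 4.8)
The plan is to obtain all five identities by transporting the relations of Lemma \ref{fres} from the generators $s'_{i\,j}$ to the renormalized elements $B_{i\,j}$. Each $B_{i\,j}$ is a scalar multiple of $s'_{i\,j}$: the distance-one generators carry the normalization $q-q^{-1}$, the single exception being the type-C node $B_{n+1\,n}=(q^2-q^{-2})s'_{n+1\,n}=(q+q^{-1})(q-q^{-1})s'_{n+1\,n}$, and the higher root vectors ($|i-j|>1$) inherit their normalizations recursively through the commutator relations themselves. Consequently every relation of Lemma \ref{fres} becomes a relation among the $B_{i\,j}$ after substitution, and the whole proof reduces to rescaling term by term and collecting the surviving scalar factors (the powers of $q-q^{-1}$ and the factor $q+q^{-1}=[2]$ wherever the node intervenes).

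I would organize the computation by the region of the first index. The relation $B_{i\,j}=qB_{i\,i-1}B_{i-1\,j}-B_{i-1\,j}B_{i\,i-1}$ for $i<n+1$ is the rescaled form of \eqref{sijgen}, while $B_{n+1\,j}=q^2B_{n+1\,n}B_{n\,j}-B_{n\,j}B_{n+1\,n}$ for $j\le n-1$ is the rescaled form of \eqref{s'n+1n}, where the coefficient $q^2$ and the extra factor carried by the node normalization must be matched against each other. For $i>n+1$ I would first invoke the symmetry \eqref{s'ii-1}, namely $s'_{i\,i-1}=s'_{i'+1\,i'}$, to replace the factor $s'_{i\,i-1}$ in \eqref{sijgen} by $s'_{i'+1\,i'}$; after rescaling this produces $B_{i\,j}=B_{i-1\,j}B_{i'+1\,i'}-qB_{i'+1\,i'}B_{i-1\,j}$, the reversed order of the commutator and the relocation of the coefficient $q$ being exactly the imprint of the reflection between the upper-left and lower-right triangular blocks recorded in Lemma \ref{overss}.

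The remaining two identities produce the central root vectors $B_{i'+1\,i-1}=B_{(i-1)'\,i-1}$ (note $(i'+1)'=i-1$) and come from the mixed relation of Lemma \ref{fres}, that is $(q-q^{-1})s'_{i'+1\,i-1}=q^{-1}s'_{i'\,i-1}s'_{i\,i-1}-qs'_{i\,i-1}s'_{i'\,i-1}+(q-q^{-1})q^{-1}s'_{i'\,i}$. For $i\neq n$ the last generator $s'_{i'\,i}=s'_{(i)'\,i}$ is an ordinary central vector carrying the recursive normalization, so rescaling yields relation four with the plain coefficient $q^{-1}$ on $B_{i'\,i}$. The distinguished case is $i=n$, where this last generator is $s'_{n+1\,n}=s'_{n'\,n}$, the base of the chain of central vectors, sitting at the type-C node; replacing it by $B_{n+1\,n}=(q^2-q^{-2})s'_{n+1\,n}$ injects the extra factor $[2]=q+q^{-1}$ relative to the ordinary normalization, so the linear coefficient is modified from $q^{-1}$ to $q^{-1}(q+q^{-1})=1+q^{-2}$. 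This is precisely why the instance $i=n$ must be stated separately as the fifth relation.

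The main obstacle I anticipate is the normalization and sign bookkeeping rather than any conceptual difficulty. One has to fix the scaling of every higher root vector $B_{i\,j}$ ($|i-j|>1$) consistently with the distance-one generators, confirm that the single primitive exceptional scaling lives only at $B_{n+1\,n}=B_{n'\,n}$ and is what propagates through the recursion to yield the coefficient $1+q^{-2}$, and carry the reflection sign correctly across the region $i>n+1$ via \eqref{s'ii-1} and the $\overline{S}$-symmetry of Lemma \ref{overss}. Once these constants are pinned down, each of the five relations follows by a one-line substitution into the appropriate relation of Lemma \ref{fres}.
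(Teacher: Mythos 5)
Your proposal takes essentially the same route as the paper: the paper's entire proof is the single line ``It can be obtained directly by Lemma \ref{fres},'' and your relation-by-relation rescaling --- matching the first relation to \eqref{sijgen}, the second to \eqref{s'n+1n}, the third to \eqref{sijgen} combined with \eqref{s'ii-1}, and the last two to the mixed relation of Lemma \ref{fres} with the fifth being its $i=n$ instance, where the node normalization $[2]$ turns $q^{-1}$ into $1+q^{-2}$ --- is precisely that substitution argument written out in detail. The only bookkeeping caveat is the direction of the scalar at the node (whether $B_{n+1\,n}$ is $(q^2-q^{-2})s_{n+1\,n}$ or its inverse multiple), but the paper's own definition is equally loose on this point, so your treatment is faithful to, and indeed more explicit than, the paper's proof.
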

\begin{proof}
It can be obtained directly by Lemma \ref{fres}.
\end{proof}
\begin{theorem} \label{pbwui}
The monomials of the form
		\begin{equation*}
		B_{2\,1}^{k_{2\,1}}B_{3\,1}^{k_{3\,1}}B_{3\,2}^{k_{3\,2}}\cdots B_{n\,1}^{k_{n\,1}}\cdots B_{n\,n-1}^{k_{n\,n-1}}B_{n+1\,1}^{k_{n+1\,1}}\cdots B_{n+1\,n}^{k_{n+1\,n}}\cdots B_{2n\,1}^{k_{2n\,1}}
		\end{equation*}
			with nonnegative powers $k_{i\,j}$ constitute a basis of the algebra $U^{\imath}$.
	\end{theorem}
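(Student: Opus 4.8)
The plan is to deduce the statement as a corollary of the PBW theorem for $U_q^{tw}(\mathfrak{gl}_n)$ (Theorem \ref{UPbw}) together with the isomorphism $\widetilde{\phi}$ of Theorem \ref{sisoui}, using that each $B_{i\,j}$ is merely a nonzero scalar multiple of the generator $s_{i\,j}$ (the factor being $q-q^{-1}$ in every case except $(i,j)=(n+1,n)$, where it is $q^2-q^{-2}$). The entire argument is thus a transport of a basis across a chain of isomorphisms, and the only real content is to check that the generators are matched correctly.

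First I would record the basis in the required form. By Theorem \ref{UPbw} the ordered monomials in the $s_{i\,j}$ with $j'\geq i>j$, taken in the displayed order, form a basis of $U_q^{tw}(\mathfrak{gl}_n)$. Since $B_{i\,j}$ is a nonzero scalar multiple of $s_{i\,j}$ and these scalars are units of $\C(q)$, each ordered monomial $\prod B_{i\,j}^{k_{i\,j}}$ equals a nonzero element of $\C(q)$ times the corresponding monomial in the $s_{i\,j}$; as the order appearing in the statement is identical to that of Theorem \ref{UPbw}, this generator-by-generator rescaling is a bijection of the two monomial families, so the monomials in the $B_{i\,j}$ again form a basis of $U_q^{tw}(\mathfrak{gl}_n)$. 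Composing the algebra isomorphism of Proposition \ref{S'} with the isomorphism $\widetilde{\phi}$ of Theorem \ref{sisoui} yields an isomorphism from $U_q^{tw}(\mathfrak{gl}_n)$ to $\mathcal{U}^{\imath}$; because an isomorphism carries a basis to a basis, the images of these monomials form a basis of $\mathcal{U}^{\imath}=U^{\imath}$.

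The one step requiring care is the verification that $\widetilde{\phi}$ identifies the elements $B_{i\,j}$ defined inside the twisted algebra with the like-named elements of $\mathcal{U}^{\imath}$. For the simple root vectors $B_{i+1\,i}$ this is exactly the content of Theorem \ref{sisoui}, which gives $\widetilde{\phi}(s'_{i+1\,i})=(q_i-q_i^{-1})B_i$. For the higher vectors $B_{i\,j}$ with $|i-j|>1$ I would induct on $|i-j|$: Lemma \ref{fres} expresses each such $s'_{i\,j}$ recursively through the $s'_{i+1\,i}$, and applying the homomorphism $\widetilde{\phi}$ to these recursions, after the scalar normalization defining the $B_{i\,j}$, reproduces precisely the recursive relations recorded in the lemma immediately preceding this theorem. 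Hence $\widetilde{\phi}(B_{i\,j})$ is the $\imath$quantum group element $B_{i\,j}$, and the transported basis is exactly the displayed family. I do not anticipate a genuine obstacle at this stage, since the difficult work—the linear independence and spanning that constitute the PBW theorem—was already completed in Section 4; what remains is the bookkeeping of matching generators, orders, and the single exceptional normalization at $(n+1,n)$, all carried out one generator at a time.
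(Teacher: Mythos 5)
Your proposal is correct and follows essentially the same route as the paper: the paper's own proof is the one-line observation that Theorem \ref{pbwui} follows from the isomorphism of Theorem \ref{sisoui} together with the PBW theorem for $U_q^{tw}(\mathfrak{gl}_n)$ (Theorem \ref{UPbw}), i.e.\ transport of the basis across the isomorphism. Your additional bookkeeping --- the unit-scalar rescaling $s_{i\,j}\mapsto B_{i\,j}$, the composition with Proposition \ref{S'}, and the inductive matching of the higher $B_{i\,j}$ via the recursions of Lemma \ref{fres} --- is exactly the detail the paper leaves implicit, and it is carried out correctly.
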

    \begin{proof}
According to the isomorphism from $U_q^{tw}(\mathfrak{gl}_n)$ and $\mathcal{U}^{\imath}$ proved in Theorem \ref{sisoui}, we have Theorem \ref{pbwui}.
    \end{proof}
    \begin{remark}
      In fact, the PBW basis of the $\imath$quantum group $U^{\imath}$ generated by $B_i$ was obtained in \cite{XY14}. Our approach also provides the PBW basis for $\mathcal{U}^{\imath}$ as an application of the connection between  $U_q^{tw}(\mathfrak{gl}_n)$ and $\mathcal{U}^{\imath}$. 
    \end{remark}
   
\section{An associated Poisson algebra}
In this section, we use $S=TJ\overline{T}^u$, $s_{i\,j}=\sum_{k=j}^{i}\epsilon_kt_{i\,k}\overline{t}_{j\,k}$, rather than $S'$ defined in Section 3.2 to avoid confusion in the calculation. Actually, we have $s_m=s'_m$ for $m=1,\cdots, n-1$, while $s_n=-\sqrt{-1}s'_n$.
We first introduce the $\mathbb{A}$-form of $U_q^{tw}(\mathfrak{gl}_n)$. Let $q$ be a formal variable and consider $U_q^{tw}(\mathfrak{gl}_n)$ as an algebra over $\C(q)$.
Define $\mathbb{A}=\C[q,q^{-1}]$. 
Let $U'_{\mathbb{A}}$ be the $\mathbb{A}$-subalgebra of  $U_q^{tw}(\mathfrak{gl}_n)$ generated by elements $s_{i\,j}$, $i > j$, $i,j = 1,\cdots 2n.$
In fact, the following isomorphism holds
\begin{align}
         U'_{\mathbb{A}}\otimes_{\mathbb{A}}\C \cong \mathcal{P}_n,\label{S-PIso}
\end{align}
where the action of $\mathbb{A}$ on $\C$ is defined by the evaluation $q=1$. 
Here, $\mathcal{P}_n$ denotes the algebra of polynomials in the variables $a_{i\,j}$ for $ i > j$, $i,j = 1,\cdots 2n.$
The elements $a_{i\,j}$ corresponding to the images of $s_{i\,j}$ under the isomorphism (\ref{S-PIso}).
The algebra $\mathcal{P}_n$ equipped with the Poisson bracket $\{\cdot.\cdot\}$ introduced in \cite{M08}, 
\begin{align*}
  \{a_{i\,j},a_{k\,l}\}=\dfrac{s_{i\,j}s_{k\,l}-s_{k\,l}s_{i\,j}}{1 - q}\Bigg{|}_{q = 1}.  
\end{align*}
    \begin{theorem}
The Poisson algebra $\mathcal{P}_{n}$ has a Poisson bracket defined by 
        \begin{equation} \label{poissonre}
            \begin{aligned}
                \{a_{i\,j},a_{k\,l}\}= & (\delta_{i\,k}-\delta_{i\,k'}+\delta_{j\,k}-\delta_{j\,k'}-\delta_{i\,l}+\delta_{i\,l'}-\delta_{j\,l}+\delta_{j\,l'})a_{i\,j}a_{k\,l}\\
                &-2(\delta_{l < j}-\delta_{i < k})a_{k\,j}a_{i\,l}-2\delta_{l < i}a_{k\,i}a_{l\,j}+2\delta_{j < k}a_{i\,k}a_{j\,l}\\
                &+2\delta_{l\,j'}\sum_{m = 1}^{j - 1}\epsilon_m\epsilon_ja_{k\,m'}a_{i\,m}+2\delta_{l\,i'}\sum_{m = i'+1}\epsilon_i\epsilon_{m'}a_{k\,m}a_{m'\,j}\\
                &-2\delta_{j\,k'}\sum_{m = k'+1}\epsilon_m\epsilon_{k'}a_{i\,m}a_{m'\,l}-2\delta_{i\,k'}\sum_{m = i + 1}\epsilon_i\epsilon_{m}a_{m\,j}a_{m'\,l}.   
            \end{aligned}
        \end{equation}
    \end{theorem}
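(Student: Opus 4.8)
The plan is to read the bracket off directly from the reflection relation (\ref{sre}), since by its very definition $\{a_{i\,j},a_{k\,l}\}$ is the semiclassical limit of $(s_{i\,j}s_{k\,l}-s_{k\,l}s_{i\,j})/(1-q)$ at $q=1$. First I would specialize (\ref{sre}), whose free row/column indices form the pattern $s_{i\,a}s_{j\,b}$, to the pattern $s_{i\,j}s_{k\,l}$; that is, I replace $(i,a,j,b)$ by $(i,j,k,l)$. After this substitution the left-hand side is $q^{P_1}s_{i\,j}s_{k\,l}$ with $P_1=\delta_{i\,k}-\delta_{i\,k'}+\delta_{j\,k}-\delta_{j\,k'}$, and the leading term on the right is $q^{P_2}s_{k\,l}s_{i\,j}$ with $P_2=\delta_{i\,l}-\delta_{i\,l'}+\delta_{j\,l}-\delta_{j\,l'}$. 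Rearranging isolates the commutator,
\begin{equation*}
s_{i\,j}s_{k\,l}-s_{k\,l}s_{i\,j}=(q^{P_2-P_1}-1)\,s_{k\,l}s_{i\,j}+q^{-P_1}R_{i\,j\,k\,l},
\end{equation*}
where $R_{i\,j\,k\,l}$ gathers all the correction terms on the right-hand side of (\ref{sre}).

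The second step is to divide by $1-q$ and let $q\to 1$, using the elementary limits $(q^m-1)/(1-q)\to -m$, $(q-q^{-1})/(1-q)\to -2$, and $(q-q^{-1})^2/(1-q)\to 0$. The diagonal prefactor contributes $(q^{P_2-P_1}-1)/(1-q)\to P_1-P_2$, which is precisely the coefficient of $a_{i\,j}a_{k\,l}$ on the first line of (\ref{poissonre}) once $s_{k\,l}s_{i\,j}$ is replaced by its commuting image $a_{i\,j}a_{k\,l}$. Each summand of $R_{i\,j\,k\,l}$ carries a factor $(q-q^{-1})$ or $(q-q^{-1})^2$: the four quadratic terms of (\ref{sre}) vanish in the limit, while every linear term survives with weight $-2$, its residual $q$-powers and the prefactor $q^{-P_1}$ all specializing to $1$. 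Applying the same substitution $(i,a,j,b)\mapsto(i,j,k,l)$ to these linear terms, and renaming each internal summation index to $m$ to avoid collision with $k$ and $l$, then reproduces one by one the remaining summands of (\ref{poissonre}); I would record this as a short dictionary between the two sets of terms.

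The point requiring genuine care is the well-definedness of the limit rather than the algebra itself: I must check that $s_{i\,j}s_{k\,l}-s_{k\,l}s_{i\,j}$ lies in $(1-q)\,U'_{\mathbb{A}}$, so that division by $1-q$ followed by evaluation at $q=1$ is legitimate inside $U'_{\mathbb{A}}\otimes_{\mathbb{A}}\C\cong\mathcal{P}_n$ of (\ref{S-PIso}). This is exactly what the displayed rearrangement furnishes, since $q^{P_2-P_1}-1$ and the factors $(q-q^{-1})$ and $(q-q^{-1})^2$ are each divisible by $1-q$ in $\mathbb{A}$, and since $\mathcal{P}_n$ is commutative the two orderings $s_{i\,j}s_{k\,l}$ and $s_{k\,l}s_{i\,j}$ have the same image at $q=1$. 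The only delicate bookkeeping is tracking the sign factors $\epsilon_i\epsilon_k$ and the $q^{\,\overline{k}-\overline{i}}$-type weights through the substitution; I expect the boundary indices, where the Kronecker deltas $\delta_{i\,k'}$, $\delta_{j\,l'}$ and $\delta_{a\,i'}$ become active, to be where sign errors are most likely, so I would verify those degenerate cases separately. Antisymmetry and the Leibniz rule for $\{\cdot,\cdot\}$ are automatic from its definition as a commutator limit, so they require no further argument.
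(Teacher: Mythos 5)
Your proposal is correct and takes essentially the same approach as the paper, whose proof is the one-line assertion that the formula follows directly from the reflection relation (\ref{sre}) and the definition of the Poisson bracket. Your detailed execution—substituting $(i,a,j,b)\mapsto(i,j,k,l)$, isolating the commutator, applying the limits $(q^{m}-1)/(1-q)\to -m$ and $(q-q^{-1})/(1-q)\to -2$ while the $(q-q^{-1})^2$ terms vanish, and checking divisibility by $1-q$ in $U'_{\mathbb{A}}$—is precisely the computation the paper leaves implicit, and it reproduces every term of (\ref{poissonre}) with the correct signs.
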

    \begin{proof}
From the reflection relations (\ref{sre})  and the definition of the Poisson bracket, the explicit formulas for $\{a_{i\,j},a_{k\,l}\}$ can be derived directly.
    \end{proof}
    \begin{remark}
        We organize the variables $a_{i\,j}$ into a lower triangular matrix $A$ with ones on the diagonal
        \begin{equation*}
            A = \begin{pmatrix}
                1 & 0 & 0 & 0 & \cdots & 0 & 0 \\
                a_{2\,1} & 1 & 0 & 0 & \cdots & 0 & 0 \\
                a_{3\,1} & a_{3\,2} & 1 & 0 & \cdots & 0 & 0 \\
                \vdots & \vdots & \vdots & \vdots & \ddots & \vdots & \vdots \\
                a_{2n-1\,1} & a_{2n-1\,2} & a_{2n-1\,3} & a_{2n-1\,4} & \cdots & -1 & 0\\
                a_{2n\,1} & a_{2n\,2} & a_{2n\,3} & a_{2n\,4} & \cdots & a_{2n\,2n-1} & -1
            \end{pmatrix}.
        \end{equation*}
        The Poisson brackets of $\mathcal{P}_{n}$ 
        can be expressed in matrix form. As defined earlier, we adopt the standard notations:
                $$A_1=\sum_{i,j}a_{i\,j}\otimes e_{i\,j}\otimes I, \quad A_2=\sum_{i,j}a_{i\,j}\otimes I\otimes e_{i\,j},$$
        where $I$ is the identity matrix. 
    By the observation, let
        $$r=\dfrac{R-I\otimes I-I\otimes I^u}{q-1}\Bigg{|}_{q=1}.$$
 It follows from the $R$-matrix (\ref{R}) and the reflection equation (\ref{sre}) that
        
        $$\{A_1,A_2\}=[r,A_1A_2]+A_1r^uA_2-A_2r^uA_1,$$
        where 
        $$
        r=\sum_{i}e_{i\,i}\otimes e_{i\,i}-\sum_{i}e_{i\,i}\otimes e_{i'\,i'}+2\sum_{i<j}e_{i\,j}\otimes e_{j\,i}-2\sum_{i<j}\epsilon_i\epsilon_je_{i\,j}\otimes e_{i'\,j'},
        $$
        
        $$
        r^u=\sum_{i}e_{i\,i}\otimes e_{i\,i}-\sum_{i}e_{i\,i}\otimes e_{i'\,i'}+2\sum_{i<j}e_{j\,i}\otimes e_{j\,i}-2\sum_{i<j}\epsilon_i\epsilon_je_{j\,i}\otimes e_{i'\,j'}.
        $$
It can be verified that the Poisson brackets (\ref{poissonre}) can be uniformly expressed in matrix form. 
To accommodate the $R$-matrix of type C, 
we adopt the approach outlined by A. Molev and E. Ragoucy in\cite{M08}, with some modifications.
    \end{remark}
    \begin{proposition}
    It holds in the Poisson algebra $\mathcal{P}_{n}$,
    \begin{equation} \label{pcen}
    \sum_{k=j,k\neq i'}^{i-1}\epsilon_{i}\epsilon_{k}a_{k'\,i'}a_{k\,j}+\epsilon_{i'}a_{i\,j}+\delta_{j\leq i'\leq i}\epsilon_{i'} a_{i'\,j}=0 ,\quad i\neq j.
    \end{equation}
    Moreover, $a_{k\,l}, k>l>k'$ can be spanned by $a_{i\,j}$, $j'>i>j$ .
\end{proposition}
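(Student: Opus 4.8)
The plan is to derive equation \eqref{pcen} as the classical limit of the central relation and then use it to express the ``deep'' variables $a_{k\,l}$ with $k>l>k'$ in terms of the ``standard'' ones. First I would start from the quantum central relation \eqref{nScen}, namely $\sum_{k=j}^{i}q^{\overline{j}-\overline{k}}\overline{s}_{i\,k}s_{k\,j}=0$ for $i\neq j$, which after applying Lemma~\ref{overss} to rewrite every $\overline{s}_{i\,k}$ in terms of the $s$-generators becomes the specific relation \eqref{scenspecific2}. The idea is to divide \eqref{scenspecific2} by the appropriate power of $q$, write each coefficient as $q^{N}=1+N(q-1)+O((q-1)^2)$, and take the limit $q\to1$ under the isomorphism \eqref{S-PIso} that sends $s_{i\,j}\mapsto a_{i\,j}$. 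Since relation \eqref{pcen} is a polynomial identity (not a Poisson-bracket identity), only the leading order in $(q-1)$ survives; all the $q$-exponentials $q^{\overline{j}-\overline{k}+1}$, $q$, $q^{\overline{j}-\overline{i}}$ collapse to $1$, and the boundary term $\overline{s}_{i\,i'}s_{i'\,j}$ contributes through $\overline{s}_{i\,i'}\to\overline{a}_{i\,i'}=\epsilon_{i'}$ by Lemma~\ref{overss}, yielding the $\delta_{j\leq i'\leq i}\epsilon_{i'}a_{i'\,j}$ term.

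For the second assertion, that each $a_{k\,l}$ with $k>l>k'$ is spanned by the $a_{i\,j}$ with $j'>i>j$, I would run the exact classical analogue of the argument in Lemma~\ref{sijspan}. Specializing the spanning relation \eqref{sijs'} at $q=1$, relation \eqref{pcen} can be solved for $a_{i\,j}$ (the $k=i'$ term, which is the ``deep'' variable) as a combination of $a_{j'\,i'}$ and products $a_{k'\,i'}a_{k\,j}$, all of whose first members lie in the standard range. The recursion is on $m=i-j$: the base case $m=1$ gives $a_{i\,j}=-\epsilon_{i}\epsilon_{i'}a_{j'\,i'}=a_{j'\,i'}$ directly from \eqref{pcen}, and the inductive step handles the mixed products $a_{k'\,i'}a_{k\,j}$ by splitting into the case $k\leq j'$ (where $a_{k\,j}$ is already standard) and $k>j'$ (where $a_{k\,j}$ is deep but with a strictly smaller index gap, so the induction hypothesis applies). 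Because at $q=1$ the commutators vanish, there is no need for the commutation-reordering step of Case~2 in Lemma~\ref{sijspan}; the spanning is simply as polynomials in the standard variables.

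I expect the main obstacle to be the careful bookkeeping in the limit: one must confirm that every term of \eqref{scenspecific2} carrying a factor $(q-q^{-1})$ or $(q^2-1)$ coming from the substitution of $\overline{s}$ via Lemma~\ref{overss} either vanishes at $q=1$ or combines correctly, so that precisely the coefficients $\epsilon_{i}\epsilon_{k}$, $\epsilon_{i'}$ appearing in \eqref{pcen} remain. In particular the boundary term $q^{\overline{j}+\overline{i}}\overline{s}_{i\,i'}s_{i'\,j}$ in \eqref{scenspecific2} must be matched against the $\delta_{j\leq i'\leq i}\epsilon_{i'}a_{i'\,j}$ term, and one should check that the middle term $q\epsilon_i s_{j'\,i'}+\epsilon_{i'}q^{\overline{j}-\overline{i}}s_{i\,j}$ specializes to $\epsilon_{i'}(a_{j'\,i'}+a_{i\,j})$, consistent with the symmetry $a_{i\,j}=a_{j'\,i'}$ implied by \eqref{pcen} itself. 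Once this specialization is verified the algebraic content is identical to Lemma~\ref{sijspan}, so the spanning statement follows formally.
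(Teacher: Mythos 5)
You follow the same route as the paper --- specialize the central relation \eqref{scenspecific2} at $q=1$ through the $\mathbb{A}$-form, then mimic Lemma~\ref{sijspan} classically for the spanning claim --- but the step that produces the boundary term is wrong, and it is exactly the delicate point. You assert that ``$\overline{s}_{i\,i'}\to\overline{a}_{i\,i'}=\epsilon_{i'}$ by Lemma~\ref{overss}''. Lemma~\ref{overss} says this for the \emph{diagonal} entries, $\overline{s}_{i\,i}=\epsilon_{i'}$; for the \emph{anti-diagonal} entry appearing in \eqref{scenspecific2} it says instead $\overline{s}_{i\,i'}=-q^{2}s_{i\,i'}+(q^{2}-1)\sum_{m}(\cdots)$, whence $\overline{s}_{i\,i'}\big|_{q=1}=-a_{i\,i'}$. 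This is a generator of $\mathcal{P}_n$ (the anti-diagonal entries have $j'=i$, so they are among the PBW generators of Theorem~\ref{UPbw}), not a constant. The correct $q\to1$ limit of \eqref{scenspecific2} is therefore
\begin{equation*}
\sum_{k=j,\,k\neq i'}^{i-1}\epsilon_{i}\epsilon_{k}\,a_{k'\,i'}a_{k\,j}\;+\;\epsilon_{i'}a_{i\,j}\;-\;\delta_{j\leq i'\leq i}\,a_{i\,i'}\,a_{i'\,j}\;=\;0,
\end{equation*}
with a quadratic term $-a_{i\,i'}a_{i'\,j}$ where \eqref{pcen} displays $+\epsilon_{i'}a_{i'\,j}$.

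The difference is not cosmetic. For $n=2$ the printed \eqref{pcen} with $(i,j)=(3,1)$ gives $a_{4\,2}=a_{3\,1}+a_{2\,1}$, while with $(i,j)=(4,2)$ it gives $a_{4\,2}=a_{3\,1}-a_{2\,1}a_{3\,2}$; together these force $a_{2\,1}(1+a_{3\,2})=0$, which is impossible in the polynomial algebra $\mathcal{P}_2$ (equivalently, it contradicts the specialization of Theorem~\ref{UPbw}). The corrected relation above is consistent: for $(i,j)=(3,1)$ it returns exactly $a_{4\,2}=a_{3\,1}-a_{2\,1}a_{3\,2}$. So no valid computation can terminate at the displayed formula; your write-up reaches it only through the misquoted lemma, which is a genuine gap at the central step. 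Your auxiliary ``consistency check'' is also sign-incorrect: since $\epsilon_{i}=-\epsilon_{i'}$, the middle terms specialize to $\epsilon_{i}a_{j'\,i'}+\epsilon_{i'}a_{i\,j}=\epsilon_{i'}(a_{i\,j}-a_{j'\,i'})$, not $\epsilon_{i'}(a_{i\,j}+a_{j'\,i'})$, and the symmetry $a_{i\,j}=a_{j'\,i'}$ holds only for $i=j+1$, not in general. By contrast, the second half of your argument (the spanning claim) is sound and is unaffected by all of this: for a deep entry $a_{i\,j}$ with $i>j>i'$ one has $\delta_{j\leq i'\leq i}=0$, the coefficient of $a_{i\,j}$ is $\epsilon_{i'}\neq0$, every factor $a_{k'\,i'}$ lies in the standard range, and your induction on $i-j$ (with commutativity replacing the reordering step of Lemma~\ref{sijspan}) goes through.
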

    \begin{proof}
    It can be immediately deduced from (\ref{scenspecific2}) and direct calculations. 
    \end{proof}

We refer to the action of the braid group $\mathcal{B}_n$ on coideal subalgebras of quantized enveloping algebras  as presented in [\citenum{KP11}, Theorem 3.3] without providing the detailed proofs. 
We will verify the action of the braid group $\mathcal{B}_n$ on $U_q^{tw}(\mathfrak{gl}_n)$ in detail. 
For convenience in the calculation, in the expression of $\beta(s_{i\,j})$, $j'\geq i>j$, the elements $s_{k\,l}$ belonging to $\Omega_2$ are used. 
Actually, as shown in the Lemma \ref{sijspan}, $s_{k\,l}$ can be spanned by $s_{i\,j}$. 
In addition, we explicitly describe the action of $\mathcal{B}_n$ on the elements of the Poisson algebra $\mathcal{P}_n$, which preserves the Poisson bracket.
    \begin{lemma}
    	For $k, m=1,\cdots,n$, the assignment 
    	\begin{align*}
    	 \beta_k : s_{k+1}&\mapsto \frac{1}{q_k-q_k^{-1}}(q_ks_{k+1}s_k-s_ks_{k+1}), & k\neq n-1, n,\\
    		s_{k-1}&\mapsto \frac{1}{q_k-q_k^{-1}}(s_{k}s_{k-1}-q_ks_{k-1}s_{k}), & k\neq 1, n, \\
    		s_k&\mapsto -s_k,&\\
    		s_m&\mapsto s_m, & m\neq k-1, k, k+1,\\
    	\beta_{n-1} : s_{n}&\mapsto \frac{-[2]^{-1}}{(q-q^{-1})^2}(s_{n-1}^2s_{n}-q[2]s_{n-1}s_{n}s_{n-1}+q^2s_{n}s_{n-1}^2)-s_{n},&\\
            \beta_{n} : s_{n-1}&\mapsto \frac{\sqrt{-1}}{q^2-q^{-2}}(s_{n}s_{n-1}-q^2s_{n-1}s_{n}),&
    	\end{align*}
    	defines a series of automorphisms of the algebra $U_q^{tw}(\mathfrak{gl}_n)$.
    \end{lemma}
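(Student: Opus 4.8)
The plan is to transport the braid group action from the $\imath$quantum group to $U_q^{tw}(\mathfrak{gl}_n)$ through the isomorphism $\widetilde{\phi}$ established in Theorem \ref{sisoui}. By [\citenum{KP11}, Theorem 3.3] the braid group $\mathcal{B}_n$ acts on $\mathcal{U}^{\imath}$ by algebra automorphisms $T_k$, $k=1,\dots,n$, given explicitly on the generators $B_i$. Since $\widetilde{\phi}$ is an isomorphism, each conjugate $\widetilde{\phi}^{-1}\circ T_k\circ\widetilde{\phi}$ is automatically an algebra automorphism of $U_q^{tw}(\mathfrak{gl}_n)$, so invertibility and the homomorphism property come for free and none of the defining relations (\ref{sij})--(\ref{Scentre}) has to be checked directly. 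The entire argument thus reduces to identifying these conjugates with the maps $\beta_k$ stated in the lemma by a bookkeeping computation on generators.

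First I would fix the dictionary between the two generating sets. Recall $\widetilde{\phi}(s'_{m+1\,m})=(q_m-q_m^{-1})B_m$, that $s_m$ abbreviates $s_{m+1\,m}$, and that $s_m=s'_m$ for $m<n$ while $s_n=-\sqrt{-1}\,s'_n$ as recorded at the start of this section; hence $\widetilde{\phi}(s_m)=(q_m-q_m^{-1})B_m$ for $m<n$ and $\widetilde{\phi}(s_n)=-\sqrt{-1}(q^2-q^{-2})B_n$. Substituting this dictionary into the stated formulas, I would check that each $\beta_k$ agrees with $T_k$. For the generic indices $k\neq n-1,n$ the scalars $(q_k-q_k^{-1})$ cancel cleanly, so the quantum-commutator formulas for $\beta_k(s_{k\pm 1})$ collapse to the standard expressions $T_k(B_{k+1})=q_kB_{k+1}B_k-B_kB_{k+1}$ and $T_k(B_{k-1})=B_kB_{k-1}-q_kB_{k-1}B_k$, while $\beta_k(s_k)=-s_k$ matches $T_k(B_k)=-B_k$ and the remaining generators are fixed.

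The substantive part of the verification is the pair of exceptional cases attached to the long simple root $\alpha_n$. Here the Cartan entries $c_{n-1\,n}=-2$ and $c_{n\,n-1}=-1$ force the $\imath$Serre relation (\ref{sn-1sn}), whose inhomogeneous right-hand side is exactly what produces both the degree-two $s_{n-1}$-expression in the leading part of $\beta_{n-1}(s_n)$ and the extra linear summand $-s_n$. I would expand $T_{n-1}(B_n)$ from [\citenum{KP11}, Theorem 3.3], re-express it through $s_{n-1},s_n$ via the dictionary, and carefully track the factors $q_n=q^2$, the quantum integer $[2]$, and the rescaling $s_n=-\sqrt{-1}\,s'_n$; the analogous but shorter computation gives $\beta_n(s_{n-1})$, where the factor $\sqrt{-1}$ in the stated formula originates precisely from this rescaling. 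The main obstacle is therefore the scalar bookkeeping rather than any conceptual difficulty: matching the normalization constants $\tfrac{-[2]^{-1}}{(q-q^{-1})^2}$ and $\tfrac{\sqrt{-1}}{q^2-q^{-2}}$ against the quantum-integer coefficients appearing in (\ref{sn-1sn}) and in the braid operators of [\citenum{KP11}, Theorem 3.3].
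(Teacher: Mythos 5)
Your strategy is sound, but it is genuinely different from the paper's proof. The paper argues entirely inside the RTT realization: it takes the Serre-type presentation of $U_q^{tw}(\mathfrak{gl}_n)$ on the generators $s_1,\dots,s_n$ (this is where the identification with $\mathcal{U}^{\imath}$ enters implicitly), verifies by direct computation with the reflection relations (\ref{sre}) that the images of these generators under $\beta_k$ again satisfy those relations --- the indices $k\neq n-1,n$ being handled as in \cite{M08}, the new content being the long-root verifications (\ref{betan-1sn}) and (\ref{betan-1n-1}) --- and then exhibits each inverse $\beta_k^{-1}$ explicitly. You instead conjugate the operators $T_k$ of [\citenum{KP11}, Theorem 3.3] by the isomorphism $\widetilde{\phi}$ of Theorem \ref{sisoui}, so the homomorphism property and invertibility are inherited rather than computed, and only the generator dictionary remains; your bookkeeping of the normalizations is accurate (the factors $q_k-q_k^{-1}$ cancel, and the rescaling $s_n=-\sqrt{-1}\,s'_n$ is exactly what produces, and then absorbs, the $\sqrt{-1}$ in $\beta_n(s_{n-1})$, while $\beta_{n-1}(s_n)$ acquires no such factor because every term there is linear in $s_n$). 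This is a legitimate shortcut --- the paper itself attributes the action to [\citenum{KP11}, Theorem 3.3] --- and it even yields the braid relations established later in this section for free. What you give up: your argument stands or falls with the exact matching of the stated formulas against the conventions of \cite{KP11} (possibly up to replacing $T_k$ by $T_k^{-1}$ or composing with sign automorphisms), which is precisely the one step you have not carried out; and it produces none of the intermediate RTT identities, such as (\ref{betasn}) and the expressions for $\beta_{n-1}(s_{n+2\,n})$ and $\beta_{n-1}(s_{n+2\,n-1})$, that the paper's computational proof generates and then reuses in the subsequent lemma and in Theorem \ref{betas}, so those computations would still need to be done separately.
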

    \begin{proof}
We will first verify that the images of the generators $s_1,\cdots,s_n$ under $\beta_k$ satisfy the defining relations of  $U_q^{tw}(\mathfrak{gl}_n)$.
Specifically, if $k\neq n-1$, the situation is analogous to the case of  $U_q^{tw}(\mathfrak{o}_n)$, which was verified in [\citenum{M08}, Theorem 2.1]. 
The only nontrivial calculation required is to verify that the images of the pairs of generators $\beta_{n-1}(s_{k})$ and $\beta_{n-1}(s_{k+1})$ with $k=n-3, n-2, n-1, n$ satisfy both Serre relations and the images $\beta_{n-1}(s_{k})$ and $\beta_{n-1}(s_{k+2})$ commute. 
The algebra $U_q^{tw}(\mathfrak{gl}_n)$ is generated by the elements $s_1,s_2,\cdots, s_n$ subject only to the Serre relations 
    \begin{align*}
       &s_ks^2_{k+1}-(q+q^{-1})s_{k+1}s_ks_{k+1}+s^2_{k+1}s_k=-q^{-1}(q-q^{-1})^2s_{k},\qquad k\neq n-1, &\\
       &s^2_ks_{k+1}-(q+q^{-1})s_{k}s_{k+1}s_{k}+s_{k+1}s^2_k=-q^{-1}(q-q^{-1})^2s_{k+1},\qquad k\neq n-1,& \\
        &s_{n-1}s^2_{n}-(q^2-q^{-2})s_{n}s_{n-1}s_{n}+s^2_{n}s_{n-1}=q^{-2}(q^2-q^{-2})^2s_{n-1}, &\\
        &s^3_{n-1}s_n-(1+q^2+q^{-2})s^2_{n-1}s_ns^2_{n-1}-s_ns^3_{n-1}=-q^{-1}(q^2-q^{-2})^2(s_{n-1}s_{n}-s_ns_{n-1}). &
    \end{align*}
It follows from equation (\ref{sre}) that
    \begin{align}
        \beta_{n-1}: 
        s_{n-3}&\mapsto s_{n+4\, n+3},\\
        s_{n-2}&\mapsto s_{n+3\, n+1},\\
        s_{n-1}&\mapsto -s_{n+2\,n+1},\\
        s_{n}&\mapsto -qs_{n+2\,n+1}s_{n+1\,n-1}-qs_{n+2\,n-1}.\label{betasn}
    \end{align}
Hence, for $\beta_{n-1}(s_{n-3})$ and $\beta_{n-1}(s_{n-2})$,   it is straightforward to verify from equation (\ref{sre}) that 
    \begin{align*}
    &s_{n+4\,n+3}s^2_{n+3\,n+1}-(q+q^{-1})s_{n+3\,n+1}s_{n+4\,n+3}s_{n+3\,n+1}+s^2_{n+3\,n+1}s_{n+4\,n+3}\\
    &\qquad =-q^{-1}(q-q^{-1})^2s_{n+3\,n+1}.
    \end{align*}
Similarly, we obtain
    \begin{align*}
    &s^2_{n+4\,n+3}s_{n+3\,n+1}-(q+q^{-1})s_{n+4\,n+3}s_{n+3\,n+1}s_{n+4\,n+3}+s_{n+3\,n+1}s^2_{n+4\,n+3}\\
    &\qquad =-q^{-1}(q-q^{-1})^2s_{n+4\,n+3}.
    \end{align*}
Indeed, a more general relation exists,
    \begin{align*}
    &s_{i\,j}s^2_{j\,l}-(q+q^{-1})s_{j\,l}s_{i\,j}s_{j\,l}+s^2_{j\,l}s_{i\,j}=-q^{-1}(q-q^{-1})^2s_{j\,l},\\
    &s^2_{i\,j}s_{j\,l}-(q+q^{-1})s_{i\,j}s_{j\,l}s_{i\,j}+s_{j\,l}s^2_{i\,j}=-q^{-1}(q-q^{-1})^2s_{i\,j},
\end{align*}
    where $i>j>l>n$. 
  $\beta_{n-1}(s_{n-3})$ commute with both $\beta_{n-1}(s_{n-1})$ and $\beta_{n-1}(s_{n})$ by simple calculation.
We need to verify the relations for the images $\beta_{n-1}(s_{n-2})$ and $\beta_{n-1}(s_{n-1})$, 
    \begin{align} \label{p0}
    &s_{n+3\,n+1}s^2_{n+2\,n+1}-(q+q^{-1})s_{n+2\,n+1}s_{n+3\,n+1}s_{n+2\,n+1}+s^2_{n+2\,n+1}s_{n+3\,n+1}\\ \nonumber
    &\qquad=-q^{-1}(q-q^{-1})^2s_{n+3\,n+1}.
    \end{align}
The left-hand side of \eqref{p0} equals  
    \begin{align} \label{p1}
    &(s_{n+3\,n+1}s_{n+2\,n+1}-qs_{n+2\,n+1}s_{n+3\,n+1})s_{n+2\,n+1}\\ \nonumber
    &\qquad-q^{-1}s_{n+2\,n+1} (s_{n+3\,n+1}s_{n+2\,n+1}-qs_{n+2\,n+1}s_{n+3\,n+1}).
    \end{align}
Indeed, by (\ref{sre}), we have 
    \begin{align*}
    s_{n+3\,n+1}s_{n+2\,n+1}-qs_{n+2\,n+1}s_{n+3\,n+1}=-(q-q^{-1})s_{n+3\,n+2},
    \end{align*}
    which implies that the equation (\ref{p1}) can be written as
    \begin{align*}
    -(q-q^{-1})(s_{n+3\,n+2}s_{n+2\,n+1}-q^{-1}s_{n+2\,n+1}s_{n+3\,n+2})=-q^{-1}(q-q^{-1})^2s_{n+3\,n+1}.
    \end{align*}
Similarly, the following equation holds
     \begin{align*}
    &s^2_{n+3\,n+1}s_{n+2\,n+1}-(q+q^{-1})s_{n+3\,n+1}s_{n+2\,n+1}s_{n+3\,n+1}+s_{n+2\,n+1}s^2_{n+3\,n+1}\\
    &\qquad=-q^{-1}(q-q^{-1})^2s_{n+2\,n+1}.
    \end{align*}
Next we  prove that $\beta_{n-1}(s_{n-2})$ and $\beta_{n-1}(s_{n})$ commute.
    Specifically, we need to verify that
    \begin{align} \label{p2}
    s_{n+3\,n+1}(qs_{n+2\,n+1}s_{n+1\,n-1}+qs_{n+2\,n-1})=(qs_{n+2\,n+1}s_{n+1\,n-1}+qs_{n+2\,n-1})s_{n+3\,n+1}.
    \end{align}
By equation (\ref{sre}), one has
    \begin{align*}
    s_{n+3\,n+1}s_{n+2\,n+1}-qs_{n+2\,n+1}s_{n+3\,n+1}=(q-q^{-1})s_{n+3\,n+2},
    \end{align*}
    and
    \begin{align*}
    s_{n+3\,n+1}s_{n+2\,n-1}-qs_{n+2\,n-1}s_{n+3\,n+1}=(q-q^{-1})(s_{n+2\,n+1}s_{n+3\,n-1}-s_{n+3\,n+2}s_{n+1\,n-1}).
    \end{align*}
Thus, (\ref{p2}) becomes
    \begin{align*}
    &q^2s_{n+2\,n+1}s_{n+3\,n+1}s_{n+1\,n-1}+q(q-q^{-1})s_{n+3\,n+2}s_{n+1\,n-1}-qs_{n+2\,n+1}s_{n+1\,n-1}s_{n+3\,n+1}\\
    &\qquad+q(q-q^{-1})(s_{n+2\,n+1}s_{n+3\,n-1}-s_{n+3\,n+2}s_{n+1\,n-1})\\
    &=qs_{n+2\,n+1}(qs_{n+3\,n+1}s_{n+1\,n-1}-s_{n+1\,n-1}s_{n+3\,n+1})+q(q-q^{-1})s_{n+2\,n+1}s_{n+3\,n-1}=0.
    \end{align*}
Moreover, the Serre relations for the images $\beta_{n-1}(s_{n-1})$ and $\beta_{n-1}(s_n)$ are more complex than those for the other generators.
A detailed verification is required to show that
     \begin{align}
   &\beta^2_{n-1}(s_{n})\beta_{n-1}(s_{n-1})-(q^2+q^{-2})\beta_{n-1}(s_{n})\beta_{n-1}(s_{n-1})\beta_{n-1}(s_{n})+\beta_{n-1}(s_{n-1})\beta^2_{n-1}(s_{n})\nonumber\\ 
   &\quad =-q^{-2}(q^2-q^{-2})^2\beta_{n-1}(s_{n-1}), \label{betan-1sn} \\
   &\beta^3_{n-1}(s_{n-1})\beta_{n-1}(s_n)-(1+q^2+q^{-2})\beta^2_{n-1}(s_{n-1})\beta_{n-1}(s_n)\beta_{n-1}(s_{n-1})\nonumber\\
   &\quad+(1+q^2+q^{-2})\beta_{n-1}(s_{n-1})\beta_{n-1}(s_n)\beta^2_{n-1}(s_{n-1})-\beta_{n-1}(s_n)\beta^3_{n-1}(s_{n-1})\nonumber\\
    &=-q^{-1}(q^2-q^{-2})^2(\beta_{n-1}(s_{n-1})\beta_{n-1}(s_n)-\beta_{n-1}(s_n)\beta_{n-1}(s_{n-1})).\label{betan-1n-1}
    \end{align}
Given the expression
    \begin{align}
    &\beta_{n-1}(s_{n-1})\beta_{n-1}(s_n)-q^{-2}\beta_{n-1}(s_n)\beta_{n-1}(s_{n-1})\nonumber\\
     &\quad=\frac{[2]^{-1}}{(q-q^{-1})^2}(s^3_{n-1}s_n-(1+q^2+q^{-2})s^2_{n-1}s_ns_{n-1}+(1+q^2+q^{-2})s_{n-1}s_ns^2_{n-1}-s_ns^3_{n-1})\nonumber\\
     &\qquad +s_{n-1}s_n-q^{-2}s_ns_{n-1},\label{beta_n+2,n}
    \end{align}   
and using equation (\ref{sn-1sn}), the right-hand side of \eqref{beta_n+2,n} simplifies to
    \begin{align*}
    -q^{-2}s_{n-1}s_n+s_ns_{n-1}=q^{-2}(q^2-q^{-2})s_{n+1\,n-1}.
    \end{align*} 
Then, (\ref{betan-1sn}) becomes 
    \begin{align*}
    &\beta_{n-1}(s_{n})(\beta_{n-1}(s_{n})\beta_{n-1}(s_{n-1})-q^{2}\beta_{n-1}(s_{n-1})\beta_{n-1}(s_{n}))\\
    &\qquad -q^{-2}(\beta_{n-1}(s_{n})\beta_{n-1}(s_{n-1})-q^{2}\beta_{n-1}(s_{n-1})\beta_{n-1}(s_{n}))\beta_{n-1}(s_{n})\\
    &=-(q^2-q^{-2})(\beta_{n-1}(s_n)s_{n+1\,n-1}-q^{-2}s_{n+1\,n-1}\beta_{n-1}(s_n)).\\
    \end{align*}
Using (\ref{sre}) and (\ref{betasn}), the left-hand side of (\ref{betan-1sn}) can be expressed as 
    \begin{align*}
    &(q^2-q^{-2})(qs_{n+2\,n+1}s^2_{n+1\,n-1}+qs_{n+2\,n-1}s_{n+1,n-1}\\
    &\qquad -q^{-1}s_{n+1\,n-1}s_{n+2\,n+1}s_{n+1\,n-1}-q^{-1}s_{n+1\,n-1}s_{n+2\,n-1})\\
    &=(q^2-q^{-2})(qs_{n+2\,n+1}s_{n+1\,n-1}-q^{-1}s_{n+1\,n-1}s_{n+2\,n+1})s_{n+1\,n-1})\\
    &\qquad+(q^2-q^{-2})(qs_{n+2\,n-1}s_{n+1\,n-1}-q^{-1}s_{n+1\,n-1}s_{n+2\,n-1})=(q^2-q^{-2})^2q^{-2}s_{n+2\,n+1}.
    \end{align*}
So (\ref{betan-1sn}) is verified.
    Then for (\ref{betan-1n-1}), the left-hand side is equal to
     \begin{align*}
    &\frac{[2]^{-1}}{(q-q^{-1})^2}(s^3_{n-1}(s^2_{n-1}s_n-(q^2+1)s_{n-1}s_{n}s_{n-1})+q^2s_ns^2_{n-1})\\
    &\quad-(1+q^2+q^{-2})s^2_{n-1}(s^3_{n-1}(s^2_{n-1}s_n-(q^2+1)s_{n-1}s_{n}s_{n-1})+q^2s_ns^2_{n-1})s_{n-1})\\
    &\quad+(1+q^2+q^{-2})s_{n-1}(s^2_{n-1}s_n-(q^2+1)s_{n-1}s_{n}s_{n-1})+q^2s_ns^2_{n-1})s^2_{n-1}\\
    &\quad-(s^2_{n-1}s_n-(q^2+1)s_{n-1}s_{n}s_{n-1})+q^2s_ns^2_{n-1})s^3_{n-1})\\
    &\quad +s^3_{n-1}s_n-(1+q^2+q^{-2})s^2_{n-1}s_ns_{n-1}+(1+q^2+q^{-2})s_{n-1}s_ns^2_{n-1}-s_ns^3_{n-1}\\
     &=q^{-1}(q+q^{-1})(s^2_{n-1}(s_ns_{n-1}-s_{n-1}s_n)-(q^2+1)s_{n-1}(s_ns_{n-1}-s_{n-1}s_n)s_{n-1}\\
     &\quad +q^2(s_ns_{n-1}-s_{n-1}s_n)s^2_{n-1} +s^3_{n-1}s_n-(1+q^2+q^{-2})s^2_{n-1}s_ns_{n-1}\\
     &\quad +(1+q^2+q^{-2})s_{n-1}s_ns^2_{n-1}-s_ns^3_{n-1}\\
    &=-q^{-1}(q^2-q^{-2})^2(\beta_{n-1}(s_{n-1})\beta_{n-1}(s_n)-\beta_{n-1}(s_n)\beta_{n-1}(s_{n-1})).
    \end{align*}
   Thus, equation (\ref{betan-1n-1}) is confirmed.
Each $\beta_k$, for $k=1,\cdots, n$, defines a homomorphism $U_q^{tw}(\mathfrak{gl}_n)$.
Furthermore, each $\beta_k$ is invertible and is given by  
   \begin{align*}
    		\beta_k^{-1} : s_{k+1}&\mapsto \frac{1}{q_k-q_k^{-1}}(s_{k+1}s_k-q_ks_ks_{k+1}), & k\neq n-1,\\
    		s_{k-1}&\mapsto \frac{1}{q_k-q_k^{-1}}(q_ks_{k}s_{k-1}-s_{k-1}s_{k}), & k\neq n,\\
    		s_k&\mapsto -s_k,&\\
    		s_m&\mapsto s_m, & m\neq k-1, k, k+1,\\
    		\beta_{n-1}^{-1} : s_{n}&\mapsto \frac{-[2]^{-1}}{(q-q^{-1})^2}(s_{n}s_{n-1}^2-q[2]s_{n-1}s_{n}s_{n-1}+q^2s_{n-1}^2s_{n})-s_{n},&\\
            \beta_{n}^{-1} : s_{n-1}&\mapsto \frac{\sqrt{-1}}{q^2-q^{-2}}(q^2s_{n}s_{n-1}-s_{n-1}s_{n}).
    	\end{align*}
    \end{proof}
    \begin{lemma}
The actions of the automorphism $\beta_{n-1}$ and $\beta_n$ are given by 
      \begin{align}
        \beta_{n-1}:&s_{n+1\,n-1}\mapsto  -s_{n+2\,n},\quad  s_{n+2\,n}\mapsto -s_{n+1\,n-1}, \\
        &s_{n+2\,n-1}\mapsto q^2s_{n\,n-1}s_{n+1\,n-1}+(q^2-1)s_{n+2\,n-1}-qs_{n+1\,n},\label{betan-1sn+2n-1}\\
        \beta_{n}:&s_{n\,n-1}\mapsto\sqrt{-1}s_{n+2\,n},\quad s_{n+1\,n-1}\mapsto-\sqrt{-1}s_{n\,n-1},\\
        &s_{n+2\,n-1}\mapsto-s_{n+2\,n-1} \label{betansn+2n_1}.
    \end{align}
    \end{lemma}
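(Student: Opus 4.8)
The plan is to exploit that $\beta_{n-1}$ and $\beta_n$ are algebra automorphisms of $U_q^{tw}(\mathfrak{gl}_n)$, as established in the previous lemma. Consequently their effect on the non-simple root vectors $s_{n+1\,n-1}$, $s_{n+2\,n}$ and $s_{n+2\,n-1}$ is completely determined by their already-known effect on the simple generators $s_{n-1}=s_{n\,n-1}$ and $s_n=s_{n+1\,n}$ together with the homomorphism property. I work throughout in the $S$-presentation fixed at the beginning of this section, so that every auxiliary identity is read off directly from the reflection relation (\ref{sre}) and the central relation (\ref{scenspecific2}); this avoids the scalar mismatch between $s_{i\,j}$ and $s'_{i\,j}$ at non-simple indices that would arise from invoking Lemma \ref{fres}. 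First I record, as special cases of (\ref{sre}), the $q$-commutator presentations of the target vectors through $s_{n-1}$ and $s_n$, such as
\begin{align*}
(q^2-q^{-2})\,s_{n+2\,n}=s_{n+1\,n}s_{n\,n-1}-q^2s_{n\,n-1}s_{n+1\,n},
\end{align*}
and likewise for $s_{n+1\,n-1}$ and $s_{n+2\,n-1}$, together with the index identification $s_{n+2\,n+1}=s_{n\,n-1}$ supplied by (\ref{s'ii-1}).

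For $\beta_n$ the computation is short. Since $s_{n\,n-1}=s_{n-1}$ is itself simple, its image is read off from the previous lemma as $\beta_n(s_{n-1})=\tfrac{\sqrt{-1}}{q^2-q^{-2}}(s_ns_{n-1}-q^2s_{n-1}s_n)$, and the displayed presentation of $s_{n+2\,n}$ rewrites the right-hand side as $\sqrt{-1}\,s_{n+2\,n}$, which is the first assertion. For $\beta_n(s_{n+1\,n-1})$ and $\beta_n(s_{n+2\,n-1})$ I substitute $s_n\mapsto -s_n$ and $s_{n-1}\mapsto\sqrt{-1}\,s_{n+2\,n}$ into the corresponding $q$-commutator presentations and collapse the outcome with (\ref{sre}); the sign in (\ref{betansn+2n_1}) and the factor $-\sqrt{-1}$ emerge from the single $\sqrt{-1}$ already present in the definition of $\beta_n$.

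For $\beta_{n-1}$ I instead substitute the images $\beta_{n-1}(s_{n-1})=-s_{n+2\,n+1}$ and $\beta_{n-1}(s_n)=-q\,s_{n+2\,n+1}s_{n+1\,n-1}-q\,s_{n+2\,n-1}$ from (\ref{betasn}) into the same presentations, using $s_{n+2\,n+1}=s_{n\,n-1}$ to keep everything in the three vectors $s_{n-1}$, $s_{n+1\,n-1}$ and $s_{n+2\,n-1}$. Reordering the resulting quadratic expressions by (\ref{sre}), one recognizes combinations such as $q\,s_{n+2\,n+1}s_{n+1\,n}-s_{n+1\,n}s_{n+2\,n+1}$ as scalar multiples of $s_{n+2\,n}$ and $s_{n+1\,n-1}$, which produces $\beta_{n-1}(s_{n+1\,n-1})=-s_{n+2\,n}$ and $\beta_{n-1}(s_{n+2\,n})=-s_{n+1\,n-1}$; a useful consistency check is that $\beta_{n-1}$ then interchanges these two vectors up to sign, so $\beta_{n-1}^2$ fixes each.

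The main obstacle is the last formula (\ref{betan-1sn+2n-1}) for $\beta_{n-1}(s_{n+2\,n-1})$. After expanding $\beta_{n-1}$ of a $q$-commutator presentation of $s_{n+2\,n-1}$, one is left with a quadratic expression in $s_{n-1}$, $s_{n+1\,n-1}$ and $s_{n+2\,n-1}$, and collapsing it to the three-term answer $q^2s_{n\,n-1}s_{n+1\,n-1}+(q^2-1)s_{n+2\,n-1}-qs_{n+1\,n}$ requires repeated use of (\ref{sre}) to reorder the factors and one application of the central relation (\ref{scenspecific2}) to eliminate the spurious higher terms. No new idea beyond the reflection and central relations is needed; the only delicate point is the consistent bookkeeping of the $q$-powers and of the $\sqrt{-1}$ factors inherited from the $s$/$s'$ dictionary.
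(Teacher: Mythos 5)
Your proposal is correct and follows essentially the same route as the paper's own proof: both express $s_{n+1\,n-1}$, $s_{n+2\,n}$ and $s_{n+2\,n-1}$ through $q$-commutator presentations extracted from (\ref{sre}) (together with the identification $s_{n+2\,n+1}=s_{n\,n-1}$), apply the automorphism to these presentations, substitute the known images of $s_{n\,n-1}$ and $s_{n+1\,n}$, and collapse the result with further reflection relations. The only cosmetic difference is that you substitute forward images throughout, which forces a couple of auxiliary (\ref{sre})-identities such as $q^2s_{n+1\,n}s_{n+2\,n}-s_{n+2\,n}s_{n+1\,n}=(q^2-q^{-2})s_{n\,n-1}$, whereas the paper twice shortcuts via the inverse automorphisms $\beta_{n-1}^{-1}$ and $\beta_n^{-1}$; both variants are sound.
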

    \begin{proof}
In fact, it follows from equation (\ref{sre}) that
      \begin{align*}
    \beta_{n-1}(s_{n+2\,n})=\frac{1}{(q^2-q^{-2})}(\beta_{n-1}(s_{n})\beta_{n-1}(s_{n-1})-q^2\beta_{n-1}(s_{n-1})\beta_{n-1}(s_{n})).
    \end{align*}
Due to (\ref{beta_n+2,n}), we get  $\beta_{n-1}(s_{n+2\,n})=-s_{n+1\,n-1}.$
A similar calculation gives
     \begin{align*}
    s_{n+1\,n-1}=\frac{1}{(q^2-q^{-2})}(-q^2\beta_{n-1}^{-1}(s_{n})\beta_{n-1}^{-1}(s_{n-1})+\beta_{n-1}^{-1}(s_{n-1})q^2\beta_{n-1}^{-1}(s_{n})).
    \end{align*}
Subsequently, it follows that $\beta_{n-1}(s_{n+1\,n-1})=-s_{n+2\,n}$. 
To verify (\ref{betan-1sn+2n-1}), using (\ref{sre}), we find
     \begin{align*}
    &(q-q^{-1})(q^2-q^{-2})q^{-1}(\beta_{n-1}(s_{n+2\,n-1})-q^{-1}\beta_{n-1}(s_{n})) \nonumber \\
    &\qquad=\beta_{n-1}(s^2_{n-1}s_n-(q^2-q^{-2})s_{n-1}s_ns_{n-1}+s_ns^2_{n-1}).
     \end{align*}
   Further, it can be obtained directly that
    \begin{align*}
    &\beta_{n-1}(s^2_{n-1}s_n-(q^2-q^{-2})s_{n-1}s_ns_{n-1}+s_ns^2_{n-1}) \nonumber \\
    &\qquad =-q^{-1}(q^2-q^{-2})^2(\beta_{n-1}(s_n)+s_n)-(q-q^{-1})(q^2-q^{-2})q^{-1}(s_{n+2\,n-1}-q^{-1}s_n),
     \end{align*}
so one gets
    \begin{align*}
\beta_{n-1}(s_{n+2\,n-1})=&-q(\beta_{n-1}(s_n)+s_{n})-s_{n+2\,n-1}\\
=& q^2s_{n\,n-1}s_{n+1\,n-1}+(q^2-1)s_{n+2\,n-1}-qs_{n+1\,n}.
    \end{align*}
     Moreover, we can easily get $\beta_n(s_{n-1})=\sqrt{-1}s_{n+2\,n}$.
Since 
    \begin{align*}
    \beta_n^{-1}(s_{n\,n-1})=\frac{\sqrt{-1}}{(q^2-q^{-2})}(q^2s_{n+1\,n}s_{n\,n-1}-s_{n\,n-1}s_{n+1\,n})=\sqrt{-1}s_{n+1\,n-1},
    \end{align*}
    it follows that $\beta_{n}(s_{n+1\,n-1})=-\sqrt{-1}s_{n\,n-1}$.
Then, for (\ref{betansn+2n_1}), by (\ref{sre}), one has
     \begin{align*}
    (q-q^{-1})\beta_n(s_{n+2\,n-1}-q^{-1}s_{n+1\,n})=\beta_n(q^{-1}s_{n+1\,n-1}s_{n\,n-1}-qs_{n\,n-1}s_{n+1\,n-1}),
     \end{align*}
     which implies
     \begin{align*}
    (q-q^{-1})(\beta_n(s_{n+2\,n-1})+q^{-1}s_{n+1\,n})&=q^{-1}s_{n\,n-1}s_{n+2\,n}-qs_{n+2\,n}s_{n\,n-1}\\
    &=(q-q^{-1})q^{-1}s_{n+1\,n}-(q-q^{-1})s_{n+2\,n-1}.
     \end{align*}
     Thus, we get $\beta_n(s_{n+2\,n-1})=-s_{n+2\,n-1}$. The lemma is verified.
    \end{proof}
      \begin{theorem}
    For $k=1,\cdots,n$, $\beta_k$ defines an action of the braid group $\mathcal{B}_n$ on $U_q^{tw}(\mathfrak{gl}_n)$.
    \end{theorem}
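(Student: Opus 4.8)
The plan is to build on the preceding two lemmas, which already exhibit each $\beta_k$ as an automorphism of $U_q^{tw}(\mathfrak{gl}_n)$; what remains is to check that the family $\{\beta_1,\dots,\beta_n\}$ satisfies the defining relations of the braid group $\mathcal{B}_n$ of type $\operatorname{C}_n$. That group is presented by $\beta_1,\dots,\beta_n$ subject to the commutation relations $\beta_k\beta_l=\beta_l\beta_k$ for $|k-l|\geq 2$, the length-three relations $\beta_k\beta_{k+1}\beta_k=\beta_{k+1}\beta_k\beta_{k+1}$ for $1\leq k\leq n-2$, and the single length-four relation $\beta_{n-1}\beta_n\beta_{n-1}\beta_n=\beta_n\beta_{n-1}\beta_n\beta_{n-1}$ attached to the double bond. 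Since $U_q^{tw}(\mathfrak{gl}_n)$ is generated by $s_1,\dots,s_n$, each relation is an identity of automorphisms that may be verified by evaluating both sides on these generators and comparing the resulting elements, reducing everything to normally ordered form via relation (\ref{sre}).

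First I would treat the commutation relations. Each $\beta_k$ alters only the generators indexed by $\{k-1,k,k+1\}\cap\{1,\dots,n\}$ and fixes all others. For $|k-l|\geq 3$ these index windows are disjoint, so $\beta_k$ and $\beta_l$ act on disjoint sets of generators and commute trivially. The only delicate case is $l=k+2$, where the windows overlap in the single index $k+1$; here a direct computation of $\beta_k\beta_{k+2}$ and $\beta_{k+2}\beta_k$ on $s_{k+1}$ shows the two agree precisely because the non-adjacent generators $s_k$ and $s_{k+2}$ commute in $U_q^{tw}(\mathfrak{gl}_n)$ (the $c_{ij}=0$ case). The boundary instances, in which one index equals $n-1$ or $n$ and the special forms of $\beta_{n-1}$ or $\beta_n$ intervene, are handled by exactly the same argument, since the overlapping generators are again non-adjacent and hence commute.

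Next come the length-three relations. For $k\leq n-3$ both $\beta_k$ and $\beta_{k+1}$ act through the generic formulas, and the verification of $\beta_k\beta_{k+1}\beta_k=\beta_{k+1}\beta_k\beta_{k+1}$ on $s_{k-1},s_k,s_{k+1},s_{k+2}$ proceeds exactly as for the orthogonal coideal subalgebra in [\citenum{M08}, Theorem 2.1], relying on (\ref{sre}) and the cubic Serre relations. The remaining instance $k=n-2$ also has Coxeter exponent three, but now $\beta_{n-1}$ acts on $s_n$ through its special formula; evaluating the two compositions on $s_{n-3},s_{n-2},s_{n-1},s_n$ therefore requires the image (\ref{betasn}) of $s_n$ and still collapses to relation (\ref{sre}) together with the Serre relations, so this case is only computationally heavier rather than structurally different.

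The main obstacle is the length-four relation $\beta_{n-1}\beta_n\beta_{n-1}\beta_n=\beta_n\beta_{n-1}\beta_n\beta_{n-1}$ at the double bond. To establish it I would evaluate both four-fold compositions on the two generators $s_{n-1}$ and $s_n$, tracking the auxiliary root vectors $s_{n+1\,n-1}$, $s_{n+2\,n}$ and $s_{n+2\,n-1}$ that arise as intermediate images; their transformation rules under $\beta_{n-1}$ and $\beta_n$ are recorded in (\ref{betasn}), (\ref{betan-1sn+2n-1}) and (\ref{betansn+2n_1}) and are exactly what is needed to push the computation through. Each application of $\beta_{n-1}$ or $\beta_n$ replaces a generator by a quadratic or cubic expression, so the four-fold composite must be repeatedly reordered using (\ref{sre}) and reduced against the quartic Serre relation (\ref{sn-1sn}); the scalar $\sqrt{-1}$ occurring in $\beta_n$ and the quantum integer $[2]$ in $\beta_{n-1}$ have to be carried carefully, but the intermediate identities already proved in the two preceding lemmas make every reduction routine. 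Once both sides are brought to the same normally ordered form, all braid relations hold as automorphisms of $U_q^{tw}(\mathfrak{gl}_n)$, and the theorem follows.
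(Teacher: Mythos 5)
Your proposal is correct and follows essentially the same route as the paper: both take the automorphism property from the preceding lemmas as given, reduce the braid relations to evaluation on the generators $s_1,\dots,s_n$, dispatch the commutation and length-three relations by the type-A argument of [\citenum{M08}, Theorem 2.1], and concentrate the real work on the length-four relation at the double bond, verified on $s_{n-1}$ and $s_n$ using the auxiliary transformation rules (\ref{betasn}), (\ref{betan-1sn+2n-1}), (\ref{betansn+2n_1}) together with (\ref{sre}). Your treatment is, if anything, slightly more explicit than the paper's about why the commuting cases and the $k=n-2$ length-three case go through.
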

    \begin{proof}
We have verified that $\beta_k$ defines an automorphism on $U_q^{tw}(\mathfrak{gl}_n)$.
In addition, we need to verify that the automorphisms $\beta_k$ satisfy the braid group relations. 
For each generator $s_k$, it is necessary to confirm that 
    \begin{align}
    &\beta_k\beta_{k+1}\beta_k(s_i)=\beta_{k+1}\beta_k\beta_{k+1}(s_i),& k\neq n-1, n,\label{betakbetak+1}\\
        &\beta_n\beta_{n-1}\beta_n\beta_{n-1}(s_{i})=\beta_{n-1}\beta_n\beta_{n-1}\beta_n(s_{i}),\label{betanbetan-1}\\
        &\beta_k\beta_l(s_i)=\beta_{l}\beta_{k}(s_i), &|k-l|>1,\label{betakbetal}
    \end{align}
    for $k=1,\cdots, n-1$. 
Actually, the proofs of (\ref{betakbetak+1}) and (\ref{betakbetal}) are similar to the cases of type A in [\citenum{M08}, Theorem 2.1].
The only nontrivial case is (\ref{betanbetan-1}).
    Thus, for $k=n-1$, we have   
     \begin{align*}
     &\beta_n\beta_{n-1}\beta_n\beta_{n-1}(s_{n-1})=\beta_n\beta_{n-1}\beta_n(-s_{n-1})=\beta_n\beta_{n-1}(-\sqrt{-1}s_{n+2\,n})=s_{n-1},\\
     &\beta_{n-1}\beta_n\beta_{n-1} \beta_n(s_{n-1})=\beta_{n-1}\beta_n\beta_{n-1}(\sqrt{-1}s_{n+2\,n})=\beta_{n-1}\beta_n(-\sqrt{-1}s_{n+1\,n-1})=s_{n-1}.
    \end{align*}
    Moreover, for $k=n$, it follows that
       \begin{align*}
    \beta_{n-1}\beta_n\beta_{n-1}\beta_n(s_{n})&=\beta_{n-1}\beta_n\beta_{n-1}(-s_{n})=\beta_{n-1}\beta_n(qs_{n\,n-1}s_{n+1\,n-1}+qs_{n+2\,n-1}\\
    &=\beta_{n-1}(qs_{n+2\,n}s_{n\,n-1}-qs_{n+2\,n-1}))\\
    &=qs_{n+1\,n-1}s_{n\,n-1}-q^3s_{n\,n-1}s_{n+1\,n-1}-q(q^2-1)s_{n+2\,n-1}+q^2s_{n+1\,n},\\
    \beta_n\beta_{n-1}\beta_n\beta_{n-1}(s_{n})&=-\beta_n(qs_{n+1\,n-1}s_{n\,n-1}-q^3s_{n\,n-1}-q(q^2-1)s_{n+2\,n-1}+q^2s_{n+1\,n})\\
    &=-qs_{n\,n-1}s_{n+2\,n}+q^3s_{n+2\,n}s_{n\,n-1}-q(q^2-1)s_{n+2\,n-1}+q^2s_{n+1\,n}.
    \end{align*}
    Using (\ref{sre}), we observe that
    \begin{align*}
        q^{-1}s_{n+1\,n-1}s_{n\,n-1}-qs_{n\,n-1}s_{n+1\,n-1}=-q^{-1}s_{n\,n-1}s_{n+2\,n}+qs_{n+2\,n}s_{n\,n-1},
    \end{align*}
    which implies that  $\beta_{n-1}\beta_n\beta_{n-1}\beta_n(s_{n})= \beta_n\beta_{n-1}\beta_n\beta_{n-1}(s_{n})$.
    \end{proof}
To define the action of the braid group $\mathcal{B}_n$ on the Poisson algebra $\mathcal{P}_n$, we first examine the action of the braid group on $s_{i\,j}$, and then degenerate this action to $a_{i\,j}$.
    \begin{theorem}\label{betas}
In terms of the generators $s_{i\,j}$ of the algebra $U_q^{tw}(\mathfrak{gl}_n)$ with  $i> j\geq j'$, for each index $k=1,\cdots,n-2$, the action of $\beta_{k}$ and  $\beta_{n-1}$, $\beta_{n}$ is given by
    \begin{align}
    \beta_{k}:
    s_{k+1\,k}&\mapsto -s_{k+1\,k},\label{ksk+1k}&&\\
    s_{l\,k}&\mapsto q^{-1}s_{l\,k+1}-s_{l\,k}s_{k+1\,k},\quad s_{l\,k+1}\mapsto s_{l\,k}, &k+2\leq l\leq k'-2,& \label{slk}\\ 
    s_{k\,l}&\mapsto qs_{k+1\,l}-qs_{k+1\,k}s_{k\,l},\quad \  s_{k+1\,l}\mapsto s_{k\,l},\label{ksk+1l} & l\leq k-1,&\\
    s_{k'-1\,k+1}&\mapsto q^{-1}s_{k'-1\,k}s_{k+1\,k}+q^{-1}s_{k'\,k}+(1-q^{-2})s_{k'-1\,k+1},&&\label{sk'-1k+1}\\
    s_{k'-1\,k}&\mapsto q^{\overline{k'-1}-\overline{k}+1}s_{k'\,k}-q^{2\overline{k+1}+1}\beta_k(s_{k'-1\,k+1})s_{k+1\,k}\nonumber\\
    &\quad +\sum_{t=k+2}^{k'-2}q^{\overline{k'-1}-\overline{t}+1}\epsilon_ts_{t'\,k}\beta_k(s_{t\,k}),
    \end{align}
    \begin{align}
    s_{k'-1\,l}&\mapsto qs_{k'\,k'-1}s_{k'-1\,l}+qs_{k'\,l}, & l<k,& \label{sk'-1l}\\
    s_{k'\,l}&\mapsto s_{k'-1\,l},& l<k,& \label{sk'l}\\
    s_{k'\,k}&\mapsto -s_{k'-1\,k}s_{k+1\,k}+q^{-1}s_{k'-1\,k+1}, && \label{sk'k}\\
    s_{k'+1\,k-1}&\mapsto s_{k'+1\,k-1}+q^{-1}s_{k'-1\,k}s_{k'\,k'-1}&& \nonumber\\
    &\quad -q^{-2}s_{k'-1\,k+1}-q^{-1}\beta_k(s_{k'\,k}),\\
    s_{m'\,m}&\mapsto s_{m'\,m}-q^{-1}s_{m'-1\,m+1}+q^{-1}\beta_k(s_{m'-1\,m+1}),&m\neq k-1,k,k+1,&\\
     s_{m\,l}&\mapsto s_{m\,l}, &\text{otherwise},& \nonumber
\end{align}
\begin{align}
    \beta_{n-1}:  
     s_{n\,n-1}&\mapsto -s_{n\,n-1},&& \label{snn-1}\\
     s_{n+1\,n-1}&\mapsto -s_{n+2\,n-1},&&\\
     s_{n-1\,l}&\mapsto qs_{n\,l}- qs_{n\,n-1}s_{n-1\,l},\quad s_{n\,l}\mapsto s_{n-1\,l}, &l\leq n-2,&\\
     s_{n+1\,n}&\mapsto -qs_{n+2\,n+1}s_{n+1\,n-1}-qs_{n+2\,n-1},&&\\
     s_{n+2\,n-1}&\mapsto (q^2-1)s_{n+2\,n-1}-qs_{n+1\,n}&& \nonumber \\
     &\quad +q^2s_{n\,n-1}s_{n+1\,n-1},&&\label{sn+2n-1} \\
     s_{n+1\,l}&\mapsto -qs_{n+2\,l}+qs_{n\,n-1}(\sum_{t=n+3}^{l'}q^{\overline{t}-\overline{n+2}}s_{t\,n}s_{t'\,l})& &\nonumber \\
     &\quad +qs_{n+2\,n-1}s_{n-1\,l}-s_{n+1\,n}s_{n-1\,l},&l\leq n-2,&\label{sn+1l}\\
    s_{n+2\,l}&\mapsto -s_{n+1\,l}, &l \leq n-1,&\\
    s_{n+3\,n-2}&\mapsto -s_{n+3\,n-2}-q^{-1}s_{n+1\,n-1}s_{n\,n-1}&& \nonumber \\
    &\quad +q^{-2}s_{n+1\,n}+q^{-1}\beta_{n-1}(s_{n+2\,n-1}),&&\\
    s_{m'\,m}&\mapsto -s_{m'\,m}+q^{-1}s_{m'-1\,m+1}\nonumber\\
    &\quad +q^{-1}\beta_{n-1}(s_{m'-1\,m+1}), &m\geq n+4&,\\
    s_{m\,l}&\mapsto -s_{m\,l}, &m\geq n+4, l\leq n-3,&\\
     s_{m\,l}&\mapsto s_{m\,l}, &\text{otherwise},&
\end{align}
\begin{align}
     \beta_{n}:
      s_{n+1\,n}&\mapsto -s_{n+1\,n}, &&\\
      s_{n\,l}&\mapsto\sqrt{-1}(qs_{n+1\,l}-qs_{n+1\,n}s_{n\,l}),\ s_{n+1\,l}\mapsto -\sqrt{-1}s_{n\,l}, &l\leq n-1,&\\ 
      s_{n+2\,n-1}&\mapsto -s_{n+2\,n-1},&&\\
      s_{m'\,m}&\mapsto -s_{m'\,m}+q^{-1}s_{m'-1\,m+1}-q^{-1}\beta_n(s_{m'-1\,m+1}),&m\geq n+3,&\\
      s_{m\,l}&\mapsto s_{m\,l}, &\text{otherwise.}&
    \end{align}
    \end{theorem}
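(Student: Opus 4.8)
The plan is to use the fact that each $\beta_k$ has already been established as an algebra automorphism of $U_q^{tw}(\mathfrak{gl}_n)$ whose values on the simple generators $s_k=s_{k+1\,k}$, $1\le k\le n$, are fixed by the preceding lemmas. Since every root vector $s_{i\,j}$ with $j'\ge i>j$ is a noncommutative polynomial in the simple generators — obtained by iterating the recursion (\ref{sijgen}) of Lemma \ref{fres}, which expresses $s_{i\,j}$ through $s_{i\,i-1}$ and the strictly shorter vector $s_{i-1\,j}$ — the image $\beta_k(s_{i\,j})$ is completely determined by multiplicativity. Accordingly, for each target $s_{i\,j}$ I would substitute the known values $\beta_k(s_m)$ into such a polynomial expression, expand using $\beta_k(xy)=\beta_k(x)\beta_k(y)$, and then collapse the result back into a short expression in the $s_{i\,j}$ by repeated application of the reflection relations (\ref{sre}). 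The whole verification is naturally organized as an induction on the length $|i-j|$ of the root vector, with the images computed in the two preceding lemmas serving as base cases.

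First I would dispose of the \emph{generic} root vectors, namely those $s_{l\,k}$, $s_{k\,l}$, $s_{k'-1\,l}$, $s_{k'\,l}$ whose formation interacts with the special generator $s_k$ only through a single $q$-commutator. For these the computation has the same shape as the type-A braid action verified in \cite{M08}: one feeds $\beta_k(s_k)=-s_k$ into the recursion and applies (\ref{sre}) once, producing the plain shift formulas such as (\ref{slk}), (\ref{ksk+1l}), (\ref{sk'-1l}) and (\ref{sk'l}). This reduces the problem to the finitely many entries that genuinely involve the coideal structure.

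The hard part will be the anti-diagonal entries $s_{m'\,m}$ together with the near-fold vectors $s_{k'-1\,k+1}$, $s_{k'\,k}$, $s_{k'-1\,k}$ and $s_{n+2\,n-1}$, where multiplicativity alone does not close the calculation because the expansion generates products lying in $\Omega_2$. These must be rewritten using the central relation (\ref{nScen})/(\ref{scenspecific2}) and Lemma \ref{overss}, which expresses $\overline{s}_{j'\,j}$ through the $s_{m'\,m}$, exactly as in the weak-PBW reduction of Section 4. The case $k=n-1$ is the most delicate: here $s_{n-1}$ and $s_n$ obey the higher-order Serre relation (\ref{sn-1sn}) rather than the ordinary quantum Serre relation, so the cubic expressions in $s_{n-1}$ produced by expanding $\beta_{n-1}(s_n)$ can only be collapsed by invoking (\ref{sn-1sn}); keeping simultaneous track of this, of the central reductions, and of the $\sqrt{-1}$ normalization inherited from $C=\sqrt{-1}J$ in the $\beta_n$ formulas is where the bookkeeping is heaviest. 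I expect this central/anti-diagonal block to be the main obstacle, while the remaining entries follow by the straightforward, if lengthy, induction on $|i-j|$ using (\ref{sre}) and the already-computed boundary values.
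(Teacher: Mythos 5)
Your proposal follows essentially the same route as the paper's own proof: exploit the already-established automorphism property of each $\beta_k$ together with its known values on the simple generators, express the higher root vectors through $q$-commutator recursions coming from (\ref{sre}), collapse the expanded images back using (\ref{sre}), and invoke the central relation (\ref{scenspecific2})/Lemma \ref{overss} precisely for the fold-crossing entries such as $s_{k'-1\,k}$, with the remaining cases handled by induction from the boundary values computed in the preceding lemmas. Your identification of the anti-diagonal block and the $k=n-1$ case (where the higher-order Serre relation (\ref{sn-1sn}) enters) as the genuinely hard steps matches exactly where the paper concentrates its explicit computations, so the plan is sound and faithful to the paper's argument.
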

    \begin{proof}
From (\ref{ksk+1k})--(\ref{ksk+1l}), the cases are similar to  [\citenum{M08}, Corollary 2.2] of type A.
Since $\beta_k(s_m)=s_m$ for $k\neq k-1,k,k+1$, it follows that $\beta_k(s_{m\,l})=s_{m\,l}$, for $m>l\geq m'$, $m\geq k+3$. 
For $k\leq n-2$, given that
    \begin{align*}
    q^{-1}s_{k'-2\,k+1}s_{k+2\,k+1}-qs_{k+2\,k+1}s_{k'-2\,k+1}=-(q-q^{-1})(q^{-1}s_{k'-2\,k+2}+s_{k'-1\,k+1}),
    \end{align*} 
we find that 
    \begin{align*}
    (q-q^{-1})\beta_k(s_{k'-1\,k+1})=q^{-1}s_{k'-2\,k}s_{k+2\,k}-qs_{k+2\,k}s_{k'-2\,k}+(q-q^{-1})q^{-1}s_{k'-2\,k+2}.
    \end{align*}
Using equation (\ref{sre}), we deduce
    \begin{align*}
    &q^{-1}s_{k'-2\,k}s_{k+2\,k}-qs_{k+2\,k}s_{k'-2\,k}=\\
    &\qquad -(q-q^{-1})q^{-1}(s_{k'-2\,k+2}-s_{k'-1\,k}s_{k+1\,k}-s_{k'\,k}-(q-q^{-1})s_{k'-1\,k+1}),
    \end{align*}
which confirms that (\ref{sk'-1k+1}) holds.
From the relation
    \begin{align*}
    \beta_k^{-1}(s_{k+1})=\frac{1}{(q-q^{-1})}(s_{k+1}s_{k}-qs_{k}s_{k+1}),
    \end{align*}
and using $s_k=s_{k'}$, we get $\beta_k(s_{k'\,k'-2})=s_{k'-1\,k'-2}$. 
The equation implies $\beta_k(s_{k'\,k+1})=s_{k'-1\,k}$. 
By 
    \begin{align*}
    q^{\overline{k}-\overline{k'-1}}s_{k'-1\,k}=qs_{k'\,k+1}+\sum_{t=k+1}^{k'-2}q^{\overline{k}-\overline{t}+1}\epsilon_{t}s_{t'\,k+1}s_{t\,k},
    \end{align*}
it follows that
    \begin{align*}
    \beta_{k}(s_{k'-1\,k})=&q^{\overline{k'-1}-\overline{k}+1}s_{k'\,k}-q^{2\overline{k+1}+1}\beta_k(s_{k'-1\,k+1})s_{k+1\,k}+\sum_{t=k+2}^{k'-2}q^{\overline{k'-1}-\overline{t}+1}\epsilon_ts_{t'\,k}\beta_k(s_{t\,k}),
    \end{align*}
    We can easily obtain $\beta_{k}(s_{k'-2\,l})=s_{k'-2\,l}$, $l<k$, and 
    \begin{align*}
    \beta_{k}(s_{k'-1\,k'-2})=qs_{k'\,k'-2}+qs_{k'\,k'-1}s_{k'-1\,k'-2}.
    \end{align*}
    However, for $l<k$, we have 
    \begin{align*}
    \beta_{k}(s_{k'-1\,l})&=\frac{-1}{(q-q^{-1})}(q \beta_{k}(s_{k'-1\,k'-2})s_{k'-2\,l}-s_{k'-2\,l} \beta_{k}(s_{k'-1\,k'-2}))\\
    &=\frac{-1}{(q-q^{-1})}(q(qs_{k'\,k'-2}s_{k'-2\,l}-s_{k'-2\,l}s_{k'\,k'-2})\\
    &\quad +q(qs_{k'\,k'-1}s_{k'-1\,k'-2}s_{k'-2\,l}-s_{k'-2\,l}s_{k'\,k'-1}s_{k'-1\,k'-2}))\\
    &=qs_{k'\,l}+qs_{k'\,k'-1}s_{k'-1\,l},
    \end{align*}
    then (\ref{sk'-1l}) is verified.
(\ref{sk'l}) can be directly obtained by 
    \begin{align*}
    (q-q^{-1})\beta_k(s_{k'\,l})=\beta_k(qs_{k'\,k+1}s_{k+1\,l}-s_{k+1\,l}s_{k'\,k+1}).
    \end{align*}
Given the relation from (\ref{sre})
    \begin{align*}
    qs_{k'\,k+1}s_{k+1\,k}-q^{-1}s_{k+1\,k}s_{k'\,k+1}=(q-q^{-1})(s_{k'\,k}-q^{-1}s_{k'-1\,k+1}),
    \end{align*}
the following equation holds
    \begin{align*}
    (q-q^{-1})(\beta_{k}(s_{k'\,k})-q^{-1}\beta_k(s_{k'-1\,k+1})=-qs_{k'-1\,k}s_{k+1\,k}+q^{-1}s_{k+1\,k}s_{k'-1\,k}.
    \end{align*}
    While 
    \begin{align*}
    q^{-1}s_{k'-1\,k}s_{k+1\,k}-qs_{k+1\,k}s_{k'-1\,k}=-(q-q^{-1})(q^{-1}s_{k'-1\,k+1}-s_{k'\,k}),
    \end{align*}
 (\ref{sk'k}) can be obtained by calculation. 
Moreover, (\ref{snn-1})--(\ref{sn+2n-1}) have been verified in the previous text. 
We have proved $\beta_{n-1}(s_{n+2\,n})=-s_{n+1\,n-1}$ in Lemma 6.5. 
We can easily obtain $\beta_{n-1}(s_{l\,n})=-s_{l\,n-1}$, $l>n'$. For $l\leq n-2$, given the relation
    \begin{align*}
    s_{n+1\,n}s_{n\,l}-s_{n\,k}s_{n+1\,n}=(q-q^{-1})(q^{-1}s_{n+1\,l}+\sum_{t=n+2}^{l'}q^{\overline{t}-\overline{n+1}}s_{t\,n}s_{t'\,l}),
    \end{align*}
it follows that
    \begin{align*}
    \beta_{n-1}(s_{n+1\,l})=\frac{q}{(q-q^{-1})}\beta_{n-1}(s_{n+1\,n}s_{n\,l}-s_{n\,k}s_{n+1\,n})-\sum_{t=n+2}^{l'}q^{\overline{t}-\overline{n+1}+1}s_{t\,n-1}\beta_{n-1}(s_{t'\,l}).
    \end{align*}
(\ref{sn+1l}) can be confirmed by direct calculation. 
Other remaining cases can be verified similarly using induction. 
    \end{proof}
    \begin{corollary}
For each index $k=1,\cdots,n-2$, the braid group actions of $\beta_{k}$ and  $\beta_{n-1}$, $\beta_{n}$ on the Poisson algebra $\mathcal{P}_n$ are given by
    \begin{align*}
    \beta_{k}:
    a_{k+1\,k}&\mapsto -a_{k+1\,k},&&\\
    a_{l\,k}&\mapsto a_{l\,k+1}-a_{l\,k}a_{k+1\,k},\qquad a_{l\,k+1}\mapsto a_{l\,k}, &k+2\leq l\leq k'-2,& \\ 
    a_{k\,l}&\mapsto a_{k+1\,l}-a_{k+1\,k}a_{k\,l},\qquad \  a_{k+1\,l}\mapsto a_{k\,l}, & l\leq k-1,&\\
    a_{k'-1\,k+1}&\mapsto a_{k'-1\,k}a_{k+1\,k}+a_{k'\,k},&&\\
    a_{k'-1\,k}&\mapsto a_{k'\,k}-\beta_k(a_{k'-1\,k+1})a_{k+1\,k}
    +\sum_{t=k+2}^{k'-2}\epsilon_ta_{t'\,k}\beta_k(a_{t\,k}),\\
    a_{k'-1\,l}&\mapsto a_{k'\,k'-1}a_{k'-1\,l}+a_{k'\,l},& l<k,& \\
    a_{k'\,l}&\mapsto a_{k'-1\,l},& l<k,& \\
    a_{k'\,k}&\mapsto -a_{k'-1\,k}a_{k+1\,k}+a_{k'-1\,k+1}, && \\
    a_{m'\,m}&\mapsto a_{m'\,m}-a_{m'-1\,m+1}+\beta_k(a_{m'-1\,m+1}),&m\neq k-1,k,k+1,&\\
    a_{m\,l}&\mapsto a_{m\,l}, &\text{otherwise},&\\
    \beta_{n-1}:  
     a_{n\,n-1}&\mapsto -a_{n\,n-1},&& \\
     a_{n+1\,n-1}&\mapsto -a_{n+2\,n-1},&&\\
     a_{n-1\,l}&\mapsto a_{n\,l}- a_{n\,n-1}a_{n-1\,l},\quad a_{n\,l}\mapsto a_{n-1\,l}, &l\leq n-2,&\\
     a_{n+1\,n}&\mapsto -a_{n+2\,n+1}a_{n+1\,n-1}-a_{n+2\,n-1},&&\\
     a_{n+2\,n-1}&\mapsto a_{n\,n-1}a_{n+1\,n-1}-a_{n+1\,n},&&  \\
     a_{n+1\,l}&\mapsto -a_{n+2\,l}+\sum_{t=n+3}^{l'}a_{n\,n-1}a_{t\,n}a_{t'\,l}
    +(a_{n+2\,n-1}-a_{n+1\,n})a_{n-1\,l},&l\leq n-2,&\\
    a_{n+2\,l}&\mapsto -a_{n+1\,l}, &l \leq n-1,&\\
    a_{n+3\,n-2}&\mapsto -a_{n+3\,n-2}-a_{n+1\,n-1}a_{n\,n-1}+ a_{n\,n-1}a_{n+1\,n-1},&&\\
     a_{n+3\,l}&\mapsto-a_{n+3\,l}, \quad a_{m\,l}\mapsto -a_{m\,l}, &m\geq n+4, l\leq n-3,&\\
    a_{m'\,m}&\mapsto -a_{m'\,m}+a_{m'-1\,m+1}+\beta_{n-1}(a_{m'-1\,m+1}), &m\geq n+4&,\\
     a_{m\,l}&\mapsto a_{m\,l}, &\text{otherwise},&\\
     \beta_{n}:
      a_{n+1\,n}&\mapsto -a_{n+1\,n}, &&\\
      a_{n\,l}&\mapsto  \sqrt{-1}(a_{n+1\,l}-a_{n+1\,n}a_{n\,l}),\quad a_{n+1\,l}\mapsto -\sqrt{-1}a_{n\,l}, &l\leq n-1,&\\
      a_{n+2\,n-1}&\mapsto -a_{n+2\,n-1},&&\\
      a_{m'\,m}&\mapsto -a_{m'\,m}+a_{m'-1\,m+1}-\beta_n(a_{m'-1\,m+1}),&m\geq n+3,&\\
      a_{m\,l}&\mapsto a_{m\,l}, &\text{otherwise.}&
    \end{align*}
  \end{corollary}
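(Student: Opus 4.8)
The plan is to derive the corollary entirely by specialization at $q=1$: each braid automorphism $\beta_k$ of $U_q^{tw}(\mathfrak{gl}_n)$, already shown above to generate an action of $\mathcal{B}_n$, descends to a Poisson automorphism $\overline{\beta}_k$ of $\mathcal{P}_n$ via the isomorphism (\ref{S-PIso}), and the formulas listed in the corollary are read off directly from Theorem \ref{betas} by letting $q\to 1$.

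First I would verify that each $\beta_k$ preserves the $\mathbb{A}$-form $U'_\mathbb{A}$. Inspecting the explicit images in Theorem \ref{betas}, every $\beta_k(s_{i\,j})$ is a polynomial in the generators $s_{i\,j}$ whose coefficients lie in $\mathbb{A}=\C[q,q^{-1}]$ (together with the harmless constant $\sqrt{-1}$ for $\beta_n$); the same holds for $\beta_k^{-1}$, since the inverse acts by $\mathbb{A}$-polynomial formulas on the generators $s_k$ and the remaining $s_{i\,j}$ are $\mathbb{A}$-polynomial expressions in the $s_k$ through Lemma \ref{fres}. As $\beta_k$ is a $\C(q)$-algebra automorphism fixing $q$, it therefore restricts to an $\mathbb{A}$-algebra automorphism of $U'_\mathbb{A}$. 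Applying $-\otimes_\mathbb{A}\C$ at $q=1$, under which $s_{i\,j}\mapsto a_{i\,j}$, yields an induced $\C$-algebra automorphism $\overline{\beta}_k$ of $\mathcal{P}_n$, and the braid relations holding among the $\beta_k$ are preserved by this functorial specialization, so the $\overline{\beta}_k$ again generate an action of $\mathcal{B}_n$.

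Next I would compute the specialized formulas branch by branch. Setting $q=1$ in each line of Theorem \ref{betas}, every pure power $q^m$ becomes $1$, every coefficient of the form $(1-q^{-2})$, $(q^2-1)$ or $(q-q^{-1})$ collapses to $0$ so that its term drops out, and every product $s_{i\,j}s_{k\,l}$ becomes $a_{i\,j}a_{k\,l}$; the nested images $\beta_k(s_{k'-1\,k+1})$ and $\beta_k(s_{t\,k})$ reduce to $\overline{\beta}_k(a_{k'-1\,k+1})$ and $\overline{\beta}_k(a_{t\,k})$, which is consistent precisely because the reduction is an algebra map. This reproduces the stated list exactly.

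Finally, to confirm that $\overline{\beta}_k$ preserves the Poisson bracket, I would invoke the standard divisibility argument: for lifts $x,y\in U'_\mathbb{A}$ of $a,b\in\mathcal{P}_n$, commutativity of $\mathcal{P}_n$ forces $xy-yx\in(1-q)U'_\mathbb{A}$, and by definition $\{a,b\}$ is the value at $q=1$ of $(xy-yx)/(1-q)$. Since $\beta_k$ is an $\mathbb{A}$-algebra automorphism it satisfies $\beta_k\!\left((xy-yx)/(1-q)\right)=(\beta_k(x)\beta_k(y)-\beta_k(y)\beta_k(x))/(1-q)$, and reducing at $q=1$ gives $\{\overline{\beta}_k(a),\overline{\beta}_k(b)\}=\overline{\beta}_k(\{a,b\})$. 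The main obstacle here is not conceptual but one of bookkeeping: checking that the $q\to1$ limit of every case of Theorem \ref{betas} — in particular the sums over $t$ and the recursively defined images — matches the claimed Poisson formula term by term, making sure that exactly the $O(q-1)$ coefficients vanish and that no surviving term is lost.
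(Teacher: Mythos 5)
Your proposal is correct and follows essentially the same route as the paper: the paper's proof is precisely that the corollary "follows immediately from Theorem \ref{betas}," i.e., by specializing each formula at $q=1$ through the $\mathbb{A}$-form isomorphism (\ref{S-PIso}), which is what you carry out. Your additional verifications (that $\beta_k$ and $\beta_k^{-1}$ preserve $U'_{\mathbb{A}}$, and the divisibility argument showing the induced maps preserve the Poisson bracket) are details the paper leaves implicit, and they are sound.
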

  \begin{proof}
    The corollary follows immediately from Theorem \ref{betas}.
  \end{proof}
  \begin{remark}
     Due to the complexity of Poisson bracket, it is challenging to directly obtain specific relations through the braid group $\mathcal{B}_n$ actions on $a_{i+1\,i}$.
      As an application of the braid group actions on the coideal subalgebra, we derive the specific action on the generators $a_{i\,j}$ of Poisson algebra $\mathcal{P}_n$. The difficulty of the problem arises from the need to consider the limit $q\to 1$, which requires to kill $1/(q-q^{-1})$ using the relations of $s_{i\,j}$. 
  \end{remark}
\section*{Acknowledgments}
H. Zhang is partially supported by the support of the National Natural Science Foundation of China 12271332 and Natural Science Foundation of Shanghai 22ZR1424600.

\end{document}